\documentclass[11pt]{article}

\usepackage[latin1]{inputenc}
\usepackage{epsfig}
\usepackage{graphicx,psfrag}
\usepackage[tight]{subfigure} 

%RS \usepackage{refcheck}

\usepackage{amsfonts,amsthm,bbm,amssymb,epsf,amsmath,%
latexsym,amscd,amsfonts,enumerate,amsthm,supertabular,color,url}

\addtolength{\voffset}{-2cm}
\addtolength{\textheight}{3cm}
\addtolength{\hoffset}{-1cm}
\addtolength{\textwidth}{2cm}

\newcommand{\RR}{\mathbb{R}}

\DeclareMathOperator{\diag}{diag}

\newtheorem{theorem}{Theorem}
\newtheorem{lemma}[theorem]{Lemma}
\newtheorem{corollary}[theorem]{Corollary}
\newtheorem{proposition}[theorem]{Proposition}
\newtheorem{definition}[theorem]{Definition}

\newtheorem{remark}[theorem]{Remark}

\newcommand{\ZZ}{\mathbb{Z}}
\newcommand{\NN}{\mathbb{N}}

\def\Im{{\rm Im}\;\!}

\def\c{{\bf c}}

\def\bA{{\bf A}}
\def\b{{\bf b}}
\def\d{{\bf d}}

\def\z{{\bf z}}
\def\w{{\bf w}}

\def\R{{\bf R}}

\def\x{{\bf x}}

\def\y{{\bf y}}

\def\u{{\bf u}}
\def\X{{\bf X}}
\def\Y{{\bf Y}}
\def\bv{{\bf v}}
\def\oo{{\bf 0}}
\def\bomega{{\boldsymbol\omega}}
\def\btheta{{\boldsymbol\theta}}

\def\cL{\mathcal{L}}

\newcommand{\dnorm}{\|\hspace{0.5pt}{\cdot}\hspace{0.5pt}\|}

\def\bdelta{{\boldsymbol\delta}}

\newcommand{\red}[1]{{\color{red} #1 }}    %
\renewcommand{\red}[1]{#1}

\begin{document}

\title{Minimal Numerical Differentiation Formulas}%

\author{Oleg Davydov\thanks{Department of Mathematics, University of Giessen, Arndtstrasse 2, 
35392 Giessen, Germany, \tt{oleg.davydov@math.uni-giessen.de}}
\and Robert Schaback\thanks{Institut für Numerische und Angewandte Mathematik,
Universit\"at G\"ottingen, Lotzestra\ss{}e 16--18, 37083 G\"ottingen, Germany,
{\tt schaback@math.uni-goettingen.de}}}%

\maketitle

\begin{abstract}
We investigate numerical differentiation formulas on irregular centers in two or more variables that are exact for polynomials of a given order
and minimize an absolute seminorm of the weight vector. Error bounds are given in terms of a growth function that carries the information about
the geometry of the centers. Specific forms of weighted $\ell_1$ and weighted least squares minimization are proposed that produce numerical 
differentiation formulas with particularly good performance in numerical experiments. 
\red{The results are of interest in particular for meshless generalized finite difference methods as they provide a consistency error analysis 
for such methods.}
\end{abstract}

\section{Introduction}
We consider a 
linear differential operator $D$ of order $k$ in $d$ real variables in the notation
\begin{equation}\label{Dop}  
Df=\sum_{\alpha\in\mathbb{Z}_+^d\atop|\alpha|\le k}c_\alpha\partial^\alpha f,
\qquad
\partial^\alpha:=\frac{\partial^{|\alpha|}}{\partial \x^{\alpha}}=
\frac{\partial^{|\alpha|}}{\partial x_1^{\alpha_1}\cdots\partial x_d^{\alpha_d}},
\qquad|\alpha|=\alpha_1+\cdots+\alpha_d,
\end{equation}
where $c_\alpha$ are real functions of the independent variable.
To simplify notation, we assume that $d\ge1$ and $k\ge0$ are fixed for 
the entire paper, and do not
indicate
the dependence
of various quantities on these two parameters.

Given an operator $D$, a point $\z\in\RR^d$  such that
$%\begin{equation}\label{Dopu} 
\sum_{|\alpha|= k}|c_\alpha(\z)|\ne 0
$, %\end{equation}
and a finite set $\X=\{\x_1,\ldots,\x_N\}\subset\RR^d$, we  consider 
numerical differentiation formulas  
\begin{equation}\label{ndif} 
D f(\z)\approx\sum_{j=1}^{N} w_j\, f( \x_j )
\end{equation}
that allow to evaluate $D f(\z)$ approximately using 
only  function values at the centers in $\X$. Any formula of type \eqref{ndif} is defined by its 
weight vector $\w=[w_1,\ldots,w_N]^T$ that depends on $D$, $\z$ and $\X$.

Numerical differentiation formulas can be used in particular in meshless 
\emph{generalized finite difference methods}
for partial differential equations that differ from the classical finite difference method in
that they replace finite differences on grids by numerical
differentiation formulas \eqref{ndif} on subsets of a given set of centers that discretize a
spatial domain (see for example %Chapter~5 of the book \cite{FFprimer15,Ostermann_et_al13}). 
\cite{FFprimer15,Ostermann_et_al13}).
In these methods $N$ is usually kept small, and 
a refinement of the solution requires increasing the density of the centers and hence reducing the
distances $\|\z-\x_j\|_2$. The accuracy of the approximation \eqref{ndif} plays a
crucial role for the convergence of such methods. It has been thoroughly investigated under the name of
\emph{local discretization error} or \emph{consistency error} for the functions of one variable (with applications
to numerical methods for ordinary differential equations) and for many variables in the case when the
centers $\z,\x_1,\ldots,\x_N$ are placed on a grid. 
However, little is known about the error of the numerical differentiation formulas
when the centers are irregularly distributed in $\RR^d$, $d\ge2$, the
situation of particular interest for the meshless methods, 
\red{since these often employ the use of a (random) subset of neighboring
  points to attain system sparsity}.

In this paper  we are interested in the error bounds of the form
\begin{equation}\label{consist}
\Big|D f(\z)-\sum_{j=1}^{N} w_j\, f( \x_j )\Big|\le \sigma(\z,\X)\phi(h_{\z,\X})\|f\|_F,\quad f\in F,
\end{equation}
where
\begin{equation}\label{hzXdef}
h_{\z,\X}:=\displaystyle{  \max_{\x\in\X}\|\z-\x\|_2},
\end{equation}
$F$ is a space of functions with a (semi)norm $\dnorm_F$, $\sigma(\z,\X)$ depends on the geometry of 
$\X$ and its position with respect to $\z$, but not on $h_{\z,\X}$, and $\phi(h_{\z,\X})$ stands for the local approximation order, where
$\phi(t)\to0$ as $t\to+0$. If $\phi(h_{\z,\X})=\mathcal{O}(h_{\z,\X}^\mu)$ for some $\mu>0$, then 
\eqref{ndif} has \emph{consistency order $\mu$} in the usual sense of the error analysis of the 
numerical methods as treated for example in \cite{Stetter73}.
Together with the computable estimates of a stability constant of the system matrix
suggested in \cite{Schaback_error_analysis17}, these bounds can be used for the practical
error analysis of generalized finite difference methods.

Given a point set $\X=\{\x_1,\ldots,\x_N\}$, a numerical differentiation formula 
\eqref{ndif} can be generated by
requiring exactness for polynomials of certain  order. 
We denote by $\Pi^d_{q}$ the space of all $d$-variate polynomials of order 
at most $q$, i.e.\ of total degree less than $q$, and $\Pi^d_{0}:=\{0\}$.  %

\begin{definition}\label{polycons}\rm 
Let $D$ be a linear differential operator
of order $k$. A numerical differentiation formula (\ref{ndif}) 
based on a set $\X=\{\x_1,\ldots,\x_N\}\subset\RR^d$ %
is said to be \emph{(polynomially) exact of order $q\ge1$} if
\begin{equation}\label{qexact} 
D p(\z)=\sum_{j=1}^{N} w_j\, p( \x_j ) \quad\hbox{for all}\quad  p\in \Pi^d_{q},
\end{equation}
and \emph{(polynomially) consistent of order $m\ge1$} 
if it is exact of order $q=m+k$. %
\end{definition}

Polynomially exact formulas of order $q$ can be obtained by 
solving the equations \eqref{qexact} with respect to the weights $w_j$.
 The achievable order of polynomial exactness on a set $\X$
is limited by solvability of (\ref{qexact})
and depends crucially on the geometry of $\X$ and the differential operator
$D$. If \eqref{qexact} admits more than one solution, then the remaining degrees of 
freedom can be settled by minimizing a norm of the weight vector $\w=[w_1,\ldots,w_N]^T$.
For example, the minimization of the $\ell_2$-norm $\|\w\|_2:= (\sum_{j=1}^{N} w_j^2)^{1/2}$ results in the weight 
vector obtained by applying $D$ to the least squares polynomial fit to the data, see Section~\ref{ell2}.
Weight vectors that minimize the weighted 
$\ell_1$-norm $\|\w\|_{1,\mu}=\sum_{j=1}^{N} |w_j|\|\x_j-\z\|_2^{\mu}$  and satisfy a positivity condition 
were considered in \cite{Seibold08},
whereas
\cite{DavySchaback16} gives an error bound for the numerical differentiation formulas that minimize 
$\|\w\|_{1,q}$.

In this paper we develop error bounds for the \emph{$\dnorm$-minimal formulas} whose weights minimize a 
given  absolute seminorm $\dnorm$. These error bounds are
expressed in terms of a \emph{growth function}
$$
\rho_{q,D}(\z,\X,\dnorm):=\sup\{D p(\z):p\in \Pi_q^d,\;\|p|_\X\|^*\leq 1\},
\quad \dnorm^*\;\text{is dual seminorm,}$$
and are based on a duality argument that shows that
$$
\rho_{q,D}(\z,\X,\dnorm)=\inf\Big\{\|\w\|:\w\in\RR^N,\;D p(\z)=\sum_{j=1}^Nw_jp(\x_j)
\text{ for all }p\in\Pi_q^d\Big\}.$$
Special types of growth functions have previously been considered in 
\cite{davydov:2007-1,beatson-et-al:2010-1,DavySchaback16}. In particular, they appear in the error bounds for
the kernel-based numerical differentiation in \cite{DavySchaback16}.

Special attention is given to  $\dnorm_{1,\mu}$- and $\dnorm_{2,\mu}$-minimal formulas, where
$\|\w\|_{2,\mu}:=(\sum_{j=1}^{N} w_j^2\|\x_j-\z\|_2^{2\mu})^{1/2}$. The $\dnorm_{1,\mu}$-minimal formulas are 
sparse but their computation requires linear programming whereas the $\dnorm_{2,\mu}$-minimal formulas are 
cheaper to compute thanks to their relation to the weighted least squares. 
Theoretical estimates and numerical experiments reveal a lot of similarity in the degree of accuracy and stability of both 
types of formulas for the same $\mu$, which makes it possible to access the quality  of a  $\dnorm_{1,\mu}$-minimal 
formula if the weights of the respective $\dnorm_{2,\mu}$-minimal formula are known, see Remark~\ref{rho2}. The experiments suggest that the
preferable choice of the exponent $\mu$ is $\mu\approx q$, where $q$ is the exactness order, as it provides formulas with almost optimal accuracy and
and good stability even on centers with difficult geometry. 
Explicit error bounds are obtained for the positive $\dnorm_{1,\mu}$-minimal formulas of \cite{Seibold08}.

We refer the reader to our paper \cite{DavySchabackOPT} for a treatment of the \emph{polyharmonic} formulas that 
minimize the error of a polynomially consistent formula in a Beppo-Levi space and share 
with the $\dnorm$-minimal formulas the computationally advantageous property of \emph{scalability}, see Remark~\ref{scalable} and
Section~\ref{compute}.

The paper is organized as follows. In Section 2 we develop error bounds of the type \eqref{consist} for arbitrary polynomially consistent
formulas, with $f$ in a Hölder space 
$C^{r,\gamma}(\Omega)$
or  a Sobolev space $W^r_\infty(\Omega)$. Section 3 is devoted to the duality theory of the growth functions
and error bounds for the $\dnorm$-minimal formulas, Section 4 to the $\dnorm_{1,\mu}$-minimal and Section 5 to the $\dnorm_{2,\mu}$-minimal 
formulas. Numerical experiments are  presented in Section 6 \red{and some concluding remarks on possible applications in Section 7}.

\section{Error of Polynomially Consistent Formulas}\label{error}
We start with a basic error bound generalizing \cite[Theorem 10]{DavySchaback16}.
For any domain $\Omega\subset\RR^d$, 
$r\in\mathbb{Z}_+$ and $\gamma\in(0,1]$, let $C^{r,\gamma}(\Omega)$ denote
the Hölder space consisting of all $r$ times continuously differentiable functions $f$ on $\Omega$
 such that $|\partial^\alpha f|_{\gamma,\Omega}<\infty$ for all 
$\alpha\in\mathbb{Z}_+^d$ with $|\alpha|=r$, where
$$
|f|_{\gamma,\Omega}:=\sup_{\x,\y\in\Omega\atop \x\ne\y}\frac{|f(\x)-f(\y)|}{\|\x-\y\|_2^\gamma}$$
is a semi-norm on $C^{0,\gamma}(\Omega)$. For $r\ge1$ we use a nonstandard semi-norm
\begin{equation}\label{Hoe}
|f|_{r,\gamma,\Omega}:=\frac{1}{(\gamma+1)\cdots (\gamma+r)}
\Big(\sum_{|\alpha|=r}{r\choose \alpha}|\partial^\alpha f|_{\gamma,\Omega}^2\Big)^{1/2},
\quad f\in C^{r,\gamma}(\Omega).
\end{equation}
Note that \eqref{Hoe} is equivalent to 
the usual seminorm $|f|_{C^{r,\gamma}(\Omega)}:=\max_{|\alpha|=r}|\partial^\alpha f|_{\gamma,\Omega}$ since
$$
\frac{1}{(r+1)!}\max_{|\alpha|=r}|\partial^\alpha f|_{\gamma,\Omega}\le|f|_{r,\gamma,\Omega}<\frac{d^{r/2}}{r!}\max_{|\alpha|=r}|\partial^\alpha f|_{\gamma,\Omega}.$$
In the case $\gamma=1$ the space $C^{r,1}(\Omega)$ can be identified with the Sobolev space
$W^{r+1}_\infty(\Omega)$ with the semi-norm 
$$
|f|_{W^{r+1}_\infty(\Omega)}:=\max_{|\alpha|=r+1}\|\partial^\alpha f\|_{L^\infty(\Omega)}$$
if $\Omega$ satisfies a strong local Lipschitz condition, see for example \cite[Lemma 4.28]{Adams03}.
If $\Omega$ is a convex domain, it can be easily verified that
\begin{equation}\label{Cr}
|f|_{r,1,\Omega}\le |f|_{\infty,r+1,\Omega}:=\frac{1}{(r+1)!}\Big(\sum_{|\alpha|=r+1}{r+1\choose \alpha}
\|\partial^\alpha f\|_{L^\infty(\Omega)}^2\Big)^{1/2}
\end{equation}
for any $f\in W^{r+1}_\infty(\Omega)$.

Given $\z\in\RR^d$ and a function $f\in C^{s-1}(\Omega)$ for some domain $\Omega\subset\RR^d$ containing $\z$, 
we denote by $T_{s,\z}f$ the  Taylor polynomial of order $s\ge1$  centered at $\z$,
\begin{equation*}%
T_{s,\z}f(\x)=\sum_{|\alpha|<s}\frac{(\x-\z)^\alpha}{\alpha!}\partial^\alpha f(\z).
\end{equation*}
For any $\z\in\RR^d$ and $\X=\{\x_1,\ldots,\x_N\}\subset\RR^d$ we set
\begin{equation}\label{SzX}
S_{\z,\X}:=\bigcup_{i=1}^N[\z,\x_i],
\end{equation}
where $[\x,\y]$ denotes the segment
$\{\alpha\x+(1-\alpha)\y:
0\le\alpha\le 1\}$.

\begin{lemma}\label{Taylor} 
Assume that $f\in C^{r,\gamma}(\Omega)$, $r\ge0$, $\gamma\in(0,1]$. Then for any $\z,\x\in\Omega$ such that 
$[\z,\x]\subset\Omega$,
$$%
|f(\x)-T_{r+1,\z}f(\x)|\le \|\x-\z\|_2^{r+\gamma}|f|_{r,\gamma,\Omega}.
$$%
\end{lemma}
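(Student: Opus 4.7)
The plan is to reduce the multivariate estimate to a one-dimensional Taylor remainder along the segment $[\z,\x]$ by setting $g(t):=f(\z+t(\x-\z))$ for $t\in[0,1]$; since $[\z,\x]\subset\Omega$, the chain rule gives $g\in C^{r}([0,1])$ with
$$
g^{(j)}(0)=\sum_{|\alpha|=j}\binom{j}{\alpha}(\x-\z)^\alpha\partial^\alpha f(\z),\qquad 0\le j\le r,
$$
and a check that $\sum_{j=0}^r g^{(j)}(0)/j!=T_{r+1,\z}f(\x)$ via $\binom{j}{\alpha}/j!=1/\alpha!$. The case $r=0$ is nothing but the definition of $|f|_{\gamma,\Omega}$ applied to the endpoints $\z,\x$, so from here on I assume $r\ge1$.

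Next I would invoke the standard Taylor formula with integral remainder for $g\in C^r$:
$$
g(1)=\sum_{j=0}^{r-1}\frac{g^{(j)}(0)}{j!}+\int_0^1\frac{(1-t)^{r-1}}{(r-1)!}g^{(r)}(t)\,dt,
$$
and then split $g^{(r)}(t)=g^{(r)}(0)+[g^{(r)}(t)-g^{(r)}(0)]$. Since $\int_0^1(1-t)^{r-1}/(r-1)!\,dt=1/r!$, the $g^{(r)}(0)$ contribution supplies the missing $j=r$ term of the Taylor polynomial, leaving
$$
f(\x)-T_{r+1,\z}f(\x)=\int_0^1\frac{(1-t)^{r-1}}{(r-1)!}\bigl[g^{(r)}(t)-g^{(r)}(0)\bigr]dt.
$$

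To estimate $|g^{(r)}(t)-g^{(r)}(0)|$ I would apply the triangle inequality to the chain-rule expression for $g^{(r)}$ and use the Hölder property of each $\partial^\alpha f$ along the segment, which yields
$$
|g^{(r)}(t)-g^{(r)}(0)|\le t^\gamma\|\x-\z\|_2^\gamma\sum_{|\alpha|=r}\binom{r}{\alpha}|(\x-\z)^\alpha|\,|\partial^\alpha f|_{\gamma,\Omega}.
$$
The Cauchy--Schwarz inequality with weights $\binom{r}{\alpha}$ together with the multinomial identity $\sum_{|\alpha|=r}\binom{r}{\alpha}(\x-\z)^{2\alpha}=\|\x-\z\|_2^{2r}$ bounds the sum by $\|\x-\z\|_2^r\bigl(\sum_{|\alpha|=r}\binom{r}{\alpha}|\partial^\alpha f|_{\gamma,\Omega}^2\bigr)^{1/2}=(\gamma+1)\cdots(\gamma+r)\,\|\x-\z\|_2^r|f|_{r,\gamma,\Omega}$.

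Substituting back and recognizing the Euler beta integral
$$
\int_0^1\frac{(1-t)^{r-1}}{(r-1)!}t^\gamma\,dt=\frac{B(\gamma+1,r)}{(r-1)!}=\frac{1}{(\gamma+1)(\gamma+2)\cdots(\gamma+r)}
$$
produces precisely the reciprocal of the constant appearing above, so the two factors cancel and the bound $\|\x-\z\|_2^{r+\gamma}|f|_{r,\gamma,\Omega}$ follows. The only delicate point is the matching of constants: the unusual normalization of $|f|_{r,\gamma,\Omega}$ (the $1/(\gamma{+}1)\cdots(\gamma{+}r)$ prefactor together with the binomially weighted $\ell_2$ sum) is engineered precisely so that the Cauchy--Schwarz/multinomial step and the beta integral combine to give constant $1$; verifying this cancellation is the main (though routine) bookkeeping obstacle.
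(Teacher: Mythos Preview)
Your proof is correct and follows essentially the same approach as the paper. The paper starts directly from the multivariate integral remainder formula $R_{r,\z}f(\x)=r\sum_{|\alpha|=r}\frac{(\x-\z)^\alpha}{\alpha!}\int_0^1(1-t)^{r-1}\partial^\alpha f(\z+t(\x-\z))\,dt$ and subtracts the degree-$r$ term, whereas you obtain the same expression via the one-variable Taylor formula for $g(t)=f(\z+t(\x-\z))$; after that, the H\"older estimate, the Cauchy--Schwarz step with the multinomial identity, and the beta integral are identical in both arguments (your weights $\binom{r}{\alpha}$ are the paper's weights $1/\alpha!$ times the common factor $r!$).
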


\begin{proof} 
By a well-known remainder formula, for any $f\in C^{r}(\Omega)$ and $\x\in\Omega$,
\begin{equation*}%
R_{r,\z}f(\x):=f(\x)-T_{r,\z}f(\x)=r\sum_{|\alpha|=r}\frac{(\x-\z)^\alpha}{\alpha!}
\int_0^1(1-t)^{r-1}\partial^\alpha f(\z+t(\x-\z))\,dt.
\end{equation*}
Hence
\begin{equation}\label{Te}
R_{r+1,\z}f(\x)=r\sum_{|\alpha|=r}\frac{(\x-\z)^\alpha}{\alpha!}
\int_0^1(1-t)^{r-1}[\partial^\alpha f(\z+t(\x-\z))-\partial^\alpha f(\z)]\,dt,
\end{equation}
and we have for $f\in C^{r,\gamma}(\Omega)$,
\begin{align*}%
|R_{r+1,\z}f(\x)| &\le r\sum_{|\alpha|=r}\frac{|(\x-\z)^\alpha|}{\alpha!}
\int_0^1(1-t)^{r-1}t^\gamma\|\x-\z\|_2^{\gamma}|\partial^\alpha f|_{\gamma,\Omega}\,dt \\
&=\frac{r!}{(\gamma+1)\cdots (\gamma+r)}\|\x-\z\|_2^{\gamma}
\sum_{|\alpha|=r}\frac{|(\x-\z)^\alpha|}{\alpha!}|\partial^\alpha f|_{\gamma,\Omega}.
\end{align*}
By using the identity 
\begin{equation*}%
\sum_{|\alpha|=r}\frac{(\x-\z)^{2\alpha}}{\alpha!}=\frac{1}{r!}\|\x-\z\|_2^{2r}
\end{equation*}
that follows from the multinomial theorem, and applying Cauchy-Schwarz inequality,
we obtain
\begin{align*}
\Big(\sum_{|\alpha|=r}\frac{|(\x-\z)^\alpha|}{\alpha!}|\partial^\alpha f|_{\gamma,\Omega}\Big)^2
&\le 
\frac{1}{r!}\|\x-\z\|_2^{2r}
\sum_{|\alpha|=r}\frac{|\partial^\alpha f|_{\gamma,\Omega}^2}{\alpha!},
\end{align*}
and the statement follows.
\end{proof}

\begin{proposition}\label{Thepb} Any differentiation formula (\ref{ndif}), which is
exact of order $q>k$ for a linear differential operator $D$ of order $k$, has an error bound 
\begin{equation}\label{peb}
|D f(\z)-\sum_{j=1}^{N} w_j\, f( \x_j )|\le
|f|_{q-1,\gamma,\Omega}
\sum_{j=1}^{N} |w_j|\|\x_j-\z\|_2^{q-1+\gamma}
\end{equation}
for all $f\in C^{q-1,\gamma}(\Omega)$, $\gamma\in(0,1]$, where $\Omega\subset\RR^d$ is any domain that 
contains the set $S_{\z,\X}$.  
\end{proposition}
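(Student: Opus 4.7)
My plan is to reduce everything to Lemma \ref{Taylor} via the Taylor polynomial at $\z$, using polynomial exactness and the order assumption $k < q$.

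First I would set $p := T_{q,\z}f$, which belongs to $\Pi_q^d$. Since the formula is exact of order $q$, one has
$$
D p(\z)=\sum_{j=1}^{N} w_j\, p(\x_j).
$$
Next I would argue that $Df(\z)=Dp(\z)$. The remainder $R_{q,\z}f=f-T_{q,\z}f$ satisfies $\partial^\alpha R_{q,\z}f(\z)=0$ for all $|\alpha|\le q-1$ by the definition of the Taylor polynomial, and since $D$ has order $k\le q-1$ we obtain $DR_{q,\z}f(\z)=0$. This is the only place where the hypothesis $q>k$ is used, and it is the only genuinely non-cosmetic step in the proof.

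Subtracting the two displays above then gives
$$
Df(\z)-\sum_{j=1}^{N} w_j\, f(\x_j)=\sum_{j=1}^{N} w_j\bigl(p(\x_j)-f(\x_j)\bigr)=-\sum_{j=1}^{N} w_j\, R_{q,\z}f(\x_j).
$$
Now I would apply Lemma \ref{Taylor} with $r=q-1$ to each term $R_{q,\z}f(\x_j)$: the segment $[\z,\x_j]$ lies in $S_{\z,\X}\subset\Omega$ by the definitions \eqref{SzX} and the hypothesis on $\Omega$, so
$$
|R_{q,\z}f(\x_j)|\le \|\x_j-\z\|_2^{q-1+\gamma}\,|f|_{q-1,\gamma,\Omega}.
$$

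Finally, the triangle inequality
$$
\Big|Df(\z)-\sum_{j=1}^{N} w_j\, f(\x_j)\Big|\le\sum_{j=1}^{N} |w_j|\,|R_{q,\z}f(\x_j)|
$$
together with the previous bound yields the claim. I do not anticipate any real obstacle: the whole argument is a standard Taylor-remainder-against-a-linear-functional estimate, and the lemma already packages the delicate constant into $|\cdot|_{q-1,\gamma,\Omega}$. The only point worth stating carefully is the vanishing of $DR_{q,\z}f(\z)$, since it is here that the strict inequality $q>k$ (equivalently $q\ge k+1$) enters.
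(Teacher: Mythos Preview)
Your proof is correct and follows exactly the same route as the paper's own argument: write the error as $-\sum_j w_j R_{q,\z}f(\x_j)$ using exactness and the vanishing of $DR_{q,\z}f(\z)$, then apply Lemma~\ref{Taylor} termwise. The paper compresses all of this into two lines, but your unpacking of the step $DR_{q,\z}f(\z)=0$ and the role of the hypothesis $q>k$ matches the paper's ``the fact that $DR(\z)=0$ for any differential operator $D$ of order less than $q$'' precisely.
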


\begin{proof} %
Using the exactness of the formula and the fact that $Df(\z)=D T_{q,\z}(\z)$ for any differential operator $D$ of order less than $q$, we 
obtain
\begin{align}\label{rem}
D f(\z)-\sum_{j=1}^{N} w_j\, f( \x_j )
&=-\sum_{j=1}^{N} w_jR_{q,\z}f( \x_j ),
\end{align}
and \eqref{peb} follows from  Lemma~\ref{Taylor}.
\end{proof}

By using \eqref{Cr}, we obtain the following formulation  for functions in $W^{q}_\infty(\Omega)$. 

\begin{proposition}\label{Copb} Any differentiation formula (\ref{ndif}), which is
exact of order $q>k$ for a linear differential operator $D$ of order $k$, has an error bound 
\begin{equation}\label{Cpeb}
|D f(\z)-\sum_{j=1}^{N} w_j\, f( \x_j )|\le
|f|_{\infty,q,\Omega}
\sum_{j=1}^{N} |w_j|\|\x_j-\z\|_2^{q}
\end{equation}
for all $f\in W^{q}_\infty(\Omega)$, where $\Omega\subset\RR^d$ is any domain that 
contains the convex hull of $\{\z,\x_1,\ldots,\x_N\}$.  
\end{proposition}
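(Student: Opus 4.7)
The plan is to reduce Proposition~\ref{Copb} to Proposition~\ref{Thepb} by taking $\gamma=1$ and invoking the comparison \eqref{Cr} between the H\"older seminorm $|\cdot|_{q-1,1,\cdot}$ and the Sobolev seminorm $|\cdot|_{\infty,q,\cdot}$. The exponent in \eqref{peb} with $\gamma=1$ becomes $q-1+1=q$, matching \eqref{Cpeb} exactly, so the only real task is to move from the H\"older seminorm to the Sobolev seminorm without losing the required containment.

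The subtlety is that \eqref{Cr} needs a convex domain, whereas the $\Omega$ in Proposition~\ref{Copb} is only assumed to contain the convex hull of $\{\z,\x_1,\ldots,\x_N\}$. To handle this I would introduce the auxiliary convex domain $\Omega_0 := \mathrm{conv}\{\z,\x_1,\ldots,\x_N\}$ (or a slightly larger open convex set still contained in $\Omega$, if one wishes to stay in the setting of open domains). Since every segment $[\z,\x_j]$ lies in $\Omega_0$, we have $S_{\z,\X}\subset\Omega_0\subset\Omega$, so Proposition~\ref{Thepb} applies on $\Omega_0$ with $\gamma=1$ and gives
\[
\Big|D f(\z)-\sum_{j=1}^{N} w_j\, f( \x_j )\Big|\le
|f|_{q-1,1,\Omega_0}
\sum_{j=1}^{N} |w_j|\|\x_j-\z\|_2^{q}.
\]

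Next I would apply \eqref{Cr} on the convex domain $\Omega_0$ to obtain $|f|_{q-1,1,\Omega_0}\le |f|_{\infty,q,\Omega_0}$. Because $\Omega_0\subset\Omega$, the $L^\infty$ norms of the partial derivatives $\partial^\alpha f$ over $\Omega_0$ are bounded by those over $\Omega$, so $|f|_{\infty,q,\Omega_0}\le |f|_{\infty,q,\Omega}$. Chaining these two inequalities with the bound above yields exactly \eqref{Cpeb}. One should note that $f\in W^q_\infty(\Omega)$ restricts to an element of $W^q_\infty(\Omega_0)$, and since $\Omega_0$ is convex (hence satisfies a strong local Lipschitz condition) the identification $W^q_\infty(\Omega_0)=C^{q-1,1}(\Omega_0)$ recalled before \eqref{Cr} ensures $f\in C^{q-1,1}(\Omega_0)$ so that Proposition~\ref{Thepb} is legitimately applicable.

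There is really no main obstacle here: the argument is a two-line consequence of Proposition~\ref{Thepb} and \eqref{Cr}. The only point requiring a moment of care is precisely the convexity issue just discussed, which is resolved by passing through the convex hull $\Omega_0$ and using monotonicity of the seminorms under domain inclusion.
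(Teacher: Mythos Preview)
Your proof is correct and follows exactly the route the paper takes: derive \eqref{Cpeb} from Proposition~\ref{Thepb} with $\gamma=1$ together with the comparison \eqref{Cr}. Your extra care in passing through the convex hull $\Omega_0$ to justify applying \eqref{Cr} (which is stated for convex domains) is a welcome clarification that the paper leaves implicit.
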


Note that \cite[Theorem~10]{DavySchaback16} gives the same estimate \eqref{Cpeb}
under the
stronger assumption that 
$f\in C^{q}(\Omega)$. 

\begin{remark}\label{remTe}\rm
The expression of the error of numerical differentiation in the form \eqref{rem} with 
$R_{q,\z}f$ given by \eqref{Te} generalizes \cite[Theorem 1]{CR1972} that applies to the weights $w_j$ obtained by
differentiating Lagrange basis polynomials in the case when $\X$ is suitable for interpolation with polynomials in $\Pi^d_q$.
An error bound in a form similar to \eqref{Cpeb}  for the weights generated by polynomial interpolation or
least squares polynomial fits can be found in \cite{CSV08b}. 
\end{remark}

Looking for an estimate of the type \eqref{consist}, we can for example deduce from \eqref{Cpeb} 
the bound
\begin{equation}\label{Cpebs}
|D f(\z)-\sum_{j=1}^{N} w_j\, f( \x_j )|\le
\|\w\|_1h_{\z,\X}^{q}\,|f|_{\infty,q,\Omega}, \quad f\in W^{q}_\infty(\Omega),
\end{equation}
that holds under the hypotheses of Proposition~\ref{Copb}, where the $\ell_1$-norm of $\w$,
$$
\|\w\|_1=\sum_{j=1}^{N} |w_j|,$$
has an additional interpretation as a \emph{stability constant} of the numerical differentiation formula \eqref{ndif} responsible in
particular for the sensitivity of \eqref{ndif} to the round-off errors because the absolute error in
$\sum_{i=1}^Nw_jf_j$ is bounded by $\|\w\|_1$ times the maximum absolute error in $f_j$.

However, the factor $\|\w\|_1$ in \eqref{Cpebs} also depends on $h_{\z,\X}$ as the following lemma shows.

\begin{lemma}\label{stabestb} Let (\ref{ndif}) be exact of order $q> k$ for a linear differential 
operator $D$ of order $k$. 
There is a constant $C$ depending only on $D$ and $\z$, such that
$$
\|\w\|_1\ge Ch_{\z,\X}^{-k}.$$
\end{lemma}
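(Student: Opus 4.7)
The plan is to apply the exactness hypothesis to a single, carefully chosen test polynomial of degree exactly $k$ centered at $\z$, for which $Dp(\z)$ is a computable nonzero constant depending only on $D$ and $\z$, while $|p(\x_j)|$ is controlled by $h_{\z,\X}^k$.

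Since $q > k$ means $q \ge k+1$, the formula is exact on $\Pi^d_{k+1}$. By the standing assumption $\sum_{|\alpha|=k}|c_\alpha(\z)|\ne 0$, I would pick a multi-index $\alpha^*$ with $|\alpha^*|=k$ and $c_{\alpha^*}(\z)\ne 0$, and set
$$p(\x) := (\x-\z)^{\alpha^*} \in \Pi^d_{k+1}.$$
The polynomial $p$ is homogeneous of degree $k$ in the variable $\x-\z$, so $\partial^\alpha p(\z)=0$ for all $|\alpha|<k$, and for $|\alpha|=k$ we have $\partial^\alpha p(\z)=\alpha^*!$ if $\alpha=\alpha^*$ and zero otherwise. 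Plugging this into the definition \eqref{Dop} of $D$ gives
$$Dp(\z) = c_{\alpha^*}(\z)\,\alpha^*!,$$
which is a nonzero constant depending only on $D$ and $\z$.

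Next I would invoke polynomial exactness \eqref{qexact} for this $p$ and bound the right-hand side pointwise using $|(\x_j-\z)^{\alpha^*}|\le \|\x_j-\z\|_2^k \le h_{\z,\X}^k$:
$$|c_{\alpha^*}(\z)|\,\alpha^*! = \Big|\sum_{j=1}^{N} w_j(\x_j-\z)^{\alpha^*}\Big| \le \|\w\|_1\, h_{\z,\X}^k.$$
Rearranging yields $\|\w\|_1\ge C h_{\z,\X}^{-k}$ with $C:=|c_{\alpha^*}(\z)|\,\alpha^*!$, which depends only on $D$ and $\z$. There is no real obstacle; the only point to be careful about is choosing a polynomial whose lower-order derivatives all vanish at $\z$, so that only the leading symbol of $D$ at $\z$ contributes to $Dp(\z)$, decoupling the estimate from the lower-order coefficients of $D$.
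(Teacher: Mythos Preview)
Your proof is correct and follows essentially the same approach as the paper: choose $\alpha^*$ with $|\alpha^*|=k$ and $c_{\alpha^*}(\z)\ne 0$, apply exactness to the test polynomial $p(\x)=(\x-\z)^{\alpha^*}\in\Pi^d_{k+1}$, and bound $|p(\x_j)|\le h_{\z,\X}^k$ to obtain $\|\w\|_1\ge \alpha^*!\,|c_{\alpha^*}(\z)|\,h_{\z,\X}^{-k}$. Your write-up even makes explicit why the lower-order terms of $D$ do not contribute to $Dp(\z)$, which the paper leaves implicit.
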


\begin{proof}
Since $\sum_{|\alpha|= k}|a_\alpha(\z)|\ne0$, there is $\alpha_0$ with $|\alpha_0|= k$ such that
$a_{\alpha_0}(\z)\ne0$. Then for $p(\x)=(\x-\z)^{\alpha_0}$, we have 
$|p(\x_j)|\leq \|\x_j-\z\|_2^k\le h_{\z,\X}^k$, $j=1,\ldots,N$, and $Dp(\z)=\alpha_0! a_{\alpha_0}(\z)$, where
$p\in\Pi^d_{k+1}$. Since (\ref{ndif}) is exact for $p$, it follows that
$$
|D p(\z)|=\Big|\sum_{j=1}^{N} w_j\, p( \x_j )\Big|\le \|\w\|_1\max_{j=1,\ldots,N}|p(\x_j)|,$$
and hence 
$\|\w\|_1\ge\alpha_0! |a_{\alpha_0}(\z)| h_{\z,\X}^{-k}$.
\end{proof}

This motivates to express the bounds in terms of the quantity $h_{\z,\X}^{k}\|\w\|_1$,
that is the $\ell_1$-norm of the scaled vector $h_{\z,\X}^{k}\w$.
More general, we define $\sigma$-factors of the form
\begin{equation}\label{sig}
\sigma(\z,\X,\w,\mu):=h_{\z,\X}^{k-\mu}\sum_{j=1}^{N} |w_j|\|\x_j-\z\|_2^{\mu},\quad \mu>0,
\quad \sigma(\z,\X,\w,0):=h_{\z,\X}^{k}\|\w\|_1.
\end{equation}
A simple calculation shows that
\begin{equation}\label{sigs}
\sigma(\z,\X,\w,\mu)\le \sigma(\z,\X,\w,\nu),\quad 0\le\nu<\mu.
\end{equation}

An important feature of the expressions $\sigma(\z,\X,\w,\mu)$, $\mu\ge0$, is their 
\emph{scale-invariance} in the sense that
$$ %\begin{equation}\label{siginv}
\sigma(\z,\X,\w,\mu)=\sigma(\z,\X^h,\w^h,\mu)\quad\text{for any}\; h>0,
$$ %\end{equation}
where $\X^h=\{\x^h_j:j=1,\ldots,N\}$ and $\w^h=[w^h_1,\ldots,w^h_N]^T$ with 
\begin{equation}\label{Xhwh}
\x^h_j:=\z+h(\x_j-\z),\quad w^h_j:=h^{-k}w_j,\quad j=1,\ldots,N.
\end{equation}
The scaling of the weights in \eqref{Xhwh} is standard in the finite difference method and is justified
by the fact that for a homogeneous operator $D$ of order $k$ (that is $a_\alpha=0$
for all $\alpha$ with $|\alpha|<k$ in \eqref{Dop}), the exactness order of all scaled formulas 
\begin{equation}\label{ndifsc}
(D f)(\z)\approx\sum_{j=1}^{N} w^h_j\, f( \x^h_j ),\quad h>0,
\end{equation}
coincides with the exactness order of \eqref{ndif}.

We deduce from Propositions~\ref{Thepb} and \ref{Copb} the following main result of this section.

\begin{theorem}\label{Thw1k}
Assume that the differentiation formula (\ref{ndif})  
for a linear differential operator $D$ of order $k$ is exact of order $q>k$, 
and let $\Omega$ be a domain containing the set $S_{\z,\X}=\bigcup_{i=1}^N[\z,\x_i]$. Then 
 for any $r=k,\ldots,q-1$, $\gamma\in(0,1]$, and $\mu\le r+\gamma$,
\begin{equation}\label{t1kbH}
|D f(\z)-\sum_{j=1}^{N} w_j\, f( \x_j )|\le
\sigma(\z,\X,\w,\mu)\,h_{\z,\X}^{r+\gamma-k}
\,|f|_{r,\gamma,\Omega},\quad f\in C^{r,\gamma}(\Omega).
\end{equation}
If $\Omega$ contains the convex hull of $\{\z,\x_1,\ldots,\x_N\}$ and $\mu\le r+1$, then
\begin{equation}\label{t1kbW}
|D f(\z)-\sum_{j=1}^{N} w_j\, f( \x_j )|\le
\sigma(\z,\X,\w,\mu)\,h_{\z,\X}^{r+1-k}
\,|f|_{\infty,r+1,\Omega},\quad f\in W^{r+1}_\infty(\Omega).
\end{equation}
\end{theorem}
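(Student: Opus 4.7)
The plan is to reduce both claims to straightforward applications of Propositions~\ref{Thepb} and \ref{Copb} at an intermediate exactness order, followed by the monotonicity property (\ref{sigs}) of the $\sigma$-factor.

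First I would observe that, since $\Pi^d_{r+1}\subset\Pi^d_q$ whenever $r+1\le q$, exactness of (\ref{ndif}) of order $q$ automatically implies exactness of every order $r+1$ with $r\in\{k,\ldots,q-1\}$. Moreover $r+1>k$, so the hypothesis of Propositions~\ref{Thepb} and \ref{Copb} is met with $q$ replaced by $r+1$. This is the only use of the assumption that the formula is exact of order $q$, and it is what allows $r$ to vary over the whole range $\{k,\ldots,q-1\}$ rather than just being pinned at $r=q-1$.

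Next I would apply Proposition~\ref{Thepb}, with $q$ there replaced by $r+1$ and the same H\"older exponent $\gamma$, to get
\begin{equation*}
\Big|D f(\z)-\sum_{j=1}^{N} w_j\, f( \x_j )\Big|\le
|f|_{r,\gamma,\Omega}\sum_{j=1}^{N} |w_j|\,\|\x_j-\z\|_2^{r+\gamma},\qquad f\in C^{r,\gamma}(\Omega).
\end{equation*}
By the definition (\ref{sig}) the sum on the right is exactly $h_{\z,\X}^{r+\gamma-k}\,\sigma(\z,\X,\w,r+\gamma)$. Since $\mu\le r+\gamma$, the monotonicity (\ref{sigs}) gives $\sigma(\z,\X,\w,r+\gamma)\le\sigma(\z,\X,\w,\mu)$, and (\ref{t1kbH}) follows. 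The Sobolev bound (\ref{t1kbW}) is obtained by the identical argument, using Proposition~\ref{Copb} in place of Proposition~\ref{Thepb} and replacing $r+\gamma$ by $r+1$ throughout; the hypothesis on $\Omega$ containing the convex hull of $\{\z,\x_1,\ldots,\x_N\}$ is inherited from Proposition~\ref{Copb}.

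There is no substantive obstacle; the result is essentially a bookkeeping consequence of the earlier propositions. The one point that deserves care is the direction of monotonicity in (\ref{sigs}): because $\|\x_j-\z\|_2\le h_{\z,\X}$, raising the exponent $\mu$ decreases $\sigma(\z,\X,\w,\mu)$, so applying Proposition~\ref{Thepb} or \ref{Copb} at the sharpest available exponent produces a quantity that dominates $\sigma$ at any smaller $\mu$, which is precisely the direction needed to yield the stated family of bounds.
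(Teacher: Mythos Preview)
Your proof is correct and follows exactly the route the paper intends: the paper merely states that Theorem~\ref{Thw1k} is deduced from Propositions~\ref{Thepb} and~\ref{Copb}, and your argument supplies precisely those details --- lowering the exactness order to $r+1$, invoking the proposition, rewriting the sum via (\ref{sig}), and then using the monotonicity (\ref{sigs}). One small expository quibble: in your final paragraph the phrase ``produces a quantity that dominates $\sigma$ at any smaller $\mu$'' reads backwards --- the quantity $\sigma(\z,\X,\w,r+\gamma)$ produced by the proposition is \emph{dominated by} $\sigma(\z,\X,\w,\mu)$ for $\mu\le r+\gamma$, not the other way round --- but you had already written the inequality in the correct direction earlier, so this does not affect the argument.
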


The estimates in \eqref{t1kbH} and \eqref{t1kbW} are of the type \eqref{consist}
discussed in
the introduction.
The factors in the right hand side are responsible for three different ingredients of the error:
geometric position of the points ($\sigma$-factor), 
size of the domain of influence ($h$-factor), and a measure of smoothness of the function ($f$-factor).

\section{Minimal Formulas and Growth Functions}\label{gfun}

If the weights $w_j$ are not fully determined by the exactness 
condition \eqref{qexact}, it is natural in view of Theorem~\ref{Thw1k} 
to choose $\w$ that  minimizes the factor $\sigma(\z,\X,\w,\mu)$, that is to
minimize the \emph{weighted $\ell_1$-(semi)norm} 
\begin{equation}\label{wl1norm}
\sum_{j=1}^{N} |w_j|\|\x_j-\z\|_2^{\mu}=:\|\w\|_{1,\mu} 
\end{equation}
of the weight vector $\w=[w_1,\ldots,w_N]^T$, with an appropriate exponent $\mu>0$. 
Clearly, $\dnorm_{1,\mu}$ is just a seminorm in case that $\x_j=\z$ for some $j$.
Note that the
notation $\|\w\|_{1,\mu}$ for the above seminorm hides its dependence on $\z,\X$, which normally does not
cause any confusion. %

Since, however, 
weight vectors minimizing other expressions are also in use, we note that %
\begin{equation}\label{pebpw}
\sum_{j=1}^{N} |w_j|\|\x_j-\z\|_2^{\mu}\le \|\w\|\,\|\bdelta^{\mu}(\z,\X)\|^*,
\end{equation}
where $\dnorm$ is an arbitrary absolute seminorm on $\RR^N$, $\dnorm^*$ its dual seminorm, and 
$\bdelta^\mu(\z,\X)$
denotes the vector with components
\begin{equation}\label{delta}
\delta^\mu_j(\z,\X):=%
\|\x_j-\z\|_2^\mu,\quad j=1,\ldots,N.
\end{equation}
Recall that a (semi)norm $\dnorm$ on $\RR^N$ is said to be \emph{absolute} if it 
depends only on the absolute values of the components of a vector.
The arguments in \cite{BSW61} (originally for norms on $\mathbb{C}^N$) show that any absolute seminorm 
is \emph{monotonic} in the sense that $|w_j|\le |v_j|$, $j=1\ldots,N$, implies $\|\w\|\le\|\bv\|$.
From this it is easy to deduce that the kernel $\mathcal{K}$ of an absolute seminorm $\dnorm$ has the form 
$\{\w\in\mathbb{R}^N:w_j=0,\;j\in J\}$ for some $J\subset\{1,\ldots,N\}$. The
dual
seminorm
$\dnorm^*$ is
the norm on the orthogonal complement $\mathcal{K}^\perp$ of  $\mathcal{K}$ dual to the restriction of
$\dnorm$ to $\mathcal{K}^\perp$, and  $\|\u\|^*=\infty$ for all 
$\u\in\mathbb{R}^n\setminus \mathcal{K}^\perp$. In particular, 
\begin{equation}\label{dwl1norm}
\|\u\|_{1,\mu}^*=\max_{j=1,\ldots,N\atop u_j\ne 0} |u_j|\|\x_j-\z\|_2^{-\mu},
\quad \u\in\mathbb{R}^N,
\end{equation}
and $\|\bdelta^{\mu}(\z,\X)\|_{1,\mu}^*=1$.

If we introduce a basis of polynomials, we see that polynomial consistency
needs weight vectors $\w\in\RR^N$ satisfying a 
possibly underdetermined linear system $\bA\w=\b$ 
in the form (\ref{qexact}) with a $M\times N$ matrix $\bA$. Then one can try to minimize 
$\|\w\|$ for any norm or seminorm on $\RR^N$ under all solutions.

\begin{lemma}\label{Lemdualgen}
The problems
$$
\inf\{\|\w\|\;:\; \w\in\RR^N,\;\bA\w=\b  \} \;\hbox{ and }\;
\sup\{\b^T\x\;:\; \x\in\RR^M,\; \|\bA^T\x\|^*\leq 1\} 
$$
are dual, if $\dnorm^*$ is the dual seminorm to $\dnorm$. In particular,
\begin{enumerate}
 \item $\b^T\x\leq \|\w\|$ holds for admissible vectors $\w$ and all $\x\in\RR^M$
 such that $\|\bA^T\x\|^*\le1$, and hence both problems are solvable
if one is,
\item if both problems are solvable, then optimal values are equal,
\item the second is unbounded if and only if the first has no admissible vector.
\end{enumerate} 
\end{lemma}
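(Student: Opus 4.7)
The plan is to prove the three items in sequence: weak duality (item~1's inequality, plus the ``solvable if one is'' corollary in conjunction with item~3), the infeasibility/unboundedness dichotomy (item~3), and strong duality (item~2). This is the standard minimum-(semi)norm/LP duality, adapted to accommodate the fact that $\dnorm$ is a seminorm with possibly nontrivial kernel $\mathcal{K}$. Weak duality is a direct calculation: for every $\w$ with $\bA\w=\b$ and every $\x\in\RR^M$ with $\|\bA^T\x\|^*\le1$,
$$\b^T\x=(\bA\w)^T\x=\w^T(\bA^T\x)\le\|\w\|\,\|\bA^T\x\|^*\le\|\w\|,$$
using the defining property of the dual seminorm, where dual feasibility forces $\bA^T\x\in\mathcal{K}^\perp$ so the pairing is genuine rather than an $\infty$-artifact. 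This is item~1's inequality, and it shows that whenever the primal is feasible the dual supremum is finite (bounded above by any primal-feasible value $\|\w_0\|$).

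Next, item~3. Primal infeasibility is equivalent to $\b\notin\operatorname{range}\bA=(\ker\bA^T)^\perp$, so some $\x_0\in\ker\bA^T$ has $\b^T\x_0\ne0$; then $t\x_0$ is dual-feasible for every $t\in\RR$ (since $\bA^T(t\x_0)=\mathbf{0}$) while $\b^T(t\x_0)$ is unbounded as $|t|\to\infty$ with the right sign, so the dual is unbounded. The converse is weak duality. Combined with the previous paragraph, this completes the ``both solvable if one is'' clause of item~1, with finite-dimensional compactness upgrading finiteness of the optimal value to attainment.

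For strong duality (item~2), I would fix any primal-feasible $\w_0$ and rewrite the primal as computing $\operatorname{dist}_{\dnorm}(-\w_0,\ker\bA)$. The finite-dimensional Hahn--Banach theorem gives
$$\operatorname{dist}_{\dnorm}(-\w_0,\ker\bA)=\sup\{\u^T(-\w_0):\u\in(\ker\bA)^\perp,\;\|\u\|^*\le1\},$$
and since $(\ker\bA)^\perp=\operatorname{range}\bA^T$, writing $\u=\bA^T\x$ yields $\u^T\w_0=\x^T\b$; absorbing the sign via $\x\mapsto-\x$ makes the right-hand side the dual supremum, so the optimal values agree.

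The main obstacle is the seminorm (rather than norm) character of $\dnorm$: its kernel $\mathcal{K}$ is in general nontrivial, the dual seminorm is $+\infty$ off $\mathcal{K}^\perp$, and the natural feasible sets are unbounded along kernel directions. The remedy is to quotient by $\mathcal{K}$ on the primal side and by $\ker\bA^T$ on the dual side---both quotients leave the respective objectives invariant---reducing everything to a genuine norm setting where Hahn--Banach and standard compactness apply verbatim. The description of $\mathcal{K}$ as $\{\w\in\RR^N:w_j=0,\;j\in J\}$ recalled just before the lemma makes these quotients completely explicit.
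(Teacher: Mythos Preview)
Your proof is correct and follows essentially the same route as the paper's: weak duality by the direct computation $\b^T\x=\w^T\bA^T\x\le\|\w\|\,\|\bA^T\x\|^*$, Fredholm's alternative for item~3, and Hahn--Banach for strong duality. The only difference is in packaging the Hahn--Banach step. The paper defines the linear functional $\lambda(\bA^T\x):=\b^T\x$ on the subspace $\operatorname{Im}\bA^T$ (well-defined precisely because $\b\in\operatorname{Im}\bA$), identifies its operator norm with the dual optimum, and then extends $\lambda$ to all of $\RR^N$ without increasing the norm; the Riesz vector $\w$ of the extension is then an optimal primal solution. You instead translate by a feasible $\w_0$, rewrite the primal as a distance from $-\w_0$ to $\ker\bA$, and invoke the standard distance--duality formula. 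These are two equivalent faces of the same Hahn--Banach argument. The paper also confines its written proof to the case where $\dnorm$ is a norm and merely remarks that the seminorm case ``can be obtained following the same lines''; your explicit quotienting by the kernel $\mathcal{K}$ is therefore slightly more complete on that point.
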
 
\begin{proof}
Although this lemma is a special case of Fenchel's duality theorem, we provide a short
self-contained proof for the case when $\dnorm$ is a norm. (The proof for seminorms can be obtained
following the same lines.)
For any admissible vector $\w$ and any $\x\in\RR^M$ satisfying $\|A^T\x\|^*\le1$, we have
$$%
\b^T\x=(\bA\w)^T\x=\w^T\bA^T\x\le \|\w\|\|A^T\x\|^*\le \|\w\|,
$$ %
which proves the first assertion.

To prove the second assertion, we assume that the first problem is solvable and
thus $\b\in\Im \bA$. Then the formula
$$
\lambda(\bA^T\x):=\b^T\x,\quad \x\in\R^M,$$
defines a linear functional $\lambda$ on the space $\Im \bA^T\subset\RR^N$. 
Indeed, if $\bA^T\x=\bA^T\y$, then $\x-\y\in\ker \bA^T$ and hence 
$\b^T(\x-\y)=0$ by Fredholm's alternative for matrices, which shows that
$\lambda$ is well defined.
If we equip $\Im \bA^T$ with the dual norm $\dnorm^*$, then the norm of  
$\lambda$,
$$
\|\lambda\|=\sup_{\|\bA^T\x\|^*\leq 1}\b^T\x$$
is the optimal value of the second problem.
By the Hahn-Banach theorem the functional $\lambda$ can be extended to $\RR^N$ without
increasing its norm. Hence there exists $\w\in\RR^N$ such that 
$\|\w\|=\|\lambda\|$ and $\w^T\bA^T\x=\b^T\x$ for all $\x\in\RR^M$. The latter
property implies $\bA\w=\b$, which shows that $\w$ is an admissible vector for the
first problem and
$$\|\w\|= \sup_{\|\bA^T\x\|^*\leq 1}\b^T\x.$$

The third assertion follows from Fredholm's alternative
that ensures that for any $\b\notin\Im \bA$ there is 
$\x\in \ker \bA^T$  such that $\b^T\x\ne0$.
\end{proof}

Note that the dual problem in Lemma \ref{Lemdualgen} always has admissible points
but may be unbounded. 

Inspecting the dual problem in our case, we see that the minimization
of the seminorm $\|\w\|$ of a weight vector $\w$ for a 
differentiation formula which is exact on $\Pi_q^d$ is dual to
the maximization of $Dp(\z)$ over all polynomials $p\in \Pi_q^d$
where the vector $p|_\X$ 
of values of $p$ on $\X$ satisfies $\|p|_\X\|^*\leq 1$.    %
This generalizes directly to other linear functionals than $f\mapsto Df(\z)$.
By appropriate interpretation of $\bA$ and $\b$
 of Lemma \ref{Lemdualgen} we get 

\begin{theorem}\label{Thegro}
Given a seminorm $\dnorm$ %
on $\RR^N$, a set $\X\subset\RR^d$ of $N$ points,
and a linear functional $\lambda$ that can be applied to $\Pi_q^d$, the quantity
$$
\rho:=\sup\{\lambda(p):p\in \Pi_q^d,\;\|p|_\X\|^*\leq 1\} 
$$
is finite if and only if there exists an approximation formula
$$
\lambda(f)\approx \sum_{j=1}^Nw_jf(\x_j)
$$
which is exact on $\Pi_q^d$. Then 
$$
\rho=\inf\Big\{\|\w\|:\lambda(p)=\sum_{j=1}^Nw_jp(\x_j)
\text{ for all }p\in\Pi_q^d\Big\}.
$$
Moreover, the inequality
$$
|\lambda(p)|\leq \|\w\|%
$$
holds for all $\w$ that define a $\Pi_q^d$-exact formula and all
$p\in \Pi_q^d$ such that $\|p|_\X\|^*\le1$ and
is sharp for any $p=p^*$ and $\w=\w^*$ satisfying $\|\w^*\|=\rho=|\lambda(p^*)|$.\qed
\end{theorem}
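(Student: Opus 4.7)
The plan is to reduce Theorem~\ref{Thegro} to Lemma~\ref{Lemdualgen} by encoding polynomial exactness as a linear system. First, I would fix a basis $p_1,\ldots,p_M$ of $\Pi_q^d$, set $\bA:=[p_i(\x_j)]\in\RR^{M\times N}$ and $\b:=[\lambda(p_i)]\in\RR^M$. By linearity, a weight vector $\w$ defines a $\Pi_q^d$-exact formula for $\lambda$ if and only if $\bA\w=\b$, so the infimum in the theorem is precisely the primal problem in Lemma~\ref{Lemdualgen}.

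Next, I would recognize the quantity $\rho$ as the corresponding dual problem. Given $\x\in\RR^M$, form $p_\x:=\sum_i x_i p_i\in\Pi_q^d$. A direct computation gives $(\bA^T\x)_j=p_\x(\x_j)$, so $\bA^T\x=p_\x|_\X$ and $\b^T\x=\lambda(p_\x)$. Since $\x\mapsto p_\x$ is a linear bijection onto $\Pi_q^d$,
$$\sup\{\b^T\x:\x\in\RR^M,\;\|\bA^T\x\|^*\le 1\}=\rho.$$
With this dictionary in place, all three assertions of the theorem fall out of Lemma~\ref{Lemdualgen}: part~(3) translates into ``$\rho<\infty$ iff some $\Pi_q^d$-exact formula exists'', and parts~(1)--(2) give equality of the infimum and supremum once this condition holds.

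For the pointwise inequality and sharpness, exactness of $\w$ yields $\lambda(p)=\w^T(p|_\X)$, so for $p\in\Pi_q^d$ with $\|p|_\X\|^*\le 1$ the duality inequality $|\w^T\u|\le\|\w\|\,\|\u\|^*$ immediately gives $|\lambda(p)|\le\|\w\|$. The only point to justify is that this inequality extends to absolute seminorms with nontrivial kernel $\mathcal{K}$: finiteness of $\|p|_\X\|^*$ forces $p|_\X\in\mathcal{K}^\perp$, on which $\dnorm^*$ is the genuine dual of $\dnorm|_{\mathcal{K}^\perp}$, and decomposing $\w=\w_0+\w_1$ along $\mathcal{K}\oplus\mathcal{K}^\perp$ shows that $\w^T(p|_\X)=\w_1^T(p|_\X)$ while $\|\w\|=\|\w_1\|$. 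Sharpness at $(p^*,\w^*)$ with $\|\w^*\|=\rho=|\lambda(p^*)|$ is then tautological. The main obstacle, such as it is, amounts to this careful accounting for the seminorm kernel, but the needed ingredients are already laid out in the discussion preceding Lemma~\ref{Lemdualgen}.
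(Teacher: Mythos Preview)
Your proposal is correct and follows precisely the paper's own route: the paper states Theorem~\ref{Thegro} with a \qed, introducing it with ``By appropriate interpretation of $\bA$ and $\b$ of Lemma~\ref{Lemdualgen} we get'', and your choice of $\bA=[p_i(\x_j)]$, $\b=[\lambda(p_i)]$ via a basis of $\Pi_q^d$ is exactly that interpretation. Your explicit treatment of the seminorm kernel in the pointwise inequality is a welcome elaboration of details the paper leaves implicit.
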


In the case $\lambda(f):=\delta_\z Df=D f(\z)$ we call the constant $\rho$ of the last
theorem the \emph{growth function} and introduce the notation
\begin{equation}\label{gf}
\rho_{q,D}(\z,\X,\dnorm):=\sup\{D p(\z):p\in \Pi_q^d,\;\|p|_\X\|^*\leq 1\}.
\end{equation}
Theorem~\ref{Thegro} implies the following dual form of the growth function 
\begin{equation}\label{dual}
\rho_{q,D}(\z,\X,\dnorm)=\inf\Big\{\|\w\|:\w\in\RR^N,\;D p(\z)=\sum_{j=1}^Nw_jp(\x_j)
\text{ for all }p\in\Pi_q^d\Big\}.
\end{equation}

Note that the growth function was first considered in \cite{davydov:2007-1} for   the norm $\dnorm_1$ 
and identity operator $Df=f$ (in which case it is related to  the norming constant
of \cite{jetter-et-al:1999-1}), and later in \cite{beatson-et-al:2010-1} for partial derivatives and
in \cite{DavySchaback16} for differential operators and the norm $\dnorm_{1,\mu}$. 
In particular, different proofs of the duality \eqref{gf}--\eqref{dual} in special cases were given in 
\cite{davydov:2007-1} and \cite{beatson-et-al:2010-1}.

It is easy to see that the growth function is \emph{monotone with respect to $\X$} in the following sense. 
If $\X'=\X\cup\{\x_{N+1},\ldots,\x_{M}\}$, with $\x_j\notin\X$, $j=N+1,\ldots,M$, then  
any vector $\w\in\RR^N$ can be extended to $\w'\in\RR^M$ by setting $\w'_j=0$, $j=N+1,\ldots,M$.
Assuming that the seminorm $\dnorm'$ on $\RR^M$ is compatible with $\dnorm$ in the sense that $\|\w'\|'=\|\w\|$, we derive 
from \eqref{dual} that
\begin{equation}\label{gmon}
\rho_{q,D}(\z,\X',\dnorm')\le \rho_{q,D}(\z,\X,\dnorm).
\end{equation}

If $\X=\{x_1,\ldots,x_N\}$ is  a \emph{unisolvent set} for $\Pi_q^d$ in the
sense that $p|_\X=0$ implies $p=0$  for any $p\in\Pi_q^d$, then $\X$ contains an
\emph{interpolation set} $\X'=\{x'_1,\ldots,x'_M\}\subset\X$ for $\Pi_q^d$, that is a unisolvent set
such that $M=\dim \Pi_q^d={d+q-1\choose d}$. Then any polynomial 
$p\in\Pi_q^d$ satisfies $p=\sum_{j=1}^M  p(x'_j)\ell_j$, where the \emph{Lagrange
polynomials} $\ell_j\in\Pi_q^d$ are uniquely determined by the conditions
$\ell_i(x'_j)=\delta_{ij}$ (the Kronecker delta). 
Since $Dp(\z)=\sum_{j=1}^M D\ell_j(\z)\, p(x'_j)$, the set in the right hand
side of \eqref{dual} is not empty for any $\z\in\RR^d$, which implies the
following statement.

\begin{proposition}\label{unisolv}
If $\X$  is  a unisolvent set for $\Pi_q^d$, then 
$\rho_{q,D}(\z,\X,\dnorm)<\infty$ for any $D$, $\z$ and $\dnorm$.
\end{proposition}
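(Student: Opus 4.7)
The plan is to invoke the dual form \eqref{dual} of the growth function supplied by Theorem~\ref{Thegro}: $\rho_{q,D}(\z,\X,\dnorm)$ equals the infimum of $\|\w\|$ over all $\w\in\RR^N$ that define a $\Pi_q^d$-exact formula for the functional $p\mapsto Dp(\z)$. Since an absolute seminorm $\dnorm$ on $\RR^N$ is finite on all of $\RR^N$, it is enough to exhibit a single such $\w$; the infimum will then automatically be finite.

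To produce $\w$, I would use the unisolvency hypothesis exactly as in the paragraph preceding the statement. Unisolvency of $\X$ for $\Pi_q^d$ guarantees the existence of an interpolation subset $\X'=\{x'_1,\ldots,x'_M\}\subset\X$ with $M=\dim\Pi_q^d$ and of Lagrange basis polynomials $\ell_j\in\Pi_q^d$ satisfying $\ell_i(x'_j)=\delta_{ij}$. Every $p\in\Pi_q^d$ then admits the representation $p=\sum_{j=1}^M p(x'_j)\ell_j$, so applying $D$ and evaluating at $\z$ yields the identity
$$
Dp(\z)=\sum_{j=1}^M D\ell_j(\z)\,p(x'_j).
$$
Extending $D\ell_j(\z)$ by zeros on $\X\setminus\X'$ gives a weight vector $\w\in\RR^N$ that realizes a $\Pi_q^d$-exact formula.

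Plugging this concrete $\w$ into \eqref{dual} gives $\rho_{q,D}(\z,\X,\dnorm)\le \|\w\|<\infty$, which is the claim. There is essentially no obstacle here: the content of the argument lives entirely in the classical existence of a Lagrange basis on an interpolation set, and the only point that has to be registered explicitly is that $\dnorm$, being an absolute seminorm on the finite-dimensional space $\RR^N$, assigns a finite value to any specific vector we write down, so the extension-by-zero step is harmless.
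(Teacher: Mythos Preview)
Your argument is correct and is essentially identical to the paper's: the proof is contained in the paragraph immediately preceding the proposition, which constructs a $\Pi_q^d$-exact weight vector from the Lagrange basis on an interpolation subset $\X'\subset\X$ and concludes via the dual form \eqref{dual} that the infimum is finite.
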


However, $\X$ does not have to be a unisolvent set for $\Pi_q^d$ in order that
$\rho_{q,D}(\z,\X,\dnorm)$ $<\infty$ for certain points $\z$. An important example is given 
in the case of the Laplace operator 
$\Delta=\sum_{i=1}^d\frac{\partial^2}{\partial x_i^2}$ 
in $\RR^2$ by the \emph{five point star} 
$\X=\{(0,0),(\pm 1,0),(0,\pm 1)\}\subset\RR^2$, where
$\Delta p (0,0)=p(1,0)+p(-1,0)+p(0,1)+p(0,-1)-4p(0,0)$ for all $p\in\Pi^2_3$, which implies
$\rho_{3,\Delta}(0,\X,\dnorm)<\infty$ for any norm $\dnorm$, whereas $\X$ with its 5 points 
is too small to be a unisolvent set for $\Pi^2_3$ of dimension 10.

If $\X$ is an interpolation set for $\Pi_q^d$, then there is a unique differentiation formula of 
exactness order $q$ given by $Df(\z)\approx\sum_{j=1}^N D\ell_j(\z)\, f(x_j)$,
where $\ell_j$, $j=1,\ldots,N$, are the Lagrange functions. In this case the growth function for 
$\dnorm=\dnorm_1$ coincides with the classical \emph{Lebesgue function},
$$
\rho_{q,D}(\z,\X,\dnorm_1)=\cL_{q,D}(\z,\X):=\sum_{j=1}^N |D\ell_j(\z)|.$$ 
If $\X$ is a unisolvent set for $\Pi_q^d$, then the monotonicity \eqref{gmon} of the growth function
implies that
$$
\rho_{q,D}(\z,\X,\dnorm_1)\le \min\big\{\cL_{q,D}(\z,\X'): \X'\subset\X,\;\text{$\X'$ is an interpolation set for $\Pi_q^d$} \big\}.$$

\begin{remark}\rm
  Growth functions
  in the form \eqref{gf} can be estimated
  from 
  above under appropriate assumptions on the set $\X$, such as sufficient density in a domain
satisfying an interior cone condition, see e.g.~\cite[Theorem~3.8]{wendland:2005-1}, or the existence of a
subset with a known bound on the Lebesgue function, for example Padua points \cite{CMV2005}. We will not pursue
such estimates in this paper.
\end{remark}

In what follows we will investigate weight vectors that realize the infimum in \eqref{dual} for
certain types of seminorms $\dnorm$.

\begin{definition}\label{dmin}\rm
Let $\dnorm$ be an seminorm on $\RR^N$.
A differentiation formula (\ref{ndif}) for a linear differential operator $D$ 
of order $k$ is said to be \emph{$\dnorm$-minimal of order $q$} if the weight vector 
$\w^*$ that defines it satisfies
$$
\|\w^*\|=\inf\Big\{\|\w\|:D p(\z)=\sum_{j=1}^Nw_jp(\x_j)
\text{ for all }p\in\Pi_{q}^d\Big\}.$$
Then $\|\w^*\|=\rho_{q,D}(\z,\X,\dnorm)$ by \eqref{dual}.
\end{definition}

From Propositions~\ref{Thepb}, \ref{Copb}, and equations \eqref{pebpw}, 
\eqref{dual} we immediately obtain the following error bounds.

\begin{theorem}\label{gft}
Assume that the differentiation formula (\ref{ndif})  with the weight vector $\w^*$
for a linear differential operator $D$ of order $k$ is $\dnorm$-minimal of order $q>k$ for an absolute 
seminorm $\dnorm$, and let $\Omega$ be a domain containing the set $S_{\z,\X}=\bigcup_{i=1}^N[\z,\x_i]$. Then 
 for any $r=k,\ldots,q-1$, and $\gamma\in(0,1]$, 
$$ %\begin{equation}\label{pebgfH}
|D f(\z)-\sum_{j=1}^{N} w_j^*\, f( \x_j )|\le
\rho_{q,D}(\z,\X,\dnorm)
\,\|\bdelta^{r+\gamma}(\z,\X)\|^*
\,|f|_{r,\gamma,\Omega},\quad f\in C^{r,\gamma}(\Omega),
$$ %\end{equation}
where the vector $\bdelta^{r+\gamma}(\z,\X)$ is defined in \eqref{delta}. If $\Omega$ contains the convex hull of $\{\z,\x_1,\ldots,\x_N\}$, then
$$ %\begin{equation}\label{pebgf}
|D f(\z)-\sum_{j=1}^{N} w_j^*\, f( \x_j )|\le
\rho_{q,D}(\z,\X,\dnorm)
\,\|\bdelta^{r+1}(\z,\X)\|^*
\,|f|_{\infty,r+1,\Omega},\quad f\in W^{r+1}_\infty(\Omega).
$$ %\end{equation}
\end{theorem}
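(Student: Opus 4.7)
The plan is to assemble Theorem~\ref{gft} directly from the three ingredients already on the table: the pointwise Taylor-type bound of Proposition~\ref{Thepb} (resp.\ Proposition~\ref{Copb}), the dual-norm estimate \eqref{pebpw}, and the identification $\|\w^*\|=\rho_{q,D}(\z,\X,\dnorm)$ provided by Definition~\ref{dmin} together with \eqref{dual}. Nothing more delicate than bookkeeping on the exactness order seems to be required.

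First, I would observe that a formula which is exact of order $q$ is automatically exact of every order $q'\le q$: indeed $\Pi_{q'}^d\subset\Pi_q^d$, so \eqref{qexact} for $\Pi_q^d$ trivially restricts to $\Pi_{q'}^d$. Hence, for every $r$ with $k\le r\le q-1$, the hypothesis $r+1>k$ is met and Proposition~\ref{Thepb} applies with exactness order $q'=r+1$. This yields, for $f\in C^{r,\gamma}(\Omega)$,
\begin{equation*}
\Big|Df(\z)-\sum_{j=1}^N w_j^* f(\x_j)\Big|\le |f|_{r,\gamma,\Omega}\sum_{j=1}^N|w_j^*|\,\|\x_j-\z\|_2^{r+\gamma}.
\end{equation*}

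Next I would bound the sum on the right by invoking \eqref{pebpw} with $\mu=r+\gamma$, which is precisely the statement
\begin{equation*}
\sum_{j=1}^N |w_j^*|\,\|\x_j-\z\|_2^{r+\gamma}\le \|\w^*\|\;\|\bdelta^{r+\gamma}(\z,\X)\|^*.
\end{equation*}
Since $\w^*$ is the $\dnorm$-minimal weight vector defining a formula of exactness order $q$, the Definition~\ref{dmin} gives $\|\w^*\|=\rho_{q,D}(\z,\X,\dnorm)$ via \eqref{dual}. Substituting produces exactly the first bound of the theorem.

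The second bound for $f\in W^{r+1}_\infty(\Omega)$ is obtained by the same three-step chain but with Proposition~\ref{Copb} in place of Proposition~\ref{Thepb} (using $q'=r+1$ and exponent $\mu=r+1$ in \eqref{pebpw}); the assumption that $\Omega$ contains the convex hull of $\{\z,\x_1,\ldots,\x_N\}$ is exactly what Proposition~\ref{Copb} demands. There is no genuine obstacle here; if anything, the only point that deserves a moment of care is checking that the seminorm $\dnorm$ being \emph{absolute} is enough to legitimize \eqref{pebpw}, which was established in the paragraph around \eqref{dwl1norm}. Once that is in hand, the theorem is just a concatenation of three inequalities.
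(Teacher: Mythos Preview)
Your proof is correct and follows exactly the paper's approach: the paper's own proof is the single sentence ``From Propositions~\ref{Thepb}, \ref{Copb}, and equations \eqref{pebpw}, \eqref{dual} we immediately obtain the following error bounds,'' which is precisely the three-step chain you spelled out. Your added remark that exactness of order $q$ implies exactness of order $r+1\le q$ is the routine bookkeeping needed to invoke Propositions~\ref{Thepb}/\ref{Copb} with exponent $r+\gamma$ (resp.\ $r+1$), and is implicit in the paper's treatment (cf.\ Theorem~\ref{Thw1k}).
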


Note that similar estimates involving general growth functions $\rho_{q,D}(\z,\X,\dnorm)$ 
can be obtained for the error of the
kernel-based numerical differentiation, generalizing the results in \cite{DavySchaback16}. Indeed,  
\eqref{pebpw} and \eqref{dual} can be applied to the bound given in \cite[Lemma 7]{DavySchaback16}.

For the scaled formulas \eqref{ndifsc} for a homogeneous operator $D$ of order $k$
the error bound (\ref{peb}) takes the form
$$
|D f(\z)-\sum_{j=1}^{N} w^h_j\, f( \x^h_j )|\le
h^{q-1+\gamma-k}|f|_{q-1,\gamma,\Omega^h}
\sum_{j=1}^{N} |w_j|\|\x_j-\z\|_2^{q-1+\gamma},$$
where $\Omega^h$ contains the scaled set $S_{\z,\X^h}=\bigcup_{i=1}^N[\z,\x^h_i]$. 
Hence, for a $\dnorm$-minimal weight vector $\w$ we immediately obtain the following estimates
showing the correct scaling behavior of the error for any $h>0$,
\begin{align}\label{sce}
\begin{split}
|D f(\z)-\sum_{j=1}^{N} w^h_j\, f( \x^h_j )|&\le
h^{r+\gamma-k}\rho_{q,D}(\z,\X,\dnorm)
\,\|\bdelta^{r+\gamma}(\z,\X)\|^*
\,|f|_{r,\gamma,\Omega^h},%
\\
|D f(\z)-\sum_{j=1}^{N} w^h_j\, f( \x^h_j )|&\le
h^{r+1-k}\rho_{q,D}(\z,\X,\dnorm)
\,\|\bdelta^{r+1}(\z,\X)\|^*
\,|f|_{\infty,r+1,\Omega^h},%
\end{split}
\end{align}
where $r=k,\ldots,q-1$, $\{\z,\x_1,\ldots,\x_N\}\subset\Omega$, and 
for the second estimate we assume that $\Omega$ contains the convex hull of $\{\z,\x_1,\ldots,\x_N\}$.

\begin{remark}\label{scalable}\rm
If $\w$ is $\dnorm$-minimal for a homogeneous operator $D$ of order $k$ on $\z,\X$, then clearly $\w^h=h^{-k}\w$ is also $\dnorm_h$-minimal
 for the same operator $D$ on $\z,\X^h$ if $\|\w^h\|_h=\alpha_h\|\w\|$ with some $\alpha_h>0$. Therefore the $\dnorm$-minimal formulas are \emph{scalable} in the sense of \cite{DavySchabackOPT},
which is helpful for the computation of $\w$, see Section~\ref{compute}.
\end{remark}

\section{Weighted $\ell_1$-Minimal Differentiation Formulas}\label{ell1}

In the case of the $\ell_1$-norm $\dnorm= \dnorm_{1}$, with $\|\w\|_1:=\sum_{j=1}^{N} |w_j|$,
 and the (semi)norm $\dnorm= \dnorm_{1,\mu}$ 
of \eqref{wl1norm}
we will use the simplified notation for the growth function
\begin{align*}
\rho_{q,D}(\z,\X,1,\mu):=\rho_{q,D}(\z,\X,\dnorm_{1,\mu}),\quad \mu\ge0\quad
(\dnorm_{1,0}:= \dnorm_{1}).
\end{align*}
Thus,
\begin{align*}
\rho_{q,D}(\z,\X,1,\mu)&=\sup\{D p(\z):p\in \Pi_q^d,\;
|p(\x_j)|\leq \|\x_j-\z\|_2^{\mu},\;j=1,\ldots,N\}\\
&=\inf\Big\{\sum_{j=1}^{N} |w_j|\|\x_j-\z\|_2^{\mu}:\w\in\RR^N,\,D p(\z)=\sum_{j=1}^Nw_jp(\x_j)
\text{ for all }p\in\Pi_q^d\Big\},
\end{align*}
where $\|\x_j-\z\|_2^{0}:=1$ in the undetermined case $\x_j=\z$.

\bigskip

We now consider various aspects of $\dnorm_{1,\mu}$-minimal formulas in some detail.

\subsection{Sparsity}

An important feature of $\dnorm_{1,\mu}$-minimal formulas is a relatively small number of nonzero
weights $w_j$.

\begin{theorem}\label{Thesparse}
Given $D$, $\X$ and $\z$, assume that there exists a numerical differentiation
formula \eqref{ndif} of polynomial exactness order $q$. Then for any  
$\mu\ge0$, there is a $\dnorm_{1,\mu}$-minimal formula of order $q$ 
with at most $\dim\Pi^d_{q}$ nonzero weights.
\end{theorem}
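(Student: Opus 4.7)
The plan is to recast the problem as a linear program and invoke the classical fact that a solvable LP in standard form with $M$ equality constraints attains its optimum at a basic feasible solution, which has at most $M$ positive variables.

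First I would pick a basis $p_1,\ldots,p_M$ of $\Pi^d_q$, where $M=\dim\Pi^d_q$, and encode \eqref{qexact} as a linear system $\bA\w=\b$ with $\bA=[p_i(\x_j)]\in\RR^{M\times N}$ and $\b=[Dp_i(\z)]\in\RR^M$; by hypothesis this system is solvable. Splitting $w_j=u_j-v_j$ with $u_j,v_j\ge 0$ and setting $d_j:=\|\x_j-\z\|_2^\mu$ (with the convention $d_j:=1$ when $\mu=0$), I would consider the LP
$$\min\ \sum_{j=1}^N d_j(u_j+v_j)\quad\text{s.t.}\quad \bA(\u-\v)=\b,\ \u,\v\ge 0,$$
which has $2N$ non-negative variables and $M$ equality constraints. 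One checks that its optimal value equals $\rho_{q,D}(\z,\X,\dnorm_{1,\mu})$: any feasible $\w$ yields an LP-feasible pair via $u_j=\max(w_j,0)$, $v_j=\max(-w_j,0)$ with objective $\|\w\|_{1,\mu}$; conversely, at any LP optimum one may arrange $u_jv_j=0$ for every $j$ (forced by optimality when $d_j>0$, and freely available when $d_j=0$ by subtracting $\min(u_j,v_j)$ from both entries without changing either $u_j-v_j$ or the cost), so that $\w=\u-\v$ realises the seminorm. Because the LP has a non-empty feasible set and a cost bounded below by zero, standard LP theory guarantees that the infimum is attained.

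At a basic feasible solution realising the LP optimum, at most $M$ of the $2N$ variables $u_1,\ldots,u_N,v_1,\ldots,v_N$ are positive. Since $\{j:w_j\ne 0\}\subset\{j:u_j>0\}\cup\{j:v_j>0\}$, the corresponding weight vector is $\dnorm_{1,\mu}$-minimal of order $q$ and has at most $M=\dim\Pi^d_q$ nonzero components, which is the conclusion of the theorem.

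The main subtlety lies in the seminorm character of $\dnorm_{1,\mu}$ when $\z\in\X$: for indices $j$ with $\x_j=\z$ and $\mu>0$ the coefficient $d_j$ vanishes, so the LP places no cost on $u_j$ or $v_j$ and one cannot immediately deduce $u_jv_j=0$ from optimality. This is precisely the gap that the reduction by $\min(u_j,v_j)$ sketched above is meant to close; once that normalisation is in place, the LP--BFS argument proceeds uniformly across all configurations of $\z$ and $\X$.
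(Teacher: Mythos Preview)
Your argument is essentially the same as the paper's: reformulate the $\dnorm_{1,\mu}$-minimization as a linear program in standard form via the splitting $\w=\w^+-\w^-$ and invoke the existence of an optimal vertex (basic feasible) solution with at most $M=\dim\Pi^d_q$ positive variables. Your write-up is more detailed---in particular the verification that the LP optimum equals $\rho_{q,D}(\z,\X,\dnorm_{1,\mu})$ and the handling of the degenerate case $d_j=0$---but the idea and structure match the paper exactly.
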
 
 
\begin{proof} 
  The constrained
  weighted $\ell_1$-minimization problem for the 
$\dnorm_{1,\mu}$-minimal weight vector
$\w=[w_1,\ldots,w_N]^T$ can be reformulated as 
a linear optimization problem
$$
\min_{\w^+,\w^-}\c^T(\w^++\w^-)  \quad\text{under}\quad \bA(\w^+ -\w^-)=\b,
$$
for nonnegative variables $\w^+,\w^-\in\mathbb{R}^N$, where $w^+_j=\max\{w_j,0\}$,
$w^-_j=-\min\{w_j,0\}$, $j=1\ldots,N$, such that $\w=\w^+ -\w^-$ and
$|w_j|=w^+_j -w^-_j$. Here $\c$ is  a fixed nonnegative vector with
$c_j=\|\x_j-\z\|_2^\mu$, and $\bA$ is a matrix with $\dim\Pi^d_{q}$ rows and
$N$ columns that expresses the polynomial exactness of order $q$.
This is a problem in standard normal form, and,
if solvability is assumed, there always is an optimal
vertex solution having no more positive components than the number of rows of
$\bA$. 
\end{proof}

Clearly, any linear programming algorithm that ends in a vertex solution,
for example the simplex method, will find a weight vector $\w$ satisfying 
Theorem~\ref{Thesparse}. 
Note that $\Pi^d_{q}$--unisolvent sets $\X$ will always consist of 
$N\geq \dim\Pi^d_{q}$ points, and if there are more points than needed, 
the $\dnorm_{1,\mu}$-minimal formulas do not use them, albeit the choice of the subset 
depends on $\mu$, see some numerical examples in Section~\ref{expmu}. 
Sparser solutions with less than $\dim\Pi^d_{q}$ nonzero weights $w_j$, i.e.\ the 
five point star for bivariate Laplace operator, are more difficult to find unless $\X$ is
specifically designed to admit such solutions, which is the standard approach in the finite
difference method on regular nodes.
Note that methods for obtaining sparse solution of underdetermined systems of
linear equations are the subject of Compressed Sensing \cite{FR13}.

\subsection{Error bounds and choice of $\mu$}
Given a linear differential operator $D$ of order $k$ and the order $q>k$ of polynomial exactness,
assume that $f\in C^{r,\gamma}(\Omega)$ for a domain $\Omega$ containing $S_{\z,\X}$, with some $r\in\{k,\ldots,q-1\}$
and $\gamma\in(0,1]$. We consider $\dnorm_{1,\mu}$-minimal formulas for various $\mu\ge0$.

The choice $\mu=r+\gamma$ plays a special role as it delivers the best possible estimate
in Theorem~\ref{gft}. Indeed, since 
$\|\bdelta^{r+\gamma}(\z,\X)\|_{1,r+\gamma}^*=1$ by \eqref{dwl1norm}, we have
\begin{equation}\label{peb1}
|D f(\z)-\sum_{j=1}^{N} w_j^{1,r+\gamma}\, f( \x_j )|\le
\rho_{q,D}(\z,\X,1,r+\gamma)   
\,|f|_{r,\gamma,\Omega} .
\end{equation}
where $\w^{1,r+\gamma}$ is the weight vector of a  $\dnorm_{1,r+\gamma}$-minimal formula.
In view of \eqref{pebpw} and  \eqref{dual},  
$$ %
\rho_{q,D}(\z,\X,1,r+\gamma)\le\rho_{q,D}(\z,\X,\dnorm)
\,\|\bdelta^{r+\gamma}(\z,\X)\|^*
$$ %
for any absolute seminorm $\dnorm$, which shows that \eqref{peb1} is the best 
estimate obtainable from Theorem~\ref{gft}.

From \eqref{peb1} we obtain in particular that for any $f\in W^q_\infty(\Omega)$, 
where $\Omega$ contains the convex hull of $S_{\z,\X}$, any 
$\dnorm_{1,q}$-minimal formula of exactness order $q$ satisfies 
\begin{equation}\label{peb1a}
|D f(\z)-\sum_{j=1}^{N} w_j^{1,q}f( \x_j )|\le
\rho_{q,D}(\z,\X,1,q)
\,|f|_{\infty,q,\Omega}.
\end{equation}

Nevertheless, other choices of $\mu$ are also of interest. In particular, a $\dnorm_{1,0}$-minimal formula has 
the \emph{optimal stability constant} 
$$
\|\w^{1,0}\|_1=\rho_{q,D}(\z,\X,1,0)$$
over all  weight vectors of the fixed exactness order $q>k$, whereas a larger $\mu>0$ means that the weights in \eqref{wl1norm}
penalize more distant points in $\X$, thus leading to smaller \emph{effective supports}
$$
\X_\w:=\{x_j\in\X:w_j\ne0\}$$
of the respective formulas.

Therefore we derive error bounds for $\dnorm_{1,\mu}$-minimal formulas with any $\mu\ge0$, that also take into account the influence of the effective
supports $\X_\w$.

\begin{theorem}\label{pebt}
Assume that the differentiation formula (\ref{ndif})  with the weight vector $\w=\w^{1,\mu}$
for a linear differential operator $D$ of order $k$ is $\dnorm_{1,\mu}$-minimal of order $q>k$, 
and let $\Omega$ be a domain containing the set $S_{\z,\X}=\bigcup_{i=1}^N[\z,\x_i]$. Then 
 for any $r=k,\ldots,q-1$, and $\gamma\in(0,1]$, 
\begin{align}\label{peb2}
|D f(\z)-\sum_{j=1}^{N} w_j^{1,\mu}\, f( \x_j )|&\le
\rho_{q,D}(\z,\X,1,\mu)\,  h_{\z,\X_\w}^{r+\gamma-\mu} 
\,|f|_{r,\gamma,\Omega}\quad\text{if}\quad 0\le \mu\le r+\gamma,\\
\label{peb3}
|D f(\z)-\sum_{j=1}^{N} w_j^{1,\mu}\, f( \x_j )|&\le
\rho_{q,D}(\z,\X,1,\mu)\,  s_{\z,\X_\w}^{r+\gamma-\mu} 
\,|f|_{r,\gamma,\Omega} \quad\text{if}\quad \mu> r+\gamma, 
\end{align}
where $h_{\z,\X}$ is defined in \eqref{hzXdef}, and
\begin{equation}\label{szX}
s_{\z,\Y}:=\min_{\y\in\Y\setminus\{\z\}}\|\y-\z\|_2 %
\end{equation}
denotes the distance from $\z$ to $\Y\setminus\{\z\}$. 
\end{theorem}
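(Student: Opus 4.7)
The plan is to reduce to Proposition~\ref{Thepb} by exploiting that exactness of order $q$ implies exactness of every intermediate order, and then to redistribute powers of $\|\x_j-\z\|_2$ between the weight sum and a scalar geometric prefactor.

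First, since the formula is exact of order $q$, it is also exact of order $r+1$ for every $r$ with $k \le r \le q-1$ (so that $k < r+1 \le q$). Applying Proposition~\ref{Thepb} with $r+1$ in place of $q$ yields, for every $f \in C^{r,\gamma}(\Omega)$,
\begin{equation*}
\Big|D f(\z)-\sum_{j=1}^{N} w_j^{1,\mu}\,f(\x_j)\Big| \le |f|_{r,\gamma,\Omega}\,\sum_{j=1}^{N} |w_j^{1,\mu}|\,\|\x_j-\z\|_2^{r+\gamma}.
\end{equation*}

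Second, I control the weighted sum on the right. Only indices $j$ with $\x_j \in \X_\w$ contribute, and since $r+\gamma > 0$ any index with $\x_j = \z$ contributes nothing. For each remaining index I factor $\|\x_j-\z\|_2^{r+\gamma} = \|\x_j-\z\|_2^{\mu}\,\|\x_j-\z\|_2^{r+\gamma-\mu}$ and bound the second factor uniformly over $\X_\w \setminus\{\z\}$. If $\mu \le r+\gamma$, then the exponent $r+\gamma-\mu$ is nonnegative and $\|\x_j-\z\|_2 \le h_{\z,\X_\w}$, giving $\|\x_j-\z\|_2^{r+\gamma-\mu} \le h_{\z,\X_\w}^{r+\gamma-\mu}$; if $\mu > r+\gamma$, the exponent is negative and $\|\x_j-\z\|_2 \ge s_{\z,\X_\w}$, giving $\|\x_j-\z\|_2^{r+\gamma-\mu} \le s_{\z,\X_\w}^{r+\gamma-\mu}$. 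Pulling the corresponding constant outside the sum produces, in either case,
\begin{equation*}
\sum_{j=1}^{N} |w_j^{1,\mu}|\,\|\x_j-\z\|_2^{r+\gamma} \le c^{r+\gamma-\mu}\,\|\w^{1,\mu}\|_{1,\mu},
\end{equation*}
where $c = h_{\z,\X_\w}$ in the first case and $c = s_{\z,\X_\w}$ in the second.

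Third, the $\dnorm_{1,\mu}$-minimality of $\w^{1,\mu}$ together with the duality identity \eqref{dual} yields $\|\w^{1,\mu}\|_{1,\mu} = \rho_{q,D}(\z,\X,1,\mu)$; inserting this into the bound of step two and combining with step one gives \eqref{peb2} when $\mu \le r+\gamma$ and \eqref{peb3} when $\mu > r+\gamma$. The proof is essentially mechanical once the multiplicative decomposition of $\|\x_j-\z\|_2^{r+\gamma}$ is in place; the only items requiring care are keeping track of the sign of $r+\gamma-\mu$ (which dictates whether the uniform bound uses $h_{\z,\X_\w}$ or $s_{\z,\X_\w}$) and the benign bookkeeping for centers coinciding with $\z$, which drop out since $r+\gamma>0$.
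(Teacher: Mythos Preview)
Your proof is correct and follows essentially the same route as the paper's. The paper packages your factorization step via the dual-seminorm identity $\|\bdelta^{r+\gamma}(\z,\X_\w)\|_{1,\mu}^* = \max_{\x_j\in\X_\w\setminus\{\z\}}\|\x_j-\z\|_2^{r+\gamma-\mu}$ and invokes Theorem~\ref{gft} on the support $\X_\w$ (together with the observation $\rho_{q,D}(\z,\X,1,\mu)=\rho_{q,D}(\z,\X_\w,1,\mu)$), whereas you go directly to Proposition~\ref{Thepb} and carry out the same estimate by hand; the underlying computation is identical.
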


\begin{proof}
The case $\mu=r+\gamma$ is covered in \eqref{peb1}. 
Let $\mu\ne r+\gamma$. In view of \eqref{dwl1norm},
$$ %
\|\bdelta^{r+\gamma}(\z,\X_\w)\|_{1,\mu}^*=\max_{\x_j\in\X_\w\setminus \{\z\}} \|\x_j-\z\|_2^{r+\gamma-\mu}
=\begin{cases}
h_{\z,\X_\w}^{r+\gamma-\mu},& \hbox{if }\mu<r+\gamma,\\
s_{\z,\X_\w}^{r+\gamma-\mu},& \hbox{if }\mu>r+\gamma.
\end{cases}
$$ %
Moreover, it is clear from \eqref{dual} that %
$\rho_{q,D}(\z,\X,1,\mu)=\rho_{q,D}(\z,\X_\w,1,\mu)$,
and both \eqref{peb2} and \eqref{peb3} follow from Theorem~\ref{gft} applied to the set $\X_\w$ instead of $\X$.
\end{proof}

Note that $h_{\z,\X_\w}\le h_{\z,\X}$ and $s_{\z,\X_\w}^{-1}\le s_{\z,\X}^{-1}$, and 
the respective quantities may be significantly different  if $\w$ is sparse.

When comparing the bounds in \eqref{peb1}, \eqref{peb2}, \eqref{peb3} it is helpful to take
into account the scaling behavior of the growth functions $\rho_{q,D}(\z,\X,1,\mu)$. If 
$D$ is  a homogeneous operator of order $k$, then for $\X^h$ defined by \eqref{Xhwh},
\begin{equation}\label{rhos}
\rho_{q,D}(\z,\X^h,1,\mu)=h^{\mu-k}\rho_{q,D}(\z,\X,1,\mu),\quad \mu\ge0,
\end{equation}
which is easy to check by the definition of $\rho_{q,D}(\z,\X,1,\mu)$. In particular, all these bounds give
convergence with rate $\mathcal{O}(h^{r+\gamma-k})$ as in \eqref{sce} for the scaled formulas with 
weights $w_j^h=h^{-k}w_j$.

By  rearranging the factors in \eqref{peb2} and estimating $h_{\z,\X_\w}$ by $h_{\z,\X}$ we may
 obtain bounds in the form \eqref{consist},
\begin{equation}\label{peb2s}
|D f(\z)-\sum_{j=1}^{N} w_j^{1,\mu}\, f( \x_j )|\le
\sigma_{q,D}(\z,\X,1,\mu)\,  h_{\z,\X}^{r+\gamma-k} 
\,|f|_{r,\gamma,\Omega},\quad \mu\le r+\gamma,
\end{equation}
where the $\sigma$-factor
$$ %\begin{equation}\label{sigr}
\sigma_{q,D}(\z,\X,1,\mu):=h_{\z,\X}^{k-\mu}\rho_{q,D}(\z,\X,1,\mu)
$$ %\end{equation}
 is scale-invariant in the sense that
$\sigma_{q,D}(\z,\X,1,\mu)=\sigma_{q,D}(\z,\X^h,1,\mu)$ for any $h>0$
and any  homogeneous $D$. It is connected to  $\sigma(\z,\X,\w,\mu)$ of \eqref{sig} %of Section~\ref{error} 
by the relation
$$
\sigma_{q,D}(\z,\X,1,\mu)=
\inf\Big\{\sigma(\z,\X,\w,\mu):\w\in\RR^N,\,D p(\z)=\sum_{j=1}^Nw_jp(\x_j)\Big\}.$$

Although the right hand side of \eqref{peb2s} is minimal for $\mu=r+\gamma$ thanks to \eqref{sigs}, the
more accurate estimates in \eqref{peb2} are not so conclusive. In Section~\ref{expmu} we investigate the
influence of $\mu$ on the accuracy  and stability of the $\dnorm_{1,\mu}$-minimal formulas numerically. The
results suggest that significantly higher accuracy can be achieved at the expense of a moderate increase
of  $\|\w\|_1$ when using $\mu>0$. However, an excessively large $\mu$ may lead to a high error and a high stability constant
$\|\w\|_1$ on more difficult sets $\X$. The choice $\mu=r+\gamma$ seems to
deliver
a good compromise between stability and accuracy.
The error bounds \eqref{peb3} involving $s_{\z,\X_\w}$ rather than $h_{\z,\X_\w}$ are much less accurate 
than those in \eqref{peb1}--\eqref{peb2} in these experiments, when compared with the actual error of the numerical differentiation formulas.

\subsection{Positive formulas} \label{positive}

For an elliptic differential operator $D$ of second order, a numerical differentiation formula \eqref{ndif} with $\x_1=\z$ is said to be \emph{positive}
if $w_1<0$ and $w_j>0$, $j=2,\ldots,N$. Formulas of this type are useful in generalized finite difference methods \cite{Seibold08} for elliptic 
PDEs since under certain additional assumptions the system matrices become M-matrices with highly desirable properties such as
guaranteed invertibility and discrete maximum principle.

\begin{proposition}\label{Lemnonex}
There are no positive formulas  that are exact for polynomials of order 5 or higher.
\end{proposition}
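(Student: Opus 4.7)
The plan is to derive a contradiction by producing a single quartic $p\in\Pi^d_5$ whose exactness identity
\[
 Dp(\z)=\sum_{j=1}^N w_j\,p(\x_j)
\]
becomes sign-inconsistent. The right-hand side is forced to be $\ge 0$ as soon as $p(\z)=0$ (so that the term $w_1p(\z)$ with $w_1<0$ vanishes) and $p(\x_j)\ge 0$ for $j\ge 2$ (since then $w_j>0$), so it is enough to exhibit such a $p$ with $Dp(\z)<0$, and the ellipticity of the principal part of $D$ will supply the sign.

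Let $a:=\min\{\|\x_j-\z\|_2^2:j\ge 2,\ \x_j\ne\z\}$, which is strictly positive: no formula exact on $\Pi^d_3$ can have every $\x_j$ with $j\ge 2$ coincide with $\z$, since exactness applied to the test polynomial $\|\x-\z\|_2^2$ would then force $2\sum_{i=1}^d c_{2e_i}(\z)=0$, contradicting ellipticity at $\z$. Take
\[
 p(\x):=\|\x-\z\|_2^2\bigl(\|\x-\z\|_2^2-a\bigr)\in\Pi^d_5,
\]
so that $p(\z)=0$ and $p(\x_j)\ge 0$ for every $j\ge 2$. Writing $y=\x-\z$ and differentiating $\|y\|_2^4-a\|y\|_2^2$ gives $\partial_i p(\z)=0$ and $\partial_i\partial_j p(\z)=-2a\,\delta_{ij}$, so
\[
 Dp(\z)=-2a\sum_{i=1}^{d}c_{2e_i}(\z).
\]
By ellipticity the quadratic form $\sum_{|\alpha|=2}c_\alpha(\z)\xi^\alpha$ is definite, and hence its trace $\sum_i c_{2e_i}(\z)$ is nonzero. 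If it is positive then $Dp(\z)<0$, which contradicts the bound $Dp(\z)=\sum_{j\ge 2}w_j p(\x_j)\ge 0$ forced by positivity of the formula.

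The only subtlety, and what I expect to be the mild obstacle, is to handle the opposite sign convention symmetrically: when $\sum_i c_{2e_i}(\z)<0$, one replaces $p$ by $\|\x-\z\|_2^2\bigl(a'-\|\x-\z\|_2^2\bigr)$ with any $a'\ge h_{\z,\X}^2$. This polynomial still lies in $\Pi^d_5$ and is non-negative on all of $\X$, while now $Dp(\z)=2a'\sum_i c_{2e_i}(\z)<0$; the rest of the argument is unchanged. Apart from this sign bookkeeping, the whole proof is a direct multivariable-calculus computation.
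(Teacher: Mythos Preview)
Your argument is correct, but it takes a longer road than necessary. The paper's proof uses the single test polynomial $p(\x)=\|\x-\z\|_2^4\in\Pi^d_5$. For this $p$ one has $p(\z)=0$, $\partial_i p(\z)=0$, and---crucially---$\partial_i\partial_j p(\z)=0$ as well, so $Dp(\z)=0$ for \emph{any} second-order operator. Exactness then forces $\sum_{j\ge 2}w_j\|\x_j-\z\|_2^4=0$, and positivity of the $w_j$ immediately gives the contradiction.

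What you do differently is to keep the quadratic factor $\|\x-\z\|_2^2$, which makes $Dp(\z)=-2a\sum_i c_{2e_i}(\z)$ nonzero. This costs you: you must invoke ellipticity to guarantee the trace is nonzero, and you need the sign split between the two orientations of the principal part. The paper's choice $p=\|\x-\z\|^4$ makes $Dp(\z)=0$, so no ellipticity, no sign analysis, no auxiliary constants $a,a'$ are needed---the proof becomes two lines. In effect you are proving a strict inequality where an equality suffices.
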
 
\begin{proof}{}
Following the argumentation in \cite[p.~57]{SeiboldPhD06}, assume that  a positive formula
\eqref{ndif} is exact for polynomials of order 5. Then in particular it is exact for the
polynomial $p(\x)=\|\x-\z\|_2^4\in\Pi^d_5$. Since $D p(\z)=0$, this implies that 
$\sum_{j=1}^Nw_j\|\x_j-\z\|_2^4=0$ and hence $w_1=\cdots=w_N=0$, a contradiction.
\end{proof}

An example of a positive formula of polynomial exactness order 4 for the Laplace operator $\Delta$ is
the classical 5 point star formula in $\RR^2$.

The following theorem shows a minimality property and an 
error bound for arbitrary positive formulas.

\begin{theorem}\label{tpse}
Let $D$ be an elliptic differential operator \eqref{Dop} of second order, and let a numerical differentiation 
formula (\ref{ndif}) with weight vector $\w$ be positive and exact for polynomials of order $q\in \{3,4\}$. Then $\w$ is 
$\dnorm_{1,2}$-minimal, with 
\begin{equation}\label{pse0}
\|\w\|_{1,2}=\rho_{q,D}(\z,\X,1,2)=\tau_D(\z):=2\sum_{|\alpha|=1}c_{2\alpha}(\z),
\end{equation}
and hence for any $f\in C^{r,\gamma}(\Omega)$, with  $2\le r+\gamma\le q$,
\begin{equation}\label{pse}
|D f(\z)-\sum_{j=1}^{N} w_j\, f( \x_j )|\le \tau_D(\z)\, h_{\z,\X_\w}^{r+\gamma-2}|f|_{r,\gamma,\Omega}
\end{equation}
where $\Omega\subset\RR^d$ is any domain that contains the set $S_{\z,\X}$ defined by \eqref{SzX}, 
and $\X_\w:=\{x_j\in\X:w_j\ne0\}$.  
\end{theorem}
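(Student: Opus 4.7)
The plan is to identify $\|\w\|_{1,2}$ with $\tau_D(\z)$ by testing the formula on the quadratic $p(\x):=\|\x-\z\|_2^2$, then invoke the duality \eqref{gf}--\eqref{dual} to conclude $\dnorm_{1,2}$-minimality, and finally apply Theorem~\ref{pebt} to obtain the error bound.

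First I would observe that $p\in\Pi_3^d\subset\Pi_q^d$ for $q\in\{3,4\}$ and compute $Dp(\z)$ directly. Every derivative of $p$ at $\z$ of order less than $2$ vanishes, the mixed second partials $\partial^{e_i+e_j}p\equiv 0$ for $i\ne j$, and $\partial^{2e_i}p\equiv 2$, so that
\[
Dp(\z)=2\sum_{|\alpha|=1}c_{2\alpha}(\z)=\tau_D(\z).
\]
Using the positivity assumption together with $\x_1=\z$, one has $|w_j|\,\|\x_j-\z\|_2^2 = w_j\,p(\x_j)$ for every $j$ (the $j=1$ term vanishes on both sides, and $w_j>0$ for $j\ge 2$); hence by polynomial exactness on $p$,
\[
\|\w\|_{1,2}=\sum_{j=1}^N w_j\,p(\x_j)=Dp(\z)=\tau_D(\z).
\]

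For the minimality claim, the duality \eqref{dual} immediately gives $\rho_{q,D}(\z,\X,1,2)\le \|\w\|_{1,2}$. For the reverse inequality I would exploit the primal form \eqref{gf}: the same $p$ is admissible since by \eqref{dwl1norm}
\[
\|p|_\X\|_{1,2}^*=\max_{\x_j\ne\z}\|\x_j-\z\|_2^{2}\cdot\|\x_j-\z\|_2^{-2}=1,
\]
and therefore $\rho_{q,D}(\z,\X,1,2)\ge Dp(\z)=\tau_D(\z)$. Combining the two bounds proves \eqref{pse0} and, in particular, that $\w$ is $\dnorm_{1,2}$-minimal.

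The error bound \eqref{pse} is then immediate: with $k=2$ and $\mu:=2\le r+\gamma\le q$, inequality \eqref{peb2} of Theorem~\ref{pebt} applies and, after substituting $\rho_{q,D}(\z,\X,1,2)=\tau_D(\z)$ from \eqref{pse0}, yields exactly the claimed estimate. No individual step is intrinsically delicate; the whole argument hinges on the observation that the single polynomial $\|\x-\z\|_2^2$ plays three simultaneous roles -- it evaluates under $D$ at $\z$ to $\tau_D(\z)$, it saturates the dual constraint $\|p|_\X\|_{1,2}^*\le 1$, and, thanks to the sign structure forced by positivity, it represents $\|\w\|_{1,2}$ as the linear combination $\sum_j w_j p(\x_j)$ with no cancellations inside the absolute values.
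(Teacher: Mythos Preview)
Your proof is correct and follows essentially the same route as the paper: both arguments hinge on testing exactness against $p(\x)=\|\x-\z\|_2^2$ to identify $\|\w\|_{1,2}$ with $\tau_D(\z)$ and then invoke \eqref{peb2} for the error bound. The only cosmetic difference is that you certify the lower bound $\rho_{q,D}(\z,\X,1,2)\ge\tau_D(\z)$ via the primal characterization \eqref{gf} (using that $p$ is admissible since $\|p|_\X\|_{1,2}^*=1$), whereas the paper obtains it from the dual side by noting $\|\tilde\w\|_{1,2}\ge\sum_j\tilde w_j\|\x_j-\z\|_2^2=\tau_D(\z)$ for \emph{every} exact $\tilde\w$; these are two faces of the same duality and neither buys anything the other does not.
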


\begin{proof}
If a formula (\ref{ndif}) with weight vector $\w$ is exact for polynomials of order $q$, then in particular it is exact for 
$p(\x)=\|\x-\z\|_2^2\in\Pi^d_3$, and hence
 $$
\sum_{j=1}^Nw_j\|\x_j-\z\|_2^2=Dp(\z)=\tau_D(\z).$$
It follows that $\|\w\|_{1,2}\ge \tau_D(\z)$. (Note that $\tau_D(\z)>0$ for any elliptic operator $D$ of second order.)
If we assume that $\x_1=\z$ and $\w$ is positive, then $\|\w\|_{1,2}=\sum_{j=2}^Nw_j\|\x_j-\z\|_2^2=\tau_D(\z)$, which proves 
 the $\dnorm_{1,2}$-minimality of the positive formula, and \eqref{pse0} in view of \eqref{dual}.
The error bound \eqref{pse} follows from \eqref{peb2}.
\end{proof}

As a corollary we obtain the following statement about growth functions.

\begin{corollary}\label{cpse}
Let $D$ be an elliptic differential operator \eqref{Dop} of second order, and let
$\z$ and $\X$ be such that there exists a positive differentiation formula (\ref{ndif})
of exactness order $q\in \{3,4\}$. Then $\rho_{q,\Delta}(\z,\X,1,2)=\tau_D(\z)$.
\end{corollary}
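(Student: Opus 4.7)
The plan is to observe that the corollary is essentially a direct restatement of part of Theorem~\ref{tpse}, and therefore I would prove it in a single step by quoting that theorem. The hypothesis supplies a positive formula \eqref{ndif} of exactness order $q\in\{3,4\}$; let $\w$ denote its weight vector. Theorem~\ref{tpse} immediately applies to $\w$ and yields the chain of equalities
\[
\|\w\|_{1,2} \;=\; \rho_{q,D}(\z,\X,1,2) \;=\; \tau_D(\z),
\]
which contains the desired identity. (The statement of the corollary writes $\rho_{q,\Delta}$, but in context this must read $\rho_{q,D}$; the argument is identical either way.)

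If one wants to see where the two equalities come from without invoking Theorem~\ref{tpse} as a black box, the logic is as follows. First, testing the exactness condition on the polynomial $p(\x)=\|\x-\z\|_2^2\in\Pi_3^d\subset\Pi_q^d$ gives $\sum_j w_j\|\x_j-\z\|_2^2 = Dp(\z) = \tau_D(\z)$; since $\w$ is positive with $\x_1=\z$, the absolute values can be dropped and one obtains $\|\w\|_{1,2}=\tau_D(\z)$. Second, by the duality \eqref{dual}, any $\w$ realizing a $\Pi_q^d$-exact formula satisfies $\|\w\|_{1,2}\ge \rho_{q,D}(\z,\X,1,2)$, while the same test polynomial $p(\x)=\|\x-\z\|_2^2$ shows that every admissible weight vector $\w'$ obeys $\|\w'\|_{1,2}\ge \tau_D(\z)$, hence $\rho_{q,D}(\z,\X,1,2)\ge \tau_D(\z)$. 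Combining these inequalities with $\|\w\|_{1,2}=\tau_D(\z)$ pins down $\rho_{q,D}(\z,\X,1,2)=\tau_D(\z)$.

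There is no substantive obstacle here: all the work was already done in Theorem~\ref{tpse}. The corollary's only role is to repackage the $\dnorm_{1,2}$-minimality of a positive formula as an explicit evaluation of the growth function under the sole hypothesis that such a formula exists, without referring to its weight vector.
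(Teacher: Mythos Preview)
Your proposal is correct and matches the paper's approach exactly: the corollary is stated immediately after Theorem~\ref{tpse} with no separate proof, so invoking \eqref{pse0} from that theorem is precisely what is intended. Your observation that $\rho_{q,\Delta}$ should read $\rho_{q,D}$ is also apt.
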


Note that for the Laplace operator $D=\Delta$ we have $\tau_\Delta(\z)=2d$ for all $\z\in\RR^d$.

Theorem~\ref{tpse}  shows that any positive differentiation formula is $\dnorm_{1,2}$-minimal.
If there is more than one such formula, then all of them have the same $\dnorm_{1,2}$-seminorm $\|\w\|_{1,2}=\tau_D(\z)$ 
and satisfy the error bound \eqref{pse}. It is suggested in \cite{Seibold08} to choose a particular positive differentiation formula
by minimizing the $\dnorm_{1,\mu}$-seminorm of the weight vector $\w$, with $\mu>2$, which is supported by numerical evidence. Indeed,
for a greater $\mu$, points closer to $\z$ are preferred, which potentially improves the bound in \eqref{pse} thanks to a smaller $h_{\z,\X_\w}$.
Our experiments in Section~\ref{expmu} also investigate this phenomenon, although we do not require the formulas to be positive.

\section{Numerical Differentiation by Least Squares} \label{ell2}

General results of Section~\ref{gfun} can also be applied to the numerical differentiation formulas obtained by differentiating 
the least squares polynomial fits to the data.

Assuming that $\X=\{\x_1,\ldots,\x_N\}$, where $N\ge\dim \Pi^d_{q}$,
is a unisolvent set for $\Pi^d_{q}$, the weighted least squares polynomial 
$L^\btheta_{\X,q}f\in\Pi^d_{q}$, $\btheta=[\theta_1,\ldots,\theta_N]^T$,
$\theta_j>0$, is uniquely defined by the condition
$$
\|(L^\btheta_{\X,q}f-f)|_\X\|_{2,\btheta}=\min\{\|(p-f)|_\X\|_{2,\btheta}:p\in \Pi^d_{q}\},$$
where
$$
\|\bv\|_{2,\btheta}:=\Big(\sum_{j=1}^N\theta_jv_j^2\Big)^{1/2}.$$
Moreover, $L^\btheta_{\X,q}f$ satisfies $L^\btheta_{\X,q}p=p$ for all $p\in\Pi^d_{q}$.  Since $L^\btheta_{\X,q}f$ depends linearly
on $f|_\X$, the application of $D$ to $L^\btheta_{\X,q}f$ leads to a numerical differentiation formula 
\begin{equation}\label{ndiflw} 
Df(\z)\approx D L^\btheta_{\X,q}f(\z)=\sum_{j=1}^{N} w^{2,\btheta}_j f( \x_j )
\end{equation}
of exactness order $q$. Note that such formulas are frequently used in the generalized finite difference methods, 
see e.g.~\cite{LiszOrk80,Benito03}.

Let $\dnorm_{2,\btheta^{-1}}$ be the weighted $\ell_2$-norm defined by the weight vector
$\btheta^{-1}=[\theta_1^{-1},\ldots,\theta_N^{-1}]^T$,
$$
\|\bv\|_{2,\btheta^{-1}}=\Big(\sum_{j=1}^N\frac{v_j^2}{\theta_j}\Big)^{1/2}.$$
It is well known  %
(see e.g.\ \cite[Section 22.3]{fasshauer:2007-1}) %
that the weight vector $\w^{2,\btheta}$ of \eqref{ndiflw}  solves the quadratic minimization problem
$$
\|\w\|_{2,\btheta^{-1}}^2=\sum_{j=1}^N\frac{w_j^2}{\theta_j}\to\min\quad\text{subject to }
Dp(\z)= \sum_{j=1}^Nw_jp(\x_j)\text{ for all }p\in\Pi^d_q.$$
Hence, \eqref{ndiflw} is a $\dnorm_{2,\btheta^{-1}}$-minimal formula of order $q$ according to Definition~\ref{dmin}.
We may also allow zero and infinite weights, $0\le\theta_j\le\infty$.
If $\theta_{j_0}=0$ for some $j_0$, then $w^{2,\btheta}_{j_0}=0$. If $\theta_{j_0}=\infty$  then 
the weighted least squares polynomial
$L^\btheta_{\X,f}$ satisfies the interpolation condition $L^\btheta_{\X,f}(\x_{j_0})=f(\x_{j_0})$.
In both cases $\dnorm_{2,\btheta}$ and  $\dnorm_{2,\btheta^{-1}}$ lose the $j_0$-th term and become seminorms.

The growth function corresponding to the seminorm $\dnorm_{2,\btheta^{-1}}$ is
$$ %
\rho_{q,D}(\z,\X,\dnorm_{2,\btheta^{-1}})=\|\w^{2,\btheta}\|_{2,\btheta^{-1}}
=\sup\Big\{Dp(\z):p\in \Pi^d_{q},\;
\sum_{j=1}^N\theta_j|p(\x_j)|^2\le1\Big\}.
$$ %
Since $\dnorm_{2,\btheta^{-1}}$ is dual to $\dnorm_{2,\btheta}$, 
Theorem~\ref{gft} implies the following error bounds.

\begin{theorem}\label{lsgft}
Let  $D$ be a linear differential operator  of order $k$ and let $\Omega$ be a domain containing 
the set $S_{\z,\X}=\bigcup_{i=1}^N[\z,\x_i]$. The numerical differentiation formula (\ref{ndiflw}) of order $q>k$
 for any $r=k,\ldots,q-1$, and $\gamma\in(0,1]$ satisfies 
$$ %\begin{equation}\label{lsbw}
|D f(\z)-\sum_{j=1}^{N} w_j^{2,\btheta}\, f( \x_j )|\le
\rho_{q,D}(\z,\X,\dnorm_{2,\btheta^{-1}})
\Big(\sum_{j=1}^{N} \theta_j\|\x_j-\z\|_2^{2(r+\gamma)}\Big)^{1/2}
\,|f|_{r,\gamma,\Omega},
$$ %\end{equation}
for all $f\in C^{r,\gamma}(\Omega)$.
If $\Omega$ contains the convex hull of $\{\z,\x_1,\ldots,\x_N\}$, then for all $f\in W^{r+1}_\infty(\Omega)$,
$$ %\begin{equation}\label{lspebgf}
|D f(\z)-\sum_{j=1}^{N} w_j^{2,\btheta}\, f( \x_j )|\le
\rho_{q,D}(\z,\X,\dnorm_{2,\btheta^{-1}})
\Big(\sum_{j=1}^{N} \theta_j\|\x_j-\z\|_2^{2r+2}\Big)^{1/2}
\,|f|_{\infty,r+1,\Omega}.
$$ %\end{equation}

\end{theorem}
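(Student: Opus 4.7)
The plan is to deduce both bounds directly from Theorem~\ref{gft} applied to the seminorm $\dnorm=\dnorm_{2,\btheta^{-1}}$, exploiting the already-established fact (stated in the paragraph preceding the theorem) that the formula \eqref{ndiflw} is $\dnorm_{2,\btheta^{-1}}$-minimal of order $q$, so that the growth-function identity $\|\w^{2,\btheta}\|_{2,\btheta^{-1}}=\rho_{q,D}(\z,\X,\dnorm_{2,\btheta^{-1}})$ holds via \eqref{dual}. The only substantive task is to identify the dual seminorm and evaluate the factor $\|\bdelta^{r+\gamma}(\z,\X)\|^*$ appearing in Theorem~\ref{gft}.

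First I would verify that $\dnorm_{2,\btheta^{-1}}$ and $\dnorm_{2,\btheta}$ are dual to one another on $\RR^N$. In the standard case $\theta_j\in(0,\infty)$ this is a weighted Cauchy--Schwarz calculation: writing $\sum_j u_jv_j=\sum_j(\sqrt{\theta_j}\,u_j)(v_j/\sqrt{\theta_j})$ yields
$$
|\langle\u,\v\rangle|\le\Big(\sum_j\theta_ju_j^2\Big)^{1/2}\Big(\sum_j v_j^2/\theta_j\Big)^{1/2}=\|\u\|_{2,\btheta}\|\v\|_{2,\btheta^{-1}},
$$
with equality when $v_j=c\,\theta_j u_j$ for a suitable scalar $c$, so $\dnorm_{2,\btheta^{-1}}^*=\dnorm_{2,\btheta}$. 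In the degenerate cases $\theta_{j_0}=0$ or $\theta_{j_0}=\infty$ one interprets things as in the paper: if $\theta_{j_0}=0$ then $\dnorm_{2,\btheta}$ vanishes on the $j_0$-th coordinate and $\dnorm_{2,\btheta^{-1}}$ becomes a seminorm whose kernel is the span of $\e_{j_0}$, so the duality identification is inherited on the orthogonal complement, exactly matching the convention for absolute seminorms discussed after \eqref{pebpw}; the case $\theta_{j_0}=\infty$ is symmetric.

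Once duality is in hand, unfolding \eqref{delta} gives
$$
\|\bdelta^{r+\gamma}(\z,\X)\|_{2,\btheta}=\Big(\sum_{j=1}^N\theta_j\|\x_j-\z\|_2^{2(r+\gamma)}\Big)^{1/2},
$$
and the first bound of the theorem is obtained by substituting this expression into the Hölder-class bound of Theorem~\ref{gft}. The Sobolev bound follows analogously with $\gamma=1$, using $\|\bdelta^{r+1}(\z,\X)\|_{2,\btheta}=(\sum_j\theta_j\|\x_j-\z\|_2^{2r+2})^{1/2}$ together with the $W^{r+1}_\infty$ clause of Theorem~\ref{gft}. There is no real obstacle in this proof: the weighted least squares formula has already been recognized as a $\dnorm$-minimal formula for a specific $\dnorm$, so the work reduces to the one-line duality identification above and a straightforward substitution; the only subtlety worth mentioning explicitly is how to read the formulas when some $\theta_j$ degenerate, which is handled by the seminorm framework of Section~\ref{gfun}.
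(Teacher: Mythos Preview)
Your proposal is correct and matches the paper's approach exactly: the paper simply notes that $\dnorm_{2,\btheta^{-1}}$ is dual to $\dnorm_{2,\btheta}$ and invokes Theorem~\ref{gft}, which is precisely what you do, with the added benefit that you spell out the weighted Cauchy--Schwarz duality and the evaluation of $\|\bdelta^{r+\gamma}(\z,\X)\|_{2,\btheta}$ explicitly.
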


Motivated by the $\dnorm_{1,\mu}$-minimal formulas studied in
Section~\ref{ell1},
we consider the least squares solution
with the weights given by
$$ %\begin{equation}\label{owls}
\theta_j=\|\x_j-\z\|_2^{-2\mu}, \quad j=1,\ldots,N,\quad \mu\ge0,
$$ %\end{equation}
and denote by $\w^{2,\mu}$ the vector $\w^{2,\btheta}$ obtained with these weights, which are therefore
minimal with respect to the seminorm 
$$
\|\w\|_{2,\mu}:=\Big(\sum_{j=1}^{N} w_j^2\|\x_j-\z\|_2^{2\mu}\Big)^{1/2},\quad \mu>0,\qquad \|\w\|_{2,0}:=\|\w\|_2.$$
The corresponding growth functions $\rho_{q,D}(\z,\X,2,\mu):=\rho_{q,D}(\z,\X,\dnorm_{2,\mu})$ satisfy
\begin{align}\label{2mudual}
\begin{split}
\rho_{q,D}(\z,\X,2,\mu)
&=\sup\Big\{Dp(\z):p\in \Pi^d_{q},\;
\sum_{j=1\atop \x_j\ne\z}^N\frac{|p(\x_j)|^2}{\|\x_j-\z\|_2^{2\mu}}\le1\Big\}.\\
&=\|\w^{2,\mu}\|_{2,\mu}=\Big(\sum_{j=1}^{N} (w_j^{2,\mu})^2\|\x_j-\z\|_2^{2\mu}\Big)^{1/2}.\end{split}
\end{align}
In this case Theorem~\ref{lsgft} leads to the following statement.

\begin{corollary}\label{pebtls}
Given a linear differential operator $D$ of order $k$, the  $\dnorm_{2,\mu}$-minimal formula  of order $q>k$
with the weight vector $\w^{2,\mu}$, $\mu\ge0$, satisfies for any domain $\Omega$ containing the set $S_{\z,\X}=\bigcup_{i=1}^N[\z,\x_i]$
and any $r=k,\ldots,q-1$, and $\gamma\in(0,1]$, the error bound
\begin{equation}\label{lsb}
|D f(\z)-\sum_{j=1}^{N} w_j^{2,\mu}\, f( \x_j )|\le
\Big(\sum_{j=1\atop \x_j\ne\z}^{N} \|\x_j-\z\|_2^{2(r+\gamma-\mu)}\Big)^{1/2}
\rho_{q,D}(\z,\X,2,\mu)
\,|f|_{r,\gamma,\Omega}.
\end{equation}
In particular, for $\mu=r+\gamma$ we have
\begin{equation}\label{lsbwo}
|D f(\z)-\sum_{j=1}^{N} w_j^{2,r+\gamma}f( \x_j )|\le
\sqrt{N}\,
\rho_{q,D}(\z,\X,2,r+\gamma)
\,|f|_{r,\gamma,\Omega}.
\end{equation}
\end{corollary}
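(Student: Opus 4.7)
The plan is to apply Theorem~\ref{lsgft} with the particular choice $\theta_j=\|\x_j-\z\|_2^{-2\mu}$, using the convention established just before that theorem that $\theta_j=+\infty$ when $\x_j=\z$ and $\mu>0$ (which forces $w_j^{2,\mu}=\infty^{-1}\cdot$-terms to be treated as vanishing after passage to the seminorm). First I would verify that under this choice, $\theta_j^{-1}=\|\x_j-\z\|_2^{2\mu}$, so that the seminorm $\dnorm_{2,\btheta^{-1}}$ coincides with $\dnorm_{2,\mu}$ as defined just after \eqref{2mudual}. Consequently, the least squares formula \eqref{ndiflw} corresponding to this $\btheta$ is precisely the $\dnorm_{2,\mu}$-minimal formula with weight vector $\w^{2,\mu}$, and its growth function is $\rho_{q,D}(\z,\X,2,\mu)$.

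Second, I would compute the auxiliary factor appearing in Theorem~\ref{lsgft}:
$$
\sum_{j=1}^{N}\theta_j\|\x_j-\z\|_2^{2(r+\gamma)}
=\sum_{j=1\atop \x_j\ne\z}^{N}\|\x_j-\z\|_2^{2(r+\gamma-\mu)},
$$
where terms with $\x_j=\z$ drop out by the convention above (and also do not contribute to the left-hand side of \eqref{lsb} since the corresponding weight vanishes). Substituting this expression and $\rho_{q,D}(\z,\X,\dnorm_{2,\btheta^{-1}})=\rho_{q,D}(\z,\X,2,\mu)$ into the bound provided by Theorem~\ref{lsgft} yields \eqref{lsb} directly.

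Finally, for the special case $\mu=r+\gamma$, each exponent $2(r+\gamma-\mu)$ equals $0$, so every nonvanishing summand equals $1$. The number of nonvanishing terms is at most $N$, giving
$$
\Big(\sum_{j=1\atop \x_j\ne\z}^{N}\|\x_j-\z\|_2^{0}\Big)^{1/2}\le\sqrt{N},
$$
and \eqref{lsbwo} follows immediately from \eqref{lsb}.

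The only mildly delicate point is bookkeeping around centers that coincide with $\z$; this is handled by the paper's convention that the corresponding $\theta_j$ becomes infinite and the associated weight vanishes, which is precisely what justifies restricting the sum in \eqref{lsb} to $\x_j\ne\z$. No other obstacles are expected since everything reduces to specializing Theorem~\ref{lsgft} and using the identification $\dnorm_{2,\btheta^{-1}}=\dnorm_{2,\mu}$.
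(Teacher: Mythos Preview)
Your proposal is correct and follows exactly the paper's route: specialize Theorem~\ref{lsgft} with $\theta_j=\|\x_j-\z\|_2^{-2\mu}$, identify $\dnorm_{2,\btheta^{-1}}=\dnorm_{2,\mu}$, and simplify the factor $\sum_j\theta_j\|\x_j-\z\|_2^{2(r+\gamma)}$. One small inaccuracy in your bookkeeping: when $\x_j=\z$ and $\theta_j=\infty$, the weight $w_j^{2,\mu}$ does \emph{not} vanish (that is the case $\theta_j=0$; here $\theta_j=\infty$ imposes an interpolation condition instead)---the $j$-th term drops from the factor simply because the dual seminorm $\dnorm_{2,\btheta}$ loses that term by the paper's convention, not because of anything on the left-hand side, but this does not affect your derivation of \eqref{lsb} or \eqref{lsbwo}.
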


We can also estimate the error of the $\dnorm_{2,\mu}$-minimal least squares formulas
with the help of the $\dnorm_{1,\mu}$ growth function.
Indeed, from the inequality $\|\w\|_{2,\mu}\le\|\w\|_{1,\mu}\le\sqrt{N}\|\w\|_{2,\mu}$
it follows that
\begin{equation}\label{rho12}
\rho_{q,D}(\z,\X,2,\mu)\le\rho_{q,D}(\z,\X,1,\mu)\le\sqrt{N}\rho_{q,D}(\z,\X,2,\mu).
\end{equation}
Hence, \eqref{lsbwo} implies a bound for the error of the $\dnorm_{2,r+\gamma}$-minimal 
formulas that is only by a factor of $\sqrt{N}$ worse than the estimate \eqref{peb1} for the 
$\dnorm_{1,r+\gamma}$-minimal formulas.

Moreover, Theorem~\ref{pebt} implies in view of \eqref{rho12} that the error of 
the $\dnorm_{1,\mu}$-minimal formulas of exactness order $q$ can be estimated with the help of the 
$\dnorm_{2,\mu}$ growth function,
\begin{equation}\label{peb123s1}
|D f(\z)-\sum_{j=1}^{N} w_j^{1,\mu}\, f( \x_j )|\le
\sqrt{N}\rho_{q,D}(\z,\X,2,\mu)\,\max\big\{  h_{\z,\X}^{r+\gamma-\mu},  s_{\z,\X}^{r+\gamma-\mu}\big\}
\,|f|_{r,\gamma,\Omega},
\end{equation}
where $h_{\z,\X}$ is defined in \eqref{hzXdef}, and $s_{\z,\X}$ in \eqref{szX}.

In particular, for any $f\in W^q_\infty(\Omega)$, where $\Omega$ contains the convex hull of 
$\{\z,\x_1,\ldots,\x_N\}$,  \eqref{lsbwo} and \eqref{peb123s1} imply that both $\dnorm_{1,q}$-minimal
and $\dnorm_{2,q}$-minimal formulas of exactness order $q$ satisfy the same error bound
\begin{equation}\label{peb1s1}
|D f(\z)-\sum_{j=1}^{N} w_j^{1,q}f( \x_j )|\le
\sqrt{N}\,\rho_{q,D}(\z,\X,2,q)
\,|f|_{\infty,q,\Omega},
\end{equation}
\begin{equation}\label{lsbwoa}
|D f(\z)-\sum_{j=1}^{N} w_j^{2,q}f( \x_j )|\le
\sqrt{N}\,\rho_{q,D}(\z,\X,2,q)
\,|f|_{\infty,q,\Omega}.
\end{equation}

Numerical experiments in Section~\ref{numexp} suggest that the accuracy of the $\dnorm_{2,\mu}$-minimal  formulas
is close to that of the $\dnorm_{1,\mu}$-minimal formulas. Their weight vectors $\w$ can be more efficiently computed,
but they are not sparse.

\begin{remark}\label{rho2}\rm 
A remarkable feature of \eqref{peb123s1} and \eqref{peb1s1} is that these error bounds do not rely on the knowledge of the
$\dnorm_{1,\mu}$-minimal formula, so that its quality can be assessed by computing $\rho_{q,D}(\z,\X,2,\mu)$ and other
ingredients of \eqref{peb123s1}--\eqref{peb1s1} without resorting to \red{the relatively} expensive $\ell_1$-minimization. 
The latter will only be needed to compute  a sparse $\dnorm_{1,\mu}$-minimal formula 
after good values for $q$ and $\mu$ have been found based on the estimates. Such sparse formulas are
of interest for the generalized finite difference methods 
 since
they lead to sparser system matrices. Since the error of kernel-based numerical differentiation 
formulas is also bounded in terms of $\rho_{q,D}(\z,\X,1,q)$ \cite[Theorem 9]{DavySchaback16},
the inequalities \eqref{rho12} imply a bound in terms of the more efficiently computable $\rho_{q,D}(\z,\X,2,q)$ for these formulas as well,
which can be used in the kernel-based generalized finite difference methods to improve the algorithms for the selection of
local point sets that generate numerical differentiation formulas \cite{DavyOanh11,DavyOanh11sp,ODP17}.
\end{remark}

\begin{remark}\rm
Note that the stability constant $\|\w^{2,\theta}\|_1$ of the formula
\eqref{ndiflw} has an alternative interpretation as the 
\emph{Lebesgue function} $\cL^\theta_{q,D}(\z,\X)$ of the differentiated least squares operator $DL^\theta_{\X,q}$ because 
\begin{equation}\label{lebfls}
\cL^\theta_{q,D}(\z,\X):=\sup%
\{|DL^\theta_{\X,q}f(\z)|:%
|f(\x_i)|\le 1\}=\|\w^{2,\theta}\|_1.
\end{equation}
Similar to certain estimates of the \emph{Lebesgue constant} 
$\cL^\theta_{q,D}(\Omega,\X):=\sup_{\z\in\Omega}\cL^\theta_{q,D}(\z,\X)$ proposed in \cite{Davy02} the formula
\eqref{lebfls} can be employed to determine suitable degrees of local polynomial approximations in two-stage scattered
data fitting algorithms  \cite{DPR2014,DavyZeil04}.
\end{remark}

\red{
\begin{remark}\rm
Although $\dnorm_{2,\mu}$-minimal formulas are not sparse, they may be preferred over 
$\dnorm_{1,\mu}$-minimal formulas because they are cheaper to compute. Indeed, their computation requires solving
just one positive definite linear system %of the size $\dim\Pi^d_q\times\dim\Pi^d_q$ 
if using the method of normal
equations, see Section 6.1, whereas $\ell_1$-minimization is done by iterative
algorithms such as Simplex Method. (Note however that \cite{Seibold10} provides numerical evidence of competitive cost performance of
Simplex Method for low order formulas with $q=3$.) Another advantage of the least squares formulas, 
relevant for the numerical stability and certain applications, is the continuous dependence of the weight vector $\w$ on the point positions $\X$,
which does not hold for the $\ell_1$--minimal formulas as observed in Remarks 1 and 2 of \cite{Seibold08}.
\end{remark}
}

\section{Numerical Experiments} \label{numexp}

\subsection{Computation of weight vectors}\label{compute}

Since $\Pi^d_q$ is shift-invariant, the polynomial exactness condition \eqref{qexact} for a vector $\w$ is equivalent to
$$
D_zp(\oo)=\sum_{j=1}^N w_jp(\x_j-\z) \quad\hbox{for all}\quad  p\in \Pi^d_{q},$$
where $\oo$ is the origin in $\RR^d$, and for any operator $D$ given by \eqref{Dop} the shifted operator $D_\z$ is defined by
$$
D_\z f(\x):=\sum_{|\alpha|\le k}c_\alpha(\x+\z)\partial^\alpha f(\x).$$
Hence we can always use $\z=\oo$ in the implementation of a $\dnorm$-minimal formula if we replace $\X$ by
$\X-\z$ and $D$ by $D_\z$.  This allows the use of
simple monomials for a basis of polynomials. 

Thus, for a $\dnorm_{1,\mu}$-minimal formula
we arrive at the following linear programming problem:
find $\w^{1,\mu}\in\RR^N$ that minimizes
$$
\sum_{j=1}^N|w_j|\|\tilde\x_j\|_2^\mu,\qquad \tilde \x_j:=\x_j-\z,
$$
subject to  the constraints
$$
\sum_{j=1}^Nw_j \tilde\x_j^\alpha =D_\z\x^\alpha|_{\x=\oo}=\alpha!\,c_\alpha(\z) ,\qquad |\alpha|< q.$$

However, for the sets $\X$ with small $h_{\z,\X}$ the matrix $[\tilde\x_j^\alpha]_{j,\alpha}$ is extremely ill-conditioned.
Therefore, we make use of the scalability of the $\dnorm$-minimal formulas, see Remark~\ref{scalable}.
Thanks to this property, for a \emph{homogenous} operator $D$ of order $k$, the vector $\w$ can be obtained 
by scaling $\w=h^{-k}_{\z,\X}\bv$ from the weight
vector $\bv\in\RR^N$ that solves the following problem:
\begin{equation}\label{wl1min}
\sum_{j=1}^N|v_j|\|\y_j\|_2^\mu\to\min%
\quad\text{subject to}\quad\frac{1}{\alpha!}\sum_{j=1}^Nv_j \y_j^\alpha =
c_\alpha(\z),\quad |\alpha|< q,
\end{equation}
where $\y_j:=h_{\z,\X}^{-1}(\x_j-\z)$, $j=1,\ldots,N$.

We refer the reader to the extensive literature on the algorithms for the \emph{basis pursuit}, the name often used for
the $\ell_1$ minimization problem, of which \eqref{wl1min} is a special case. 
Specifically in the area of Compressed Sensing  there is a high demand for such algorithms delivering sparse solutions.
In our experiments we obtain sparse weights $\bv$ with 
 $v_j=0$, $j\notin I$, for some $I\subset\{1,\ldots,N\}$ by MATLAB  command
{\tt linprog} by specifying `dual-simplex' as algorithm.
Since MATLAB's Optimization Toolbox currently only works in double precision, we recompute the weights in  
the variable-precision arithmetic by solving the constraint equations for 
$v_j$, $j\in I$, should a higher accuracy be needed.

Since $\dnorm_{2,\mu}$-minimal formulas are also scalable, the computation of their weight vectors $\w$ is more stable numerically if we rescale the set $\X$ 
into the unit disk as $\Y:=h_{\z,\X}^{-1}(\X-\z)$, obtain the weight vector $\bv$ of the formula
$$
D_\z L^\btheta_{\Y,q}f(\oo)=\sum_{j=1}^{N} v_j f( \y_j ),\quad\text{with}\quad \theta_j=\|\y_j\|_2^{-2\mu},\quad j=1,\ldots,N,$$
and then scale back to arrive at $\w=h^{-k}_{\z,\X}\bv$. To see how  $\bv$ can be computed, write the polynomial $p=L^\btheta_{\Y,q}f\in\Pi^d_q$
in the monomial basis,
$$
p(\y)=\sum_{|\alpha|<q}b_\alpha\y^\alpha,\quad b_\alpha\in\RR.$$

Assuming that $\oo\notin\Y$, the vector $\b=[b_\alpha]_\alpha$ is the unique  solution of the problem
$$
\|W\b-f_\Y\|_{2,\btheta}\to\min,\qquad W:=[\y_j^\alpha]_{j,\alpha},\quad f_\Y:=[f(\y_1),\ldots,f(\y_N)]^T.$$
(Note that $W$ has full rank since $\Y$ is unisolvent for $\Pi^d_q$.)
Hence 
$$
\b=(\delta W)^+\delta f_\Y,$$
where $\delta:=\diag(\sqrt{\theta_1},\ldots,\sqrt{\theta_N})$ and $A^+$ denotes the Moore-Penrose pseudoinverse of a matrix $A$.
Since
$$
D_\z p(\oo)=\sum_{|\alpha|<q}b_\alpha D_\z\y^\alpha|_{\y=\oo}=[\alpha!\,c_\alpha(\z)]_\alpha^T\, \b,$$
we arrive at the formula
$$ %\begin{equation}\label{lsv}
\bv^T=[\alpha!\,c_\alpha(\z)]_\alpha^T(\delta W)^+\delta. 
$$ %\end{equation}

In the case $\oo\in\Y$ assume without loss of generality that $\y_1=\oo$. Then $\theta_1=\infty$ and
hence $b_\oo=f(\y_1)$ and $\tilde\b:=[b_\alpha]_{\alpha\ne\oo}$ is the unique  solution of the least squares problem
$$
\|\tilde W\tilde\b-\tilde f_\Y\|_{2,\btheta}\to\min,$$
where
$$
\tilde W:=[\y_j^\alpha]_{j\ne 1,\alpha\ne\oo},\quad \tilde f_\Y:=[f(\y_2)-f(\y_1),\ldots,f(\y_N)-f(\y_1)]^T,$$
with $\tilde W$ necessarily a full rank matrix. Then 
\begin{align*}
D_\z p(\oo)&=c_\oo(\z) f(\y_1)+[\alpha!\,c_\alpha(\z)]_{\alpha\ne\oo}^T\, \tilde\b\\
&=c_\oo(\oo) f(\y_1)+[\alpha!\,c_\alpha(\z)]_{\alpha\ne\oo}^T\,(\tilde\delta\tilde W)^+\tilde\delta \tilde f_\Y,
\end{align*}
with $\tilde\delta:=\diag(\sqrt{\theta_2},\ldots,\sqrt{\theta_N})$, which shows that 
\begin{align*} %\label{lsv0}
%\begin{split}
[v_2,\ldots,v_N]&=[\alpha!\,c_\alpha(\z)]_{\alpha\ne\oo}^T\,(\tilde\delta\tilde W)^+\tilde\delta, \\
v_1&=c_\oo(\z)-\sum_{j=2}^N v_j.
%\end{split}
\end{align*}

The pseudoinverse of $\delta W$ or  $\tilde\delta\tilde W$ can be found by using either the singular value decomposition, the
QR-factorization or the normal equations \cite{Stewart98}. The last method leads to a simple formula 
$A^+=(A^T\!A)^{-1}A^T$, but requires 
solving \red{a linear system with the matrix $A^T\!A$ whose condition number is the square of the condition number
of $A$, which is undesirable if $A$ is already ill-conditioned. We} 
% linear systems with the potentially highly ill-conditioned 
% matrix $A^T\!A$. Therefore we 
are using the singular value decomposition in the
numerical experiments as the most reliable method.

Once the weight vector $\w$ of a $\dnorm_{2,\mu}$-minimal formula of order $q$ has been computed, the
growth function $\rho_{q,D}(\z,\X,2,\mu)$ can be easily evaluated by \eqref{2mudual}.

\subsection{Comparison of numerical differentiation formulas}
To compare various numerical differentiation formulas we follow the same approach as in \cite{DavySchaback16}, 
and use  the worst case error on Sobolev spaces $H^\rho(\RR^d)=W^\rho_2(\RR^d)$ as the main accuracy measure. 
By Sobolev theorem, under certain restrictions on $\Omega$,
$H^\rho(\Omega)$ is embedded in $C^{r,\gamma}(\Omega)$ if $\rho-d/2=r+\gamma$ with $\rho\in\ZZ_+$ 
and $\gamma\in(0,1)$. Therefore the estimates obtained in this paper are applicable to
$f\in H^\rho(\Omega)$ if  $\rho>d/2$ and $\rho-d/2\notin\NN$. The reason to deal with the spaces $H^\rho(\Omega)$ 
rather than $C^{r,\gamma}(\Omega)$ is that the
worst case error of the formula \eqref{ndif} as well as the optimal recovery error are easily computable for them.

Recall that  the space $H^\rho(\RR^d)$ with the norm 
$$
\|f\|_{H^\rho(\RR^d)}
:=(2\pi)^{-d/4}\Big(\int_{\RR^d}|\hat f(\bomega)|^2(1+\|\bomega\|_2^2)^\rho\,\d\bomega\Big)^{1/2}
$$
in the case $\rho>d/2$ coincides with the \emph{native space} (see e.g.\ \cite{wendland:2005-1}) of the \emph{Mat\'ern kernel}
$$
M_{\rho,d}(\x):=\frac{\mathcal{K}_{\rho-d/2}(\|\x\|_2)\|\x\|_2^{\rho-d/2}}{2^{\rho-1}\Gamma(\rho)},\quad \rho>d/2,$$
where $\mathcal{K}_{\nu}$ denotes the modified Bessel function of second kind.
It follows (see \cite{DavySchaback16}) that the \emph{worst case error} of a numerical differentiation formula \eqref{ndif} on 
the unit ball of $H^\rho(\RR^d)$ can be computed by
\begin{equation}\label{optest}
\sup_{\|f\|_{H^\rho(\RR^d)}\le 1}|D f(\z)-\sum_{j=1}^Nw_jf(x_j)|=\sqrt{Q_{D,\X}(\w)},
\end{equation}
where %
\begin{equation}\label{QDX}
Q_{D,\X}(\w):=D\tilde D M_{\rho,d}(\oo)-2\sum_{j=1}^Nw_jD M_{\rho,d}(\z-\x_j)
+\sum_{i,j=1}^Nw_iw_jM_{\rho,d}(\x_i-\x_j),
\end{equation}
with
$$
\tilde D f:=D f(-\cdot)=\sum_{|\alpha|\le k}(-1)^{|\alpha|}c_\alpha\partial^\alpha f.$$
The \emph{optimal recovery error}
$$ %\begin{equation}\label{optrec}
\inf_{\w\in\RR^N}\sup_{\|f\|_{H^\rho(\RR^d)}\le 1}|D f(\z)-\sum_{j=1}^Nw_jf(x_j)|=\sqrt{Q_{D,\X}(\w^*)}
$$ %\end{equation}
is attained by the kernel-based numerical differentiation formula with weight vector
 $\w^{*}$ generated by the Mat\'ern kernel $M_{\rho,d}$, see \cite[Section 2]{DavySchaback16}.

To circumvent the effect of rounding errors which have especially bad influence
on \eqref{QDX} due to the double differencing nature of $Q_{D,\X}(\w)$, we compute the weights $\w$ 
for small $h$ using the variable-precision arithmetic (up to 64 digits) of MATLAB Symbolic Math Toolbox.

We will compare  the errors of various formulas for the 
numerical differentiation of the Laplacian 
$Df=\Delta f=\frac{\partial^2f}{\partial x_1^2}+\frac{\partial^2f}{\partial x_2^2}$ in two variables
evaluated at the origin $\z=\oo$. We consider the worst case errors on the unit ball of the 
Sobolev space $H^\rho(\RR^2)$ evaluated by $\sqrt{Q_{D,\X}(\w)}$ derived from the kernel $M_{\rho,2}$, and  the errors 
\begin{equation}\label{testerr}
\Big|\Delta f_i(\z)-\sum_{j=1}^Nw_jf_i(\x_j)\Big|,\qquad i=1,2,
\end{equation}
for two test functions described below. 

The first test function is given by
$$
f_1(\x):=\phi_{3,2}(\|\x\|_2)(x_1+x_2)+\phi_{3,3}(\|\x\|_2),$$
where $\phi_{3,2}(r)=(1-r)_+^6(35r^2+18r+3)$ and $\phi_{3,3}(r)=(1- r)_+^8(32r^3 + 25r^2 +8r +1)$
are compactly supported radial basis function
of  Wendland's family \cite{wendland:2005-1}.
It is easy to check that 
$f_1\in C^{5,1}(\RR^2)= W^6_\infty(\RR^2)$, but its 6th order derivatives
are discontinuous at the origin, so that we consider it as a typical representative of the space $C^{5,1}(\Omega)$ 
when evaluating its derivatives at the origin. Note that $f_1\notin H^7(\RR^2)$ but nevertheless
 $f_1\in W^7_p(\RR^2)$ for all $p<2$.
 
Our second test function is
$$
f_2(\x):=e^{x_1+x_2}.$$ 
It is infinitely differentiable and it is easy to see that
$$
|f_2|_{\infty,m,\Omega}=\frac{2^{m/2}}{m!}\|f_2\|_{L^\infty(\Omega)},\quad m=0,1,\ldots,$$
and 
$$
\inf\big\{\|f_2\|_{L^\infty(\Omega)}:\Omega\supset S_{\oo,\X}\big\}=\max_{\x\in\X}f_2(\x),$$
which allows explicit computation of the error bounds that include the factor $|f|_{\infty,m,\Omega}$.

\subsection{Errors of minimal formulas of various exactness orders}\label{exp1}

Our first goal is to see how close we can get to the optimal recovery error by using minimal formulas of the
types considered in Sections~\ref{ell1} and \ref{ell2}. As already observed in the experiments in \cite{DavySchaback16}, 
weighted $\ell_1$-minimal formulas can compete rather well with the optimal recovery formulas obtained by kernel-based
numerical differentiation if their exactness order $q$ is appropriately chosen. We now confirm a similar behavior of the 
formulas generated by weighted least squares.

We consider three sets $\hat\X_i\subset[-1,1]^2$, $i=1,2,3$, introduced in \cite{DavySchaback16},
 each consisting of 32 points  containing the origin, and generate 
numerical differentiation centers by scaling %
$\X_i^h=h\hat\X_i$, where  $h=2^{-n}$, $n=0,\ldots,9$. 
The set $\hat\X_1$ consists of the origin and 31 random points in $[-1,1]^2$ drawn from the uniform 
distribution, $\hat\X_2$ includes 32 points on a straight line, a hyperbola and an
ellipse, perturbed (except of the point at the origin) randomly 
by at most $10^{-6}$ in both coordinate directions, and $\hat\X_3$ includes 32 points on three parallel
straight lines, perturbed in the same way, see Figures~1 and 2 in \cite{DavySchaback16}. 
Since 32 lies between $\dim \Pi^2_7=28$ and $\dim \Pi^2_8=36$, we consider formulas with exactness order
$q\le7$. For simplicity, we choose the weight exponent $\mu=q$, leaving the experiments with varying $\mu$ to
Section~\ref{expmu}.

In Figure~\ref{H7L1LS} we compare the worst case error \eqref{optest} of the numerical differentiation of the Laplacian on $H^7(\RR^2)$ for the 
$\dnorm_{1,q}$ and $\dnorm_{2,q}$-minimal formulas of
exactness order $q=3,\ldots,7$. Figure~\ref{f1L1LS} presents the comparison of the actual error \eqref{testerr} 
of the same formulas for the test function $f_1$. The error of the numerical differentiation using the weights
generated by the Mat\'ern kernel $M_{7,2}$ is also included, leading in Figure~\ref{H7L1LS} to the 
optimal recovery error. Note that the error of the $\dnorm_{1,7}$-minimal formulas of exactness order 7 is not shown for the
sets $\X_3^h$ because {\tt linprog} fails to compute the weight vector in this case. Nevertheless, the error of the
corresponding $\dnorm_{2,7}$-minimal formulas is very big, and the same is expected from  the $\dnorm_{1,7}$-minimal 
formulas.

\begin{figure}[htbp!]
\subfigure[$\dnorm_{1,q}$-minimal on $\X_1^h$]
{\includegraphics[width=.5\textwidth]{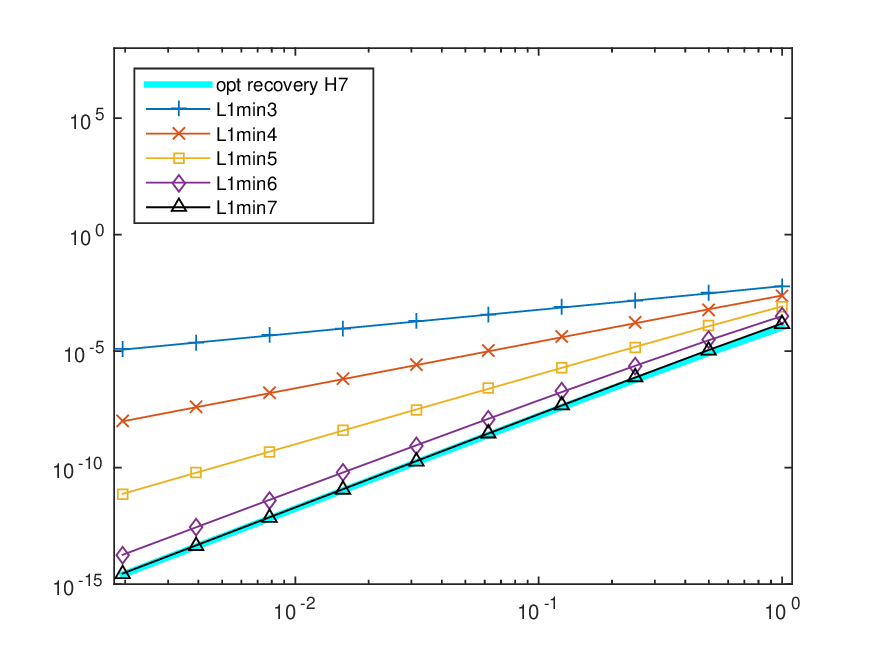}\label{random32H7L1}} 
\subfigure[$\dnorm_{2,q}$-minimal on $\X_1^h$]
{\includegraphics[width=.5\textwidth]{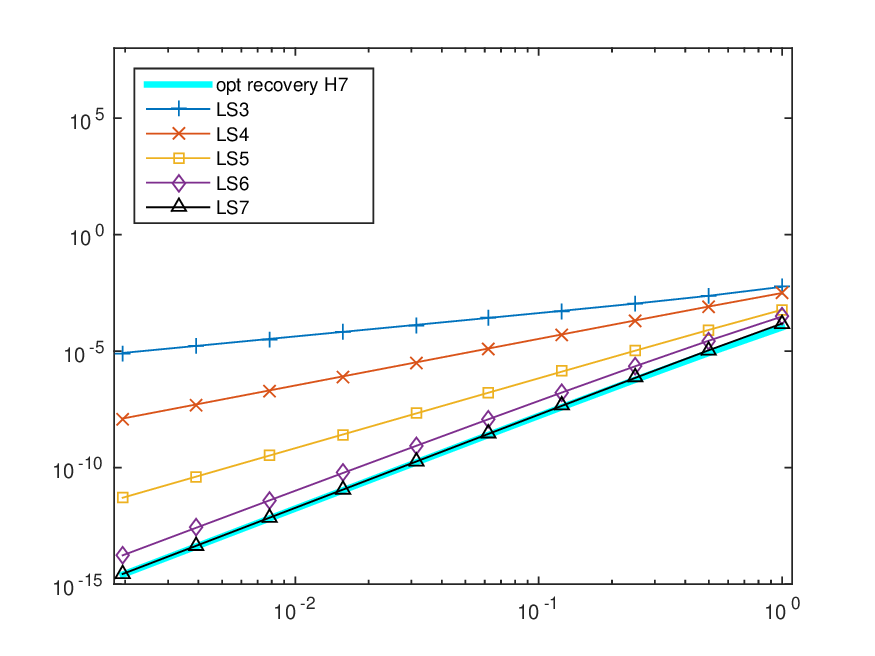}\label{random32H7LS}} 
\subfigure[$\dnorm_{1,q}$-minimal on $\X_2^h$]
{\includegraphics[width=.5\textwidth]{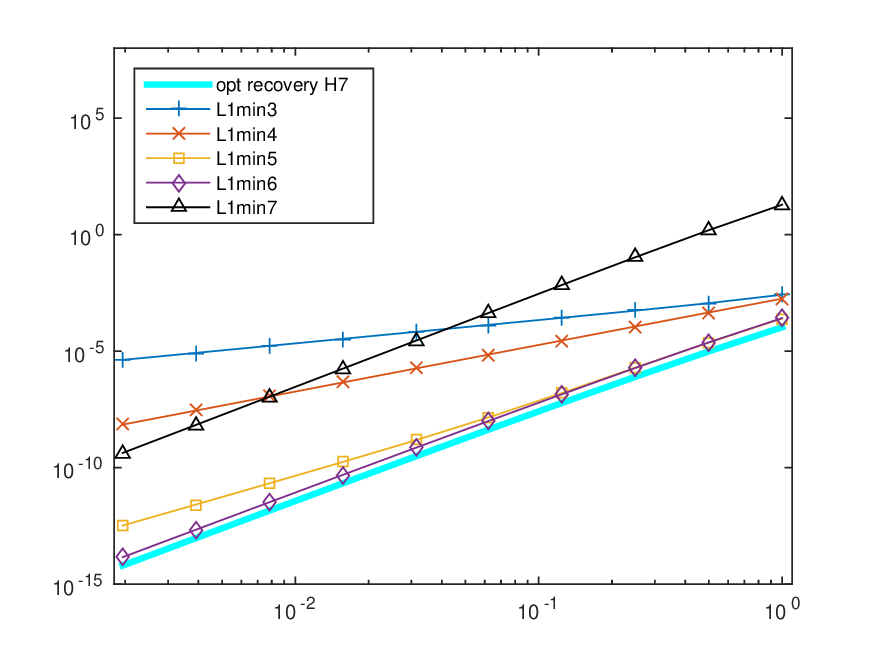}\label{she32H7L1}} 
\subfigure[$\dnorm_{2,q}$-minimal on $\X_2^h$]
{\includegraphics[width=.5\textwidth]{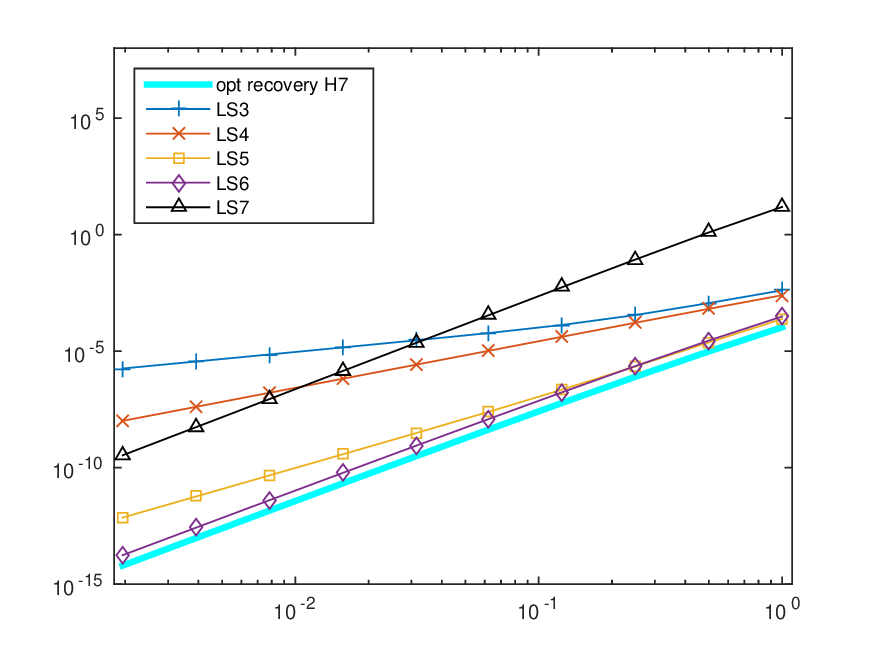}\label{she32H7LS}} 
\subfigure[$\dnorm_{1,q}$-minimal on $\X_3^h$]
{\includegraphics[width=.5\textwidth]{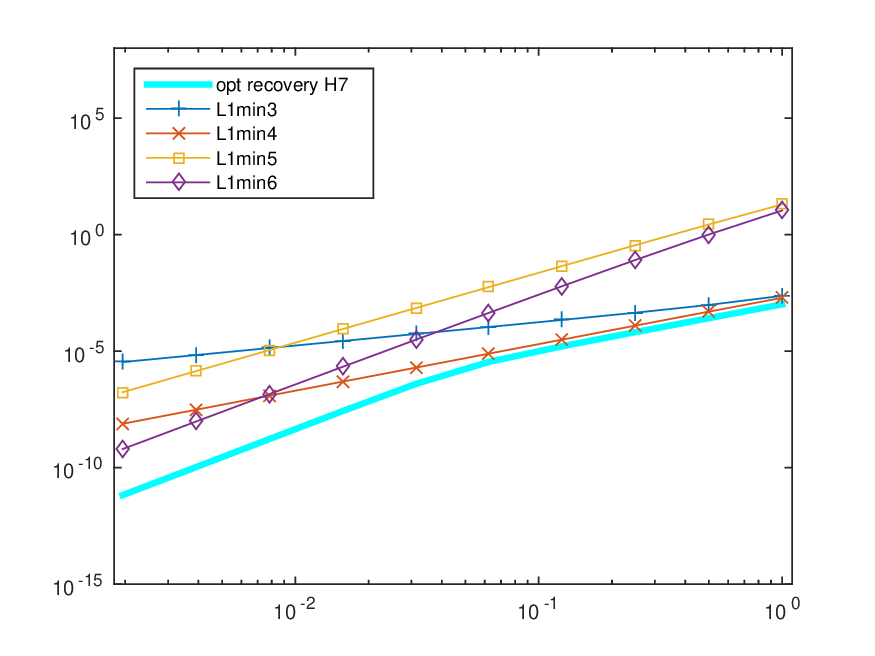}\label{3lines32H7L1}} 
\subfigure[$\dnorm_{2,q}$-minimal on $\X_3^h$]
{\includegraphics[width=.5\textwidth]{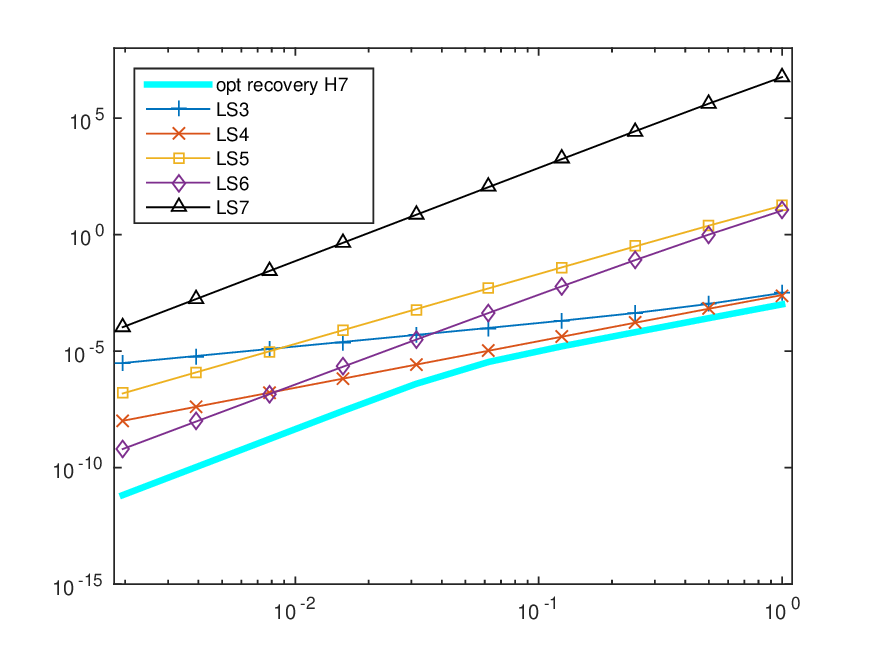}\label{3lines32H7LS}} 
\caption{Error of numerical differentiation of the Laplacian by weighted
$\ell_{1}$-minimal formulas (left) and by the least squares formulas (right)
on the Sobolev space $H^7(\RR^2)$ using centers in $\X^h_i$, $i=1,2,3$, as function of $h$.
The error of optimal recovery ({\tt opt recovery H7}) obtained with the Mat\'ern kernel $M_{7,2}$ 
is included for comparison.
{\tt L1min[q]} (resp.\ {\tt LS[q]}): $\dnorm_{1,\mu}$-minimal (resp.\ $\dnorm_{2,\mu}$-minimal)  
formula with exactness order $q$ and weight exponent $\mu=q$.
}\label{H7L1LS}
 \end{figure}

\begin{figure}[htbp!]
\subfigure[$\dnorm_{1,q}$-minimal on $\X_1^h$]
{\includegraphics[width=.5\textwidth]{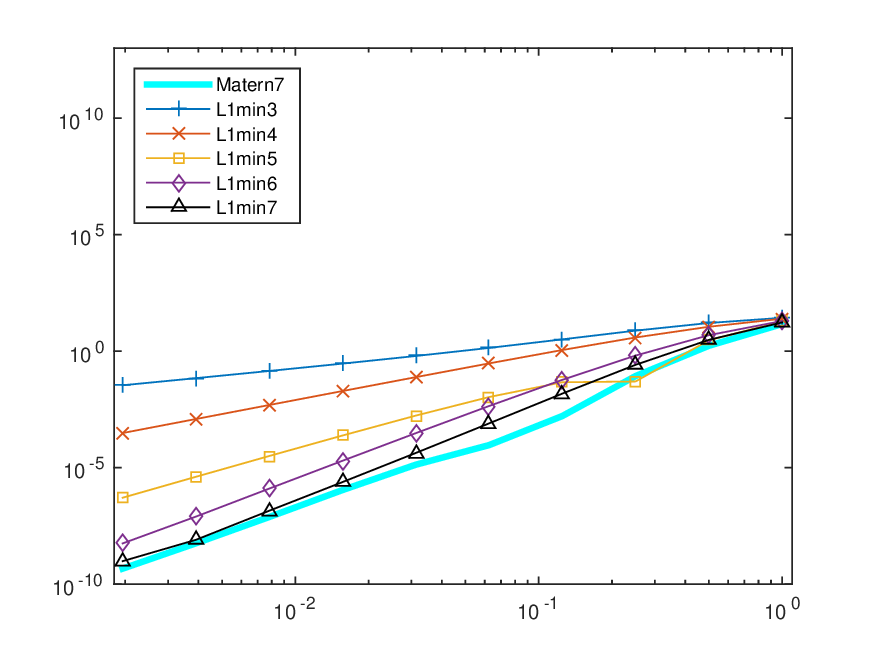}\label{random32f0L1}} 
\subfigure[$\dnorm_{2,q}$-minimal on $\X_1^h$]
{\includegraphics[width=.5\textwidth]{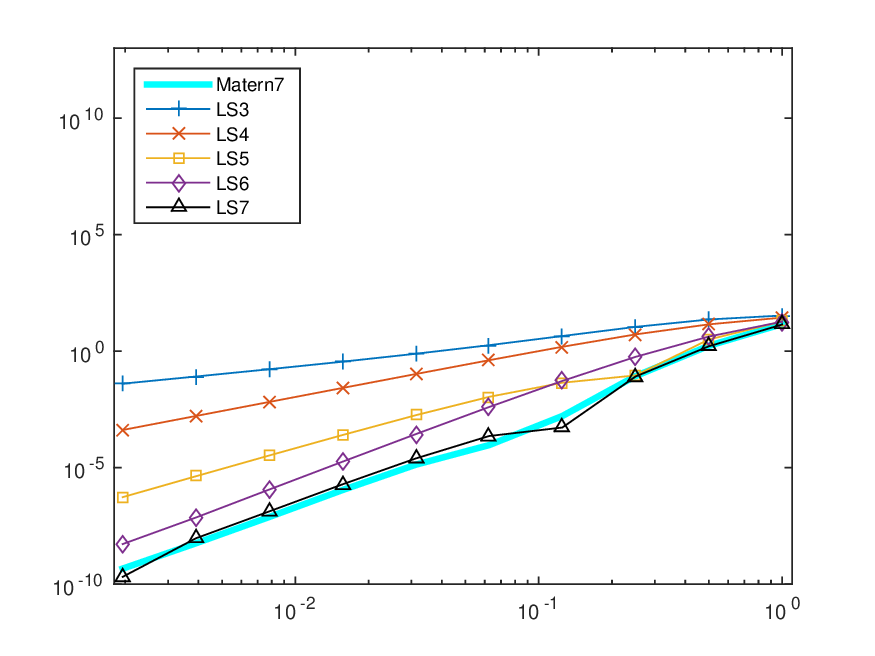}\label{random32f0LS}} 
\subfigure[$\dnorm_{1,q}$-minimal on $\X_2^h$]
{\includegraphics[width=.5\textwidth]{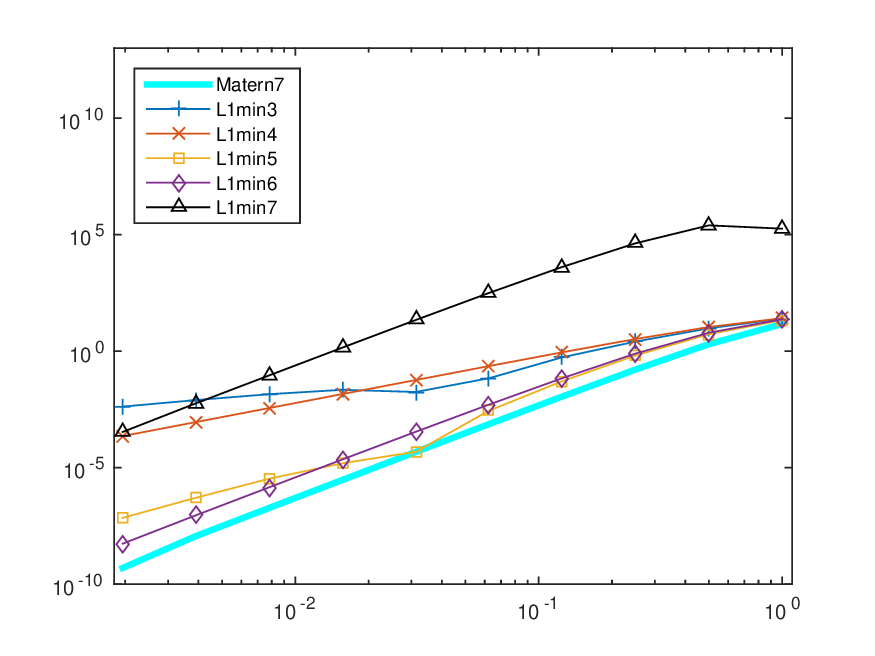}\label{she32f0L1}} 
\subfigure[$\dnorm_{2,q}$-minimal on $\X_2^h$]
{\includegraphics[width=.5\textwidth]{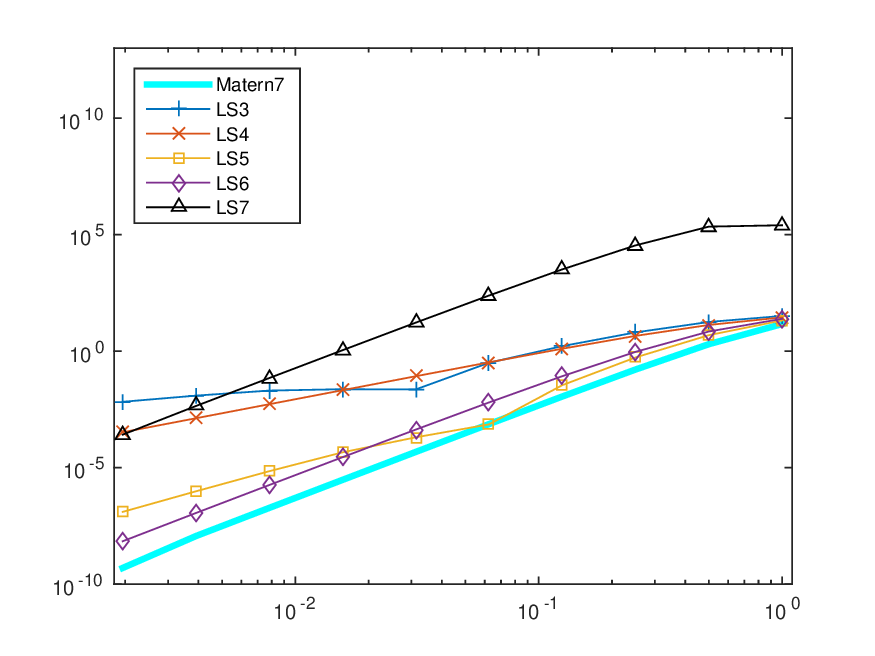}\label{she32f0LS}} 
\subfigure[$\dnorm_{1,q}$-minimal on $\X_3^h$]
{\includegraphics[width=.5\textwidth]{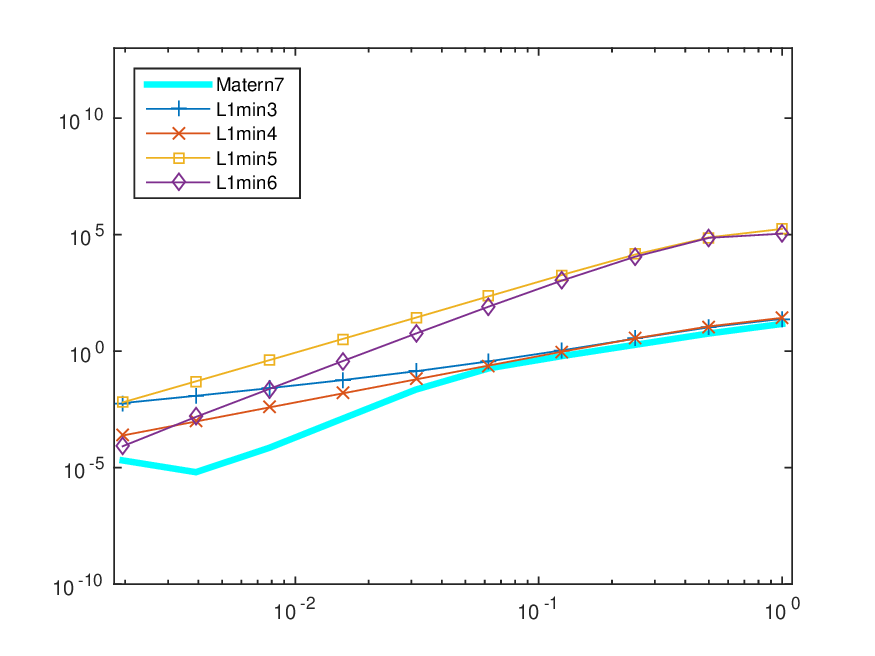}\label{3lines32f0L1}} 
\subfigure[$\dnorm_{2,q}$-minimal on $\X_3^h$]
{\includegraphics[width=.5\textwidth]{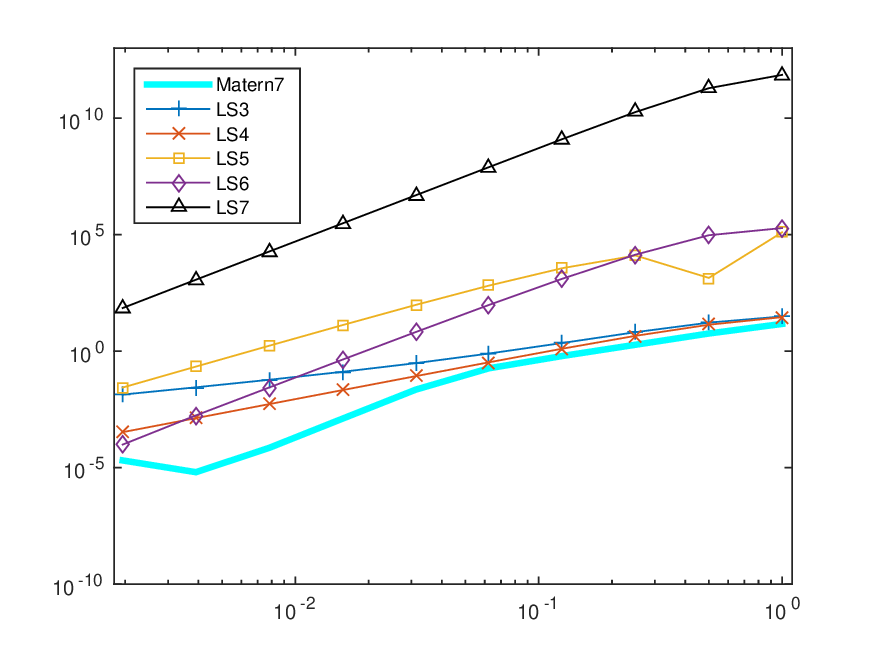}\label{3lines32f0LS}} 
\caption{Error of numerical differentiation of the Laplacian  by weighted
$\ell_{1}$-minimal formulas (left) and by the least squares formulas (right)
for the test function $f_1$ using centers in $\X^h_i$, $i=1,2,3$, as function of $h$.
The curves correspond to the weights obtained by different methods. 
The error of the optimal recovery weights for the space $H^7(\RR^2)$ obtained with the Mat\'ern kernel $M_{7,2}$ 
is included for comparison ({\tt Matern7}).
{\tt L1min[q]} (resp.\ {\tt LS[q]}): $\dnorm_{1,\mu}$-minimal (resp.\ $\dnorm_{2,\mu}$-minimal)  
formula with exactness order $q$ and weight exponent $\mu=q$.
}\label{f1L1LS}
 \end{figure}

The results indicate that the errors of $\dnorm_{1,q}$-minimal and $\dnorm_{2,q}$-minimal formulas are close for both the worst case on $H^7(\RR^2)$ and for the test
function $f_1$. The best  convergence order as $h\to0$ achieved in these experiments is $h^4$ for all sets $\X^h_i$, $i=1,2,3$, and it is attained by the formulas of
polynomial exactness order $q=6$ or 7. 
The fact that the formulas of exactness order 7 do possess better convergence speed than those of order 6 is explained by the  finite smoothness of $f_1$ and 
$H^7(\RR^2)$, see  \cite{DavySchabackOPT}. There are significant differences between the sets $\X^h_1$, $\X^h_2$ and $\X^h_3$ with respect to the question which $q$ is
preferable, especially in the pre-asymptotic setting where $h$ is not
excessively small. For the random set $\X^h_1$ the largest possible
$q=7$ is the best for all $h$,
whereas it leads to very big errors for the other two sets $\X^h_2,\X^h_3$ where the points are placed close to algebraic curves of degree 5, respectively 3. The best
choice for $\X^h_2$ is $q=6$, with the errors of $q=5$ quite competitive for larger $h$. The situation is more complicated for $\X^h_3$, where $q=4$ is the best choice for
larger $h$, and the errors are comparable to those of the optimal recovery, whereas $q=6$ becomes better for smaller $h$, but the errors are nevertheless
significantly worse than  the optimal recovery. Note that the slope of the error of the Mat\'ern kernel changes, as $h$ decreases, from $h^2$ matching the slope of the error curve for
$q=4$, to  $h^4$ matching the slope of the error curve for $q=6$.

These results emphasize the need for a careful selection of the exactness order $q$ of a numerical differentiation formula,
which cannot always be made only on the basis of the number of points in the set $\X$, or their nearly uniform distribution.
(The set $\X_2$ fills out the square $[0,1]^2$ more uniformly than $\X_1$ does, see \cite[Figure 1]{DavySchaback16}.)  To see
how well our estimates in Sections~\ref{ell1} and \ref{ell2} can predict which $q$ leads to smaller errors, we present in
Figure~\ref{f2L1LS} the errors of the $\dnorm_{1,q}$-minimal and $\dnorm_{2,q}$-minimal formulas of exactness order $q$ for
the numerical differentiation of the Laplacian of the test function $f_2$,  together with the estimates of this error provided
by the inequalities \eqref{Cpeb} and \eqref{lsbwoa}/\eqref{peb1s1}. 
(Note that in these estimates we take the infimum over all $\Omega$ containing the convex hull of $\{\z,\x_1,\ldots,\x_N\}$.) 
Note that the estimate \eqref{peb1a} for the
$\dnorm_{1,q}$-minimal formulas  of exactness order $q$ coincides with \eqref{Cpeb}. The error of the $\dnorm_{1,7}$-minimal
formulas of exactness order 7 is not shown in Figure~\ref{f2L1LS}(e) because  {\tt linprog} fails to compute their weight
vectors, as mentioned above for Figures~\ref{H7L1LS} and \ref{f1L1LS}. 

The results in Figure~\ref{f2L1LS} show that both bounds \eqref{Cpeb} and \eqref{lsbwoa}/\eqref{peb1s1} correctly predict 
$q$ with the smallest error in most cases. However, the estimates seem to become less effective as $q$ increases.

\begin{figure}[htbp!]
\subfigure[$\dnorm_{1,q}$-minimal on $\X_1^h$]
{\includegraphics[width=.5\textwidth]{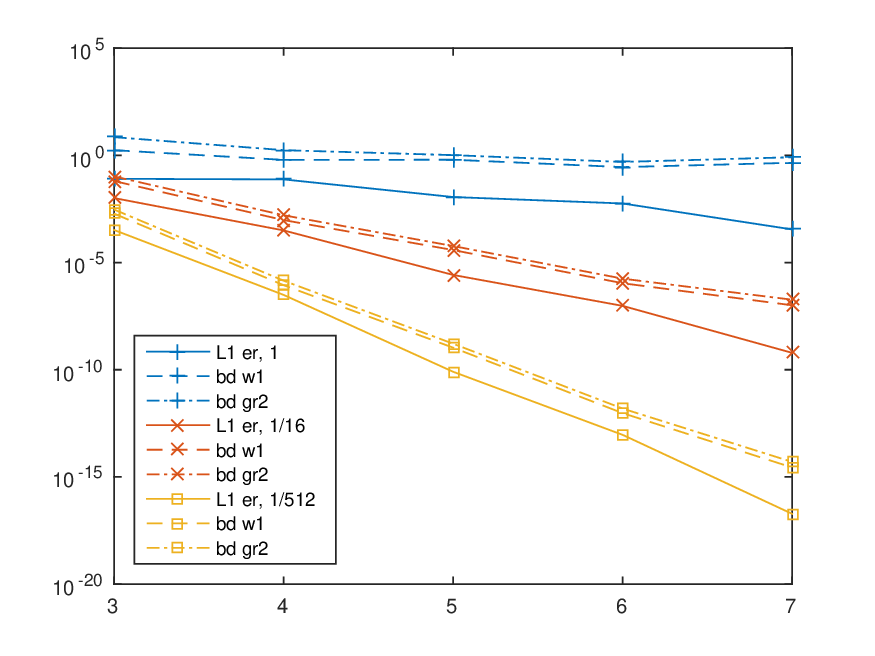}\label{random32f2L1}} 
\subfigure[$\dnorm_{2,q}$-minimal on $\X_1^h$]
{\includegraphics[width=.5\textwidth]{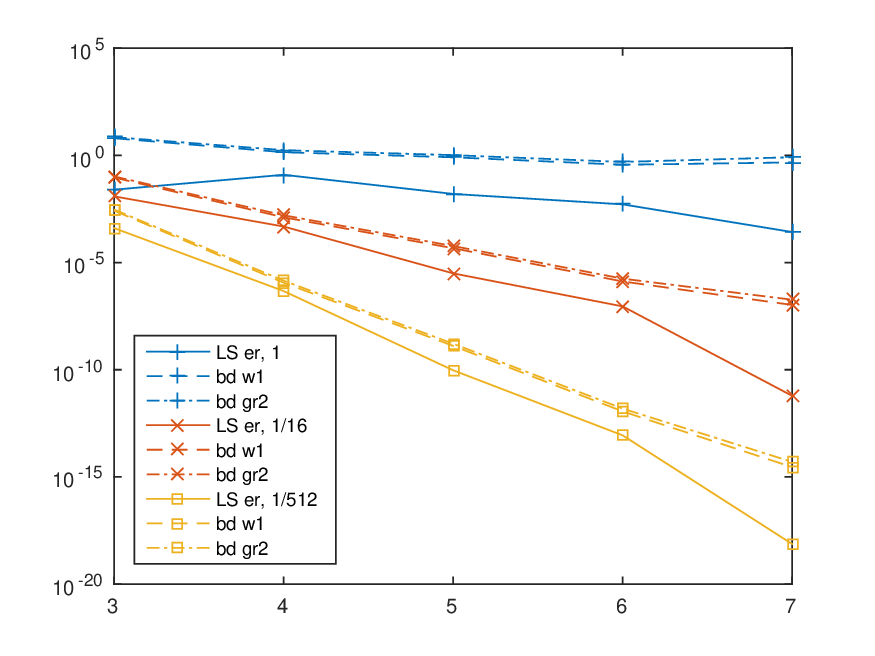}\label{random32f2LS}} 
\subfigure[$\dnorm_{1,q}$-minimal on $\X_2^h$]
{\includegraphics[width=.5\textwidth]{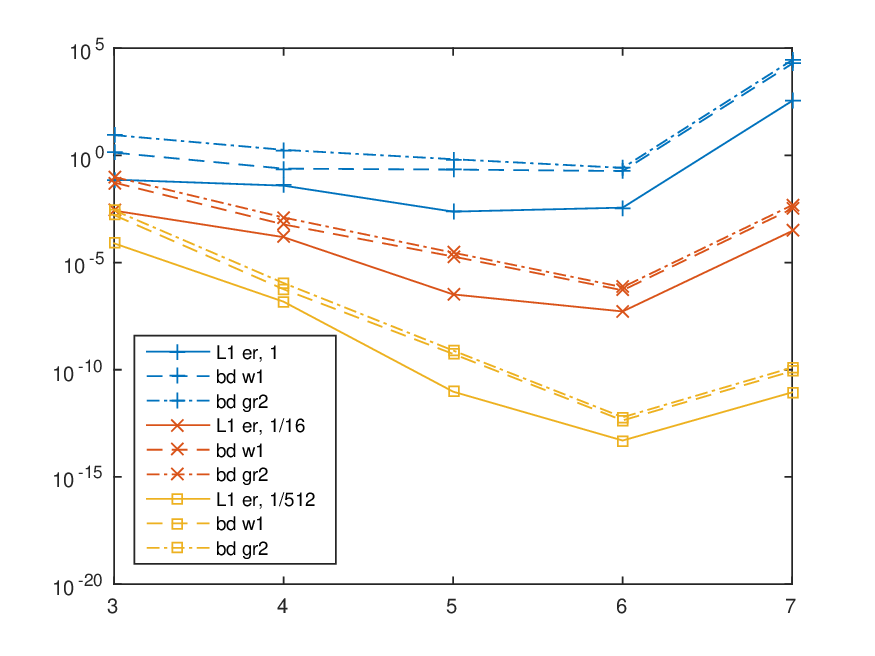}\label{she32f2L1}} 
\subfigure[$\dnorm_{2,q}$-minimal on $\X_2^h$]
{\includegraphics[width=.5\textwidth]{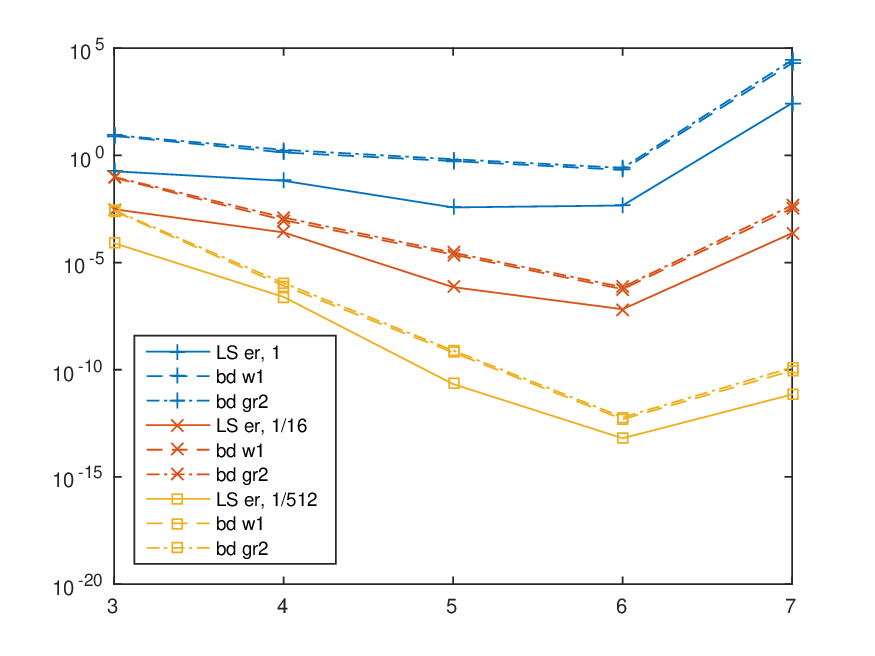}\label{she32f2LS}} 
\subfigure[$\dnorm_{1,q}$-minimal on $\X_3^h$]
{\includegraphics[width=.5\textwidth]{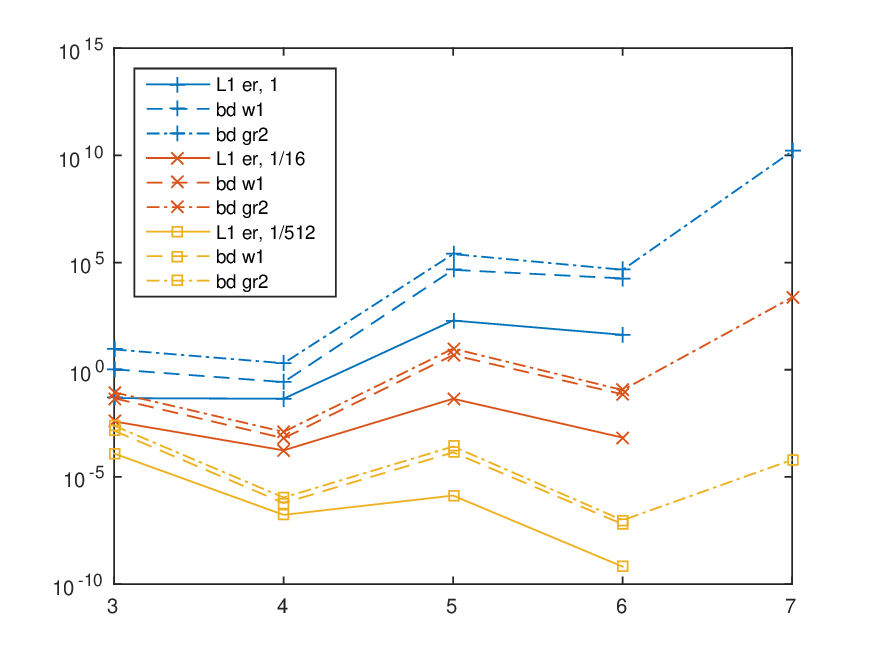}\label{3lines32f2L1}} 
\subfigure[$\dnorm_{2,q}$-minimal on $\X_3^h$]
{\includegraphics[width=.5\textwidth]{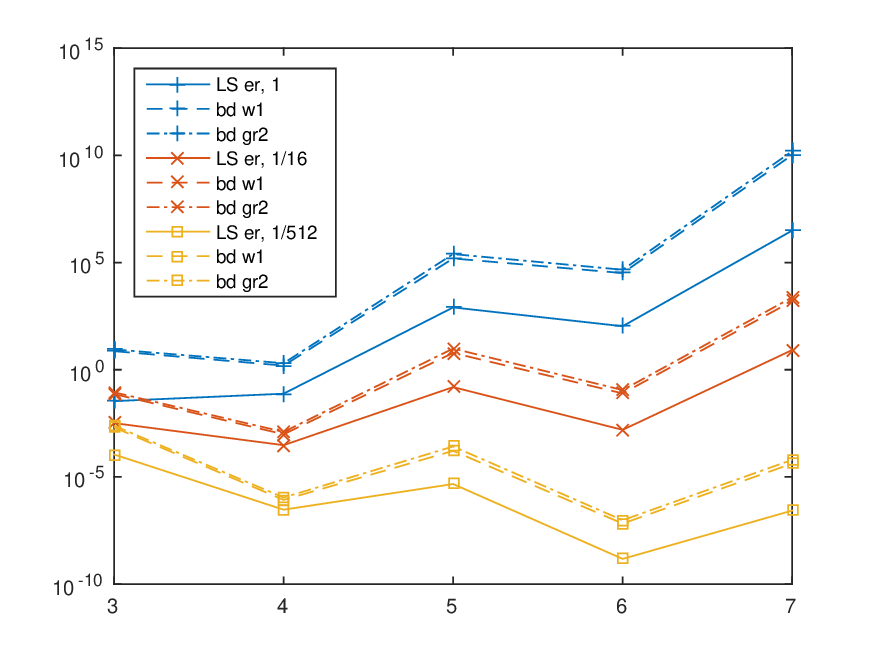}\label{3lines32f2LS}} 
\caption{Error of numerical differentiation of the Laplacian by  weighted
$\dnorm_{1,q}$-minimal formulas (left) and $\dnorm_{2,q}$-minimal formulas (right)
of exactness order $q$ for the test function $f_2$ using centers in $\X^h_i$, $i=1,2,3$, $h=1,1/16,1/512$, 
as functions of $q=3,\ldots,7$, together with error bounds.
{\tt L1 er, h}: the error of the  $\dnorm_{1,q}$-minimal formula of exactness order $q$;
{\tt LS er, h}: the error of the  $\dnorm_{2,q}$-minimal formula of exactness order $q$;
{\tt bd w1}: error bound \eqref{Cpeb}; 
{\tt bd g2}: error bound \eqref{lsbwoa}/\eqref{peb1s1} for both $\dnorm_{1,q}$ and
$\dnorm_{2,q}$-minimal formulas.
}\label{f2L1LS}
 \end{figure}

\subsection{Influence of the exponent $\mu$}
\label{expmu}

In the above experiments we used $\mu=q$ for formulas of exactness order $q$. In order to see how the choice of $\mu$ 
influences the results, 
we consider two new sets $\X_4$ and $\X_5$ shown in Figure~\ref{X45}.
The set $\X_4$ consists of the origin $(0,0)$ and 149 random points from the uniform distribution in $[-1,1]$. 
To obtain $\X_5$, we removed from $\X_4$ its 32 points closest to $(0,0)$, and
replaced them by the set $\X_3^h$ with $h=1/\sqrt{2}$. This makes the central part of $\X_5$ less suitable for a
high order approximation. 

\begin{figure}[htbp!]
\subfigure[$\X_4$]
{\includegraphics[width=.5\textwidth]{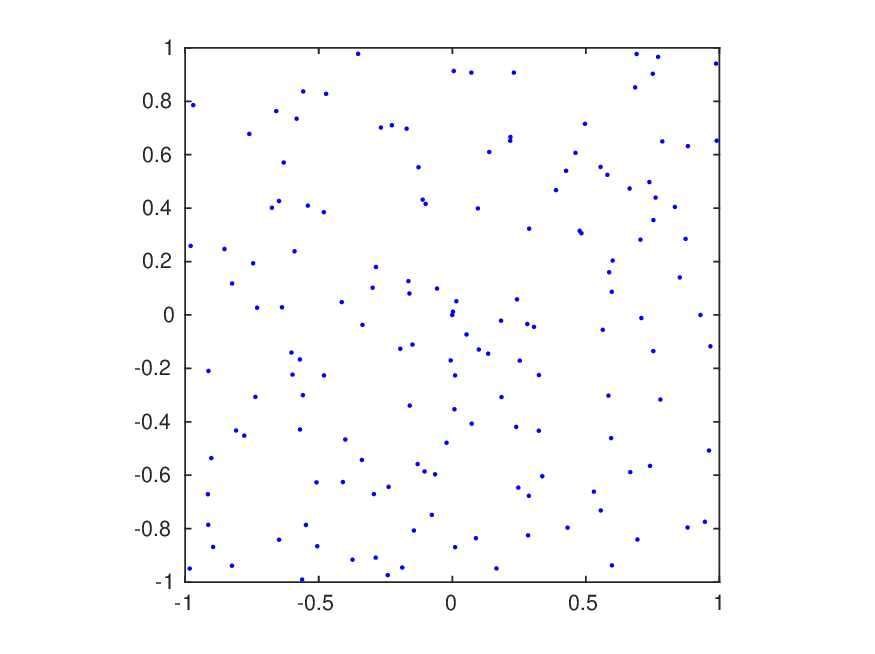}\label{points_random150}} 
\subfigure[$\X_5$]
{\includegraphics[width=.5\textwidth]{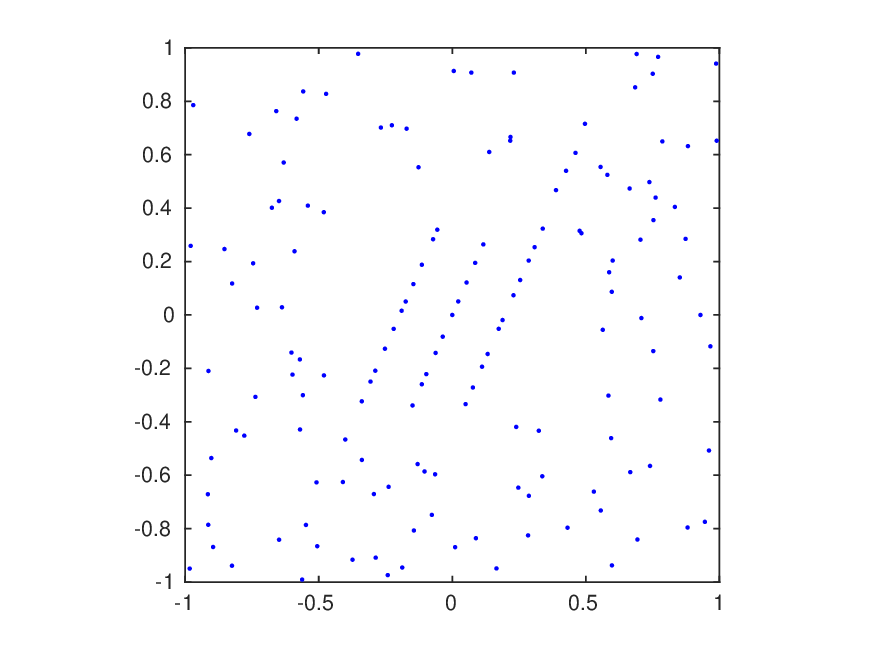}\label{points_random150with3lines}} 
\caption{Point sets  $\X_4$ (the origin and 149 random points in $[-1,1]$) and $\X_5$ obtained from $\X_4$ by replacing
its 32 points closest to $(0,0)$ by $\X_3^{1/\sqrt{2}}$.
}\label{X45}
 \end{figure}

We now compare $\dnorm_{1,\mu}$ and $\dnorm_{2,\mu}$-minimal formulas of a given
exactness order $q=7$ on the sets $\X_4$ and $\X_5$, with the  weight exponents $\mu$ varying between 0 and 15.
Figures~\ref{X45rH7}(ab) show the worst case error \eqref{optest} of these formulas on 
$H^8(\RR^2)$,  Figures~\ref{X45rH7}(cd) shows the value of the factor 
$\|\w\|_{1,q}=\sum_{j=1}^{N} |w_j|\|\x_j-\z\|_2^{q}$
in the error bound \eqref{Cpeb}, whereas Figures~\ref{X45rH7}(ef) depict the stability constant
$\|\w\|_1=\sum_{j=1}^{N} |w_j|$
responsible for the numerical stability of the respective differentiation formula. 
In addition, Figure~\ref{X45f2L1LS} presents the errors of the same formulas for test function $f_2$, together with the error
bounds  given by \eqref{Cpeb}, \eqref{peb1}, \eqref{peb2}, \eqref{peb3}, \eqref{lsb} and \eqref{peb123s1}.
Finally, Figure~\ref{X45stencils} shows which points have nonzero weights in the $\dnorm_{1,\mu}$-minimal formulas 
for the weight exponents $n=0,7,15$.

\begin{figure}[htbp!]
\subfigure[$\X_4$: error on $H^8(\RR^2)$]
{\includegraphics[width=.5\textwidth]{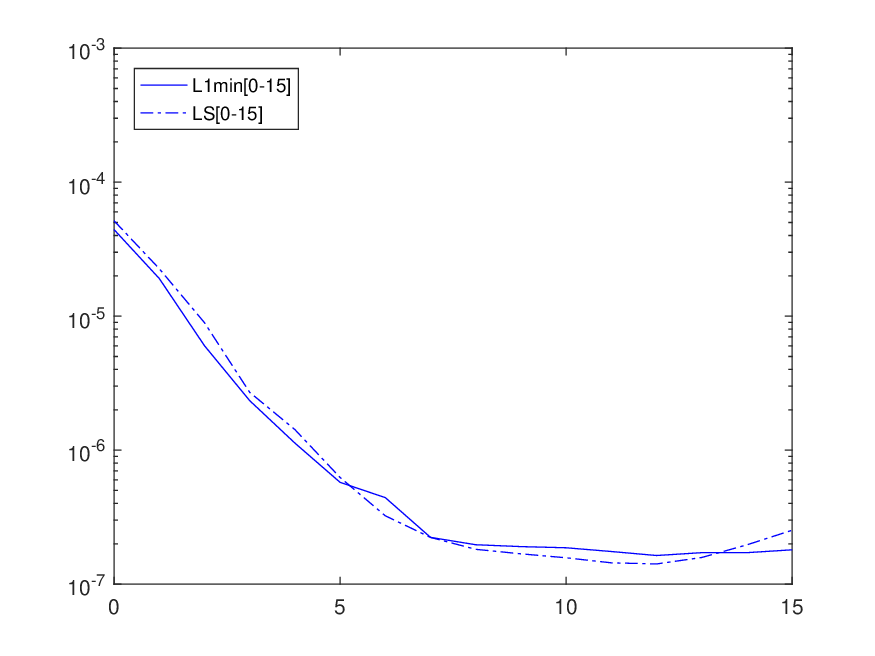}\label{random150orH8}} 
\subfigure[$\X_5$: error on $H^8(\RR^2)$]
{\includegraphics[width=.5\textwidth]{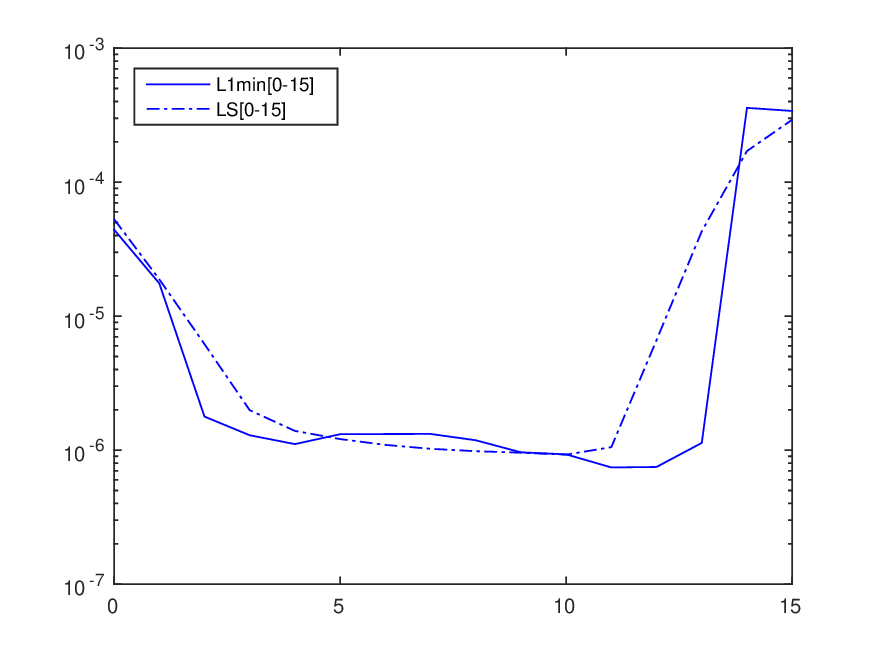}\label{random150with3lines_orH8}} 
\subfigure[$\X_4$: the value of $\|\w\|_{1,q}$]
{\includegraphics[width=.5\textwidth]{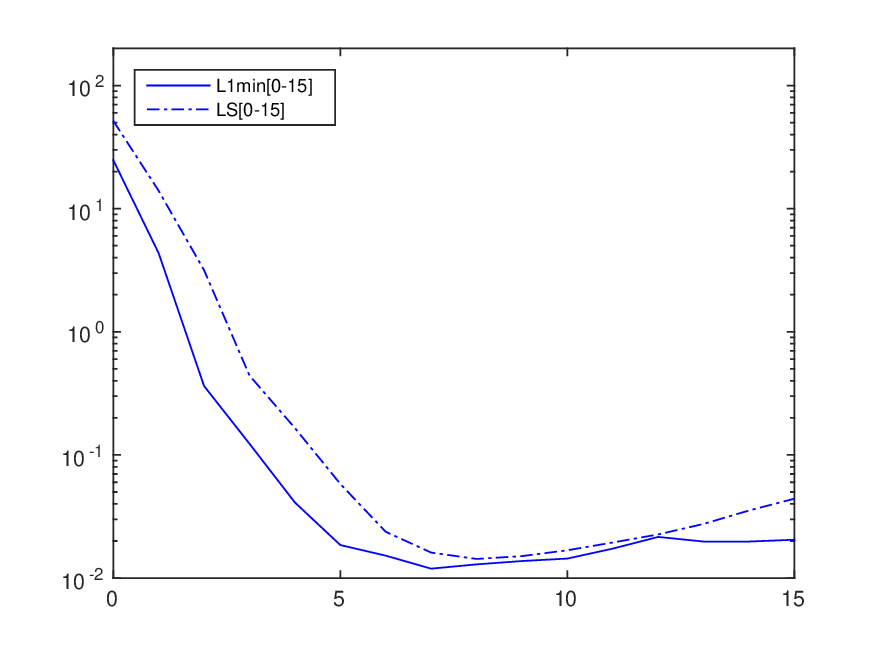}\label{random150const_q7}} 
\subfigure[$\X_5$: the value of $\|\w\|_{1,q}$]
{\includegraphics[width=.5\textwidth]{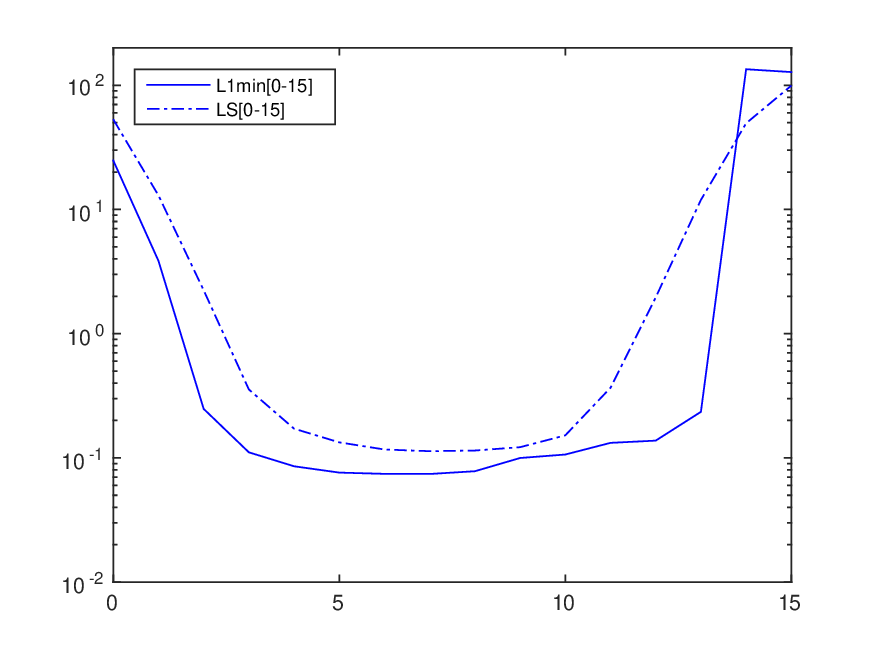}\label{random150with3lines_const_q7}} 
\subfigure[$\X_4$: stability constant $\|\w\|_1$]
{\includegraphics[width=.5\textwidth]{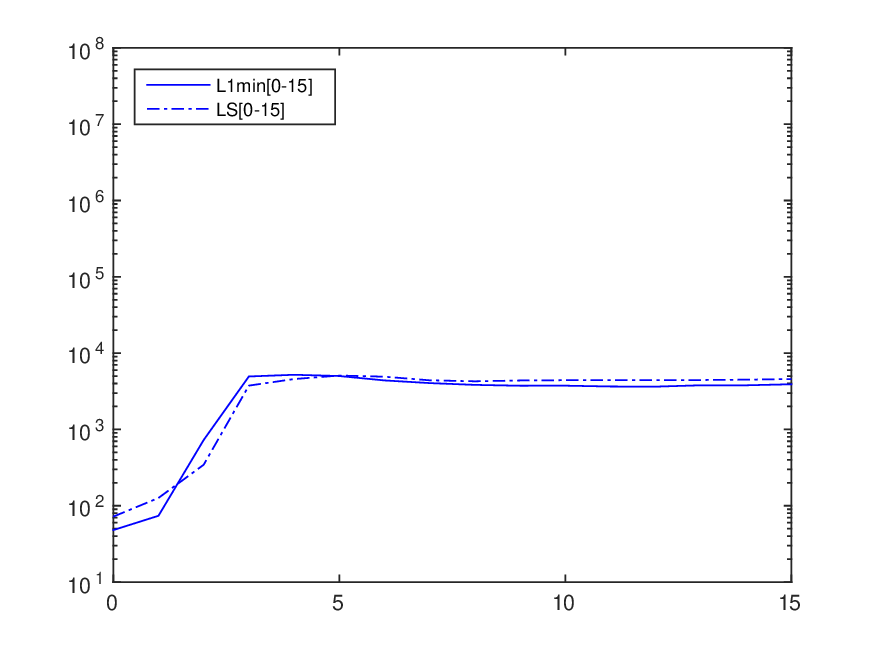}\label{random150stab_q7}} 
\subfigure[$\X_5$: stability constant $\|\w\|_1$]
{\includegraphics[width=.5\textwidth]{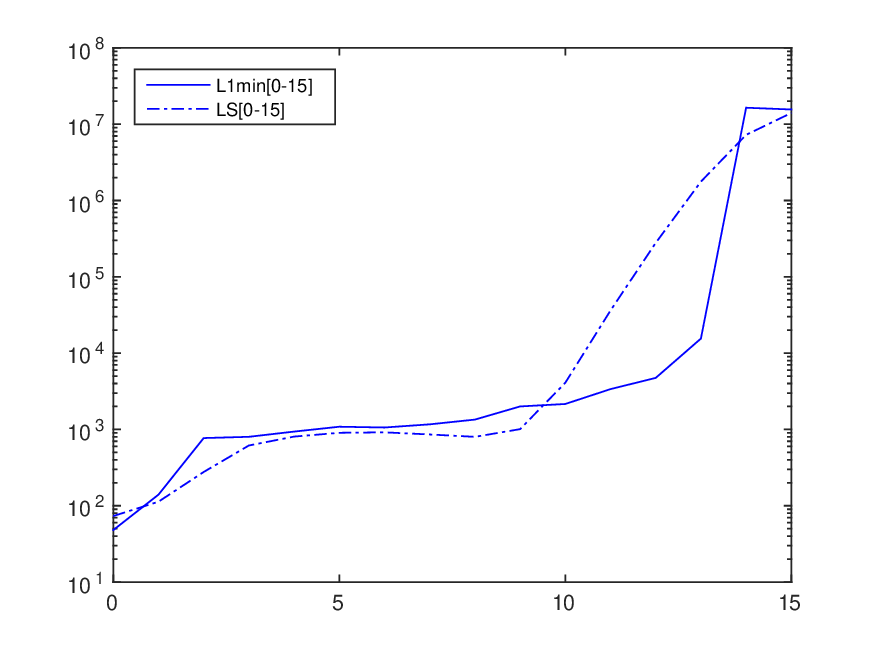}\label{random150with3lines_stab_q7}} 
\caption{Comparison of the error of $\dnorm_{1,\mu}$ and $\dnorm_{2,\mu}$  minimal formulas  of 
exactness order $q=7$ for the numerical differentiation of
the Laplacian at the origin  on the sets $\X_4$ and $\X_5$ of 
Figure~\ref{X45}, as function of $\mu=0,\ldots,15$.
}\label{X45rH7}
 \end{figure}

\begin{figure}[htbp!]
\subfigure[$\dnorm_{1,\mu}$ formulas on $\X_4$]
{\includegraphics[width=.5\textwidth]{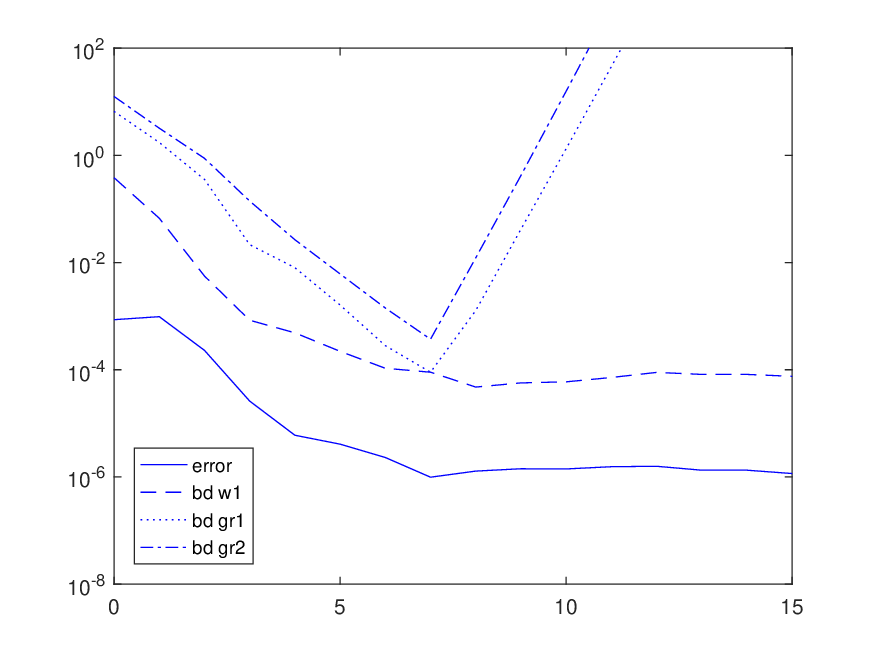}\label{random150f2L1}} 
\subfigure[$\dnorm_{1,\mu}$ formulas on $\X_5$]
{\includegraphics[width=.5\textwidth]{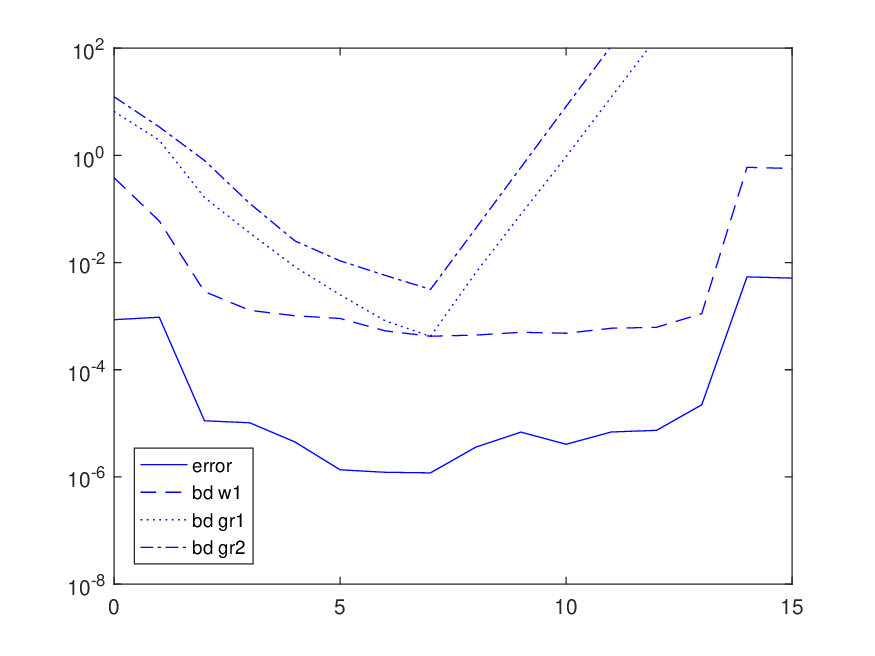}\label{random150with3lines_f2L1}} 
\subfigure[$\dnorm_{2,\mu}$ formulas on $\X_4$]
{\includegraphics[width=.5\textwidth]{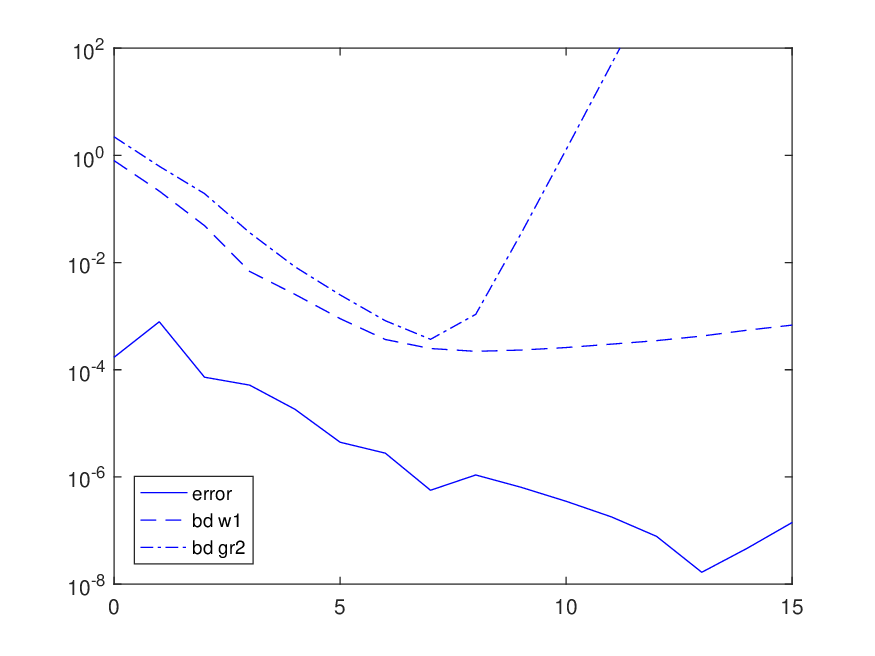}\label{random150f2LS}} 
\subfigure[$\dnorm_{2,\mu}$ formulas on $\X_5$]
{\includegraphics[width=.5\textwidth]{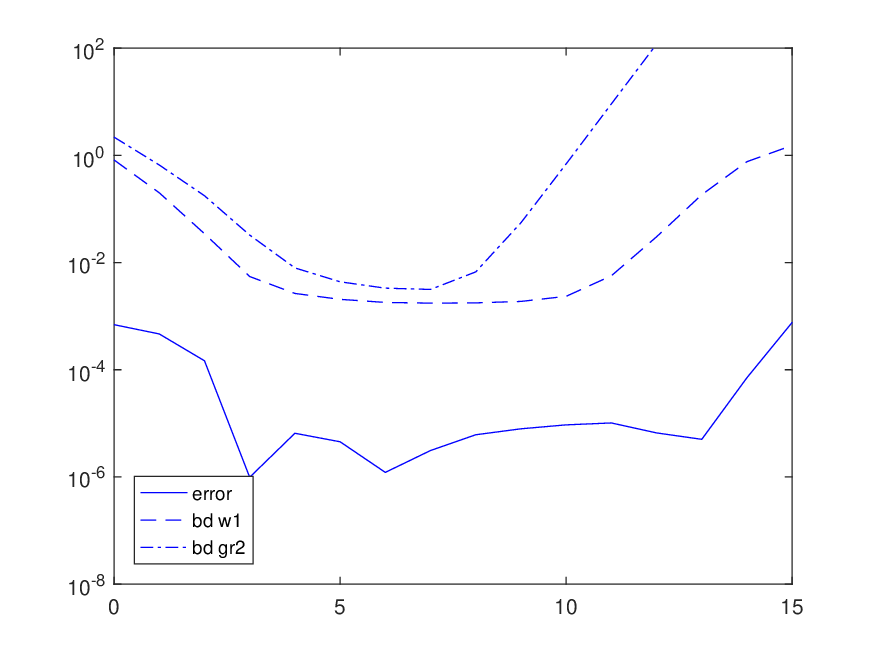}\label{random150with3lines_f2LS}} 
\caption{Error of $\dnorm_{1,\mu}$ and $\dnorm_{2,\mu}$  minimal formulas 
of exactness order $q=7$ for the numerical differentiation of
the Laplacian at the origin for test function $f_2$ on the centers in $\X_4$ and $\X_5$, together with various error bounds, 
as functions of $\mu=0,\ldots,15$.
{\tt bd w1}: error bound \eqref{Cpeb} with $\Omega$ chosen as the convex hull of $\z\cup\X$;
{\tt bd gr1}: error bound \eqref{peb1} if $\mu=q$, \eqref{peb2} if $\mu<q$ or \eqref{peb3} if $\mu>q$;
{\tt bd gr2}: error bound \eqref{peb123s1} for $\dnorm_{1,\mu}$-minimal formulas, or
\eqref{lsb} for $\dnorm_{2,\mu}$-minimal formulas.
}\label{X45f2L1LS}
 \end{figure}

\begin{figure}[htbp!]
\subfigure[$\X_4$: $\mu=0$]
{\includegraphics[width=.5\textwidth]{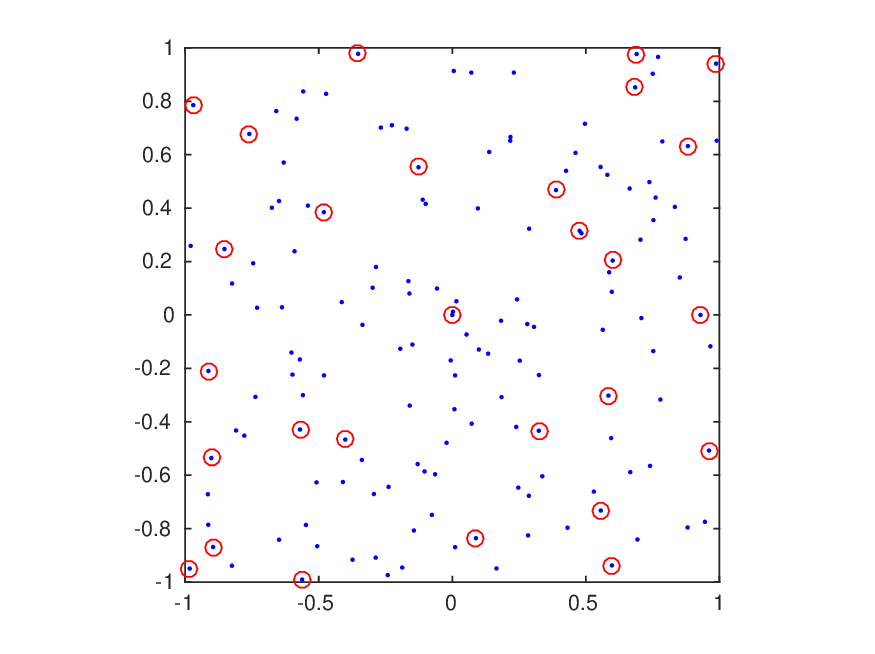}\label{points_random150p0}} 
\subfigure[$\X_5$: $\mu=0$]
{\includegraphics[width=.5\textwidth]{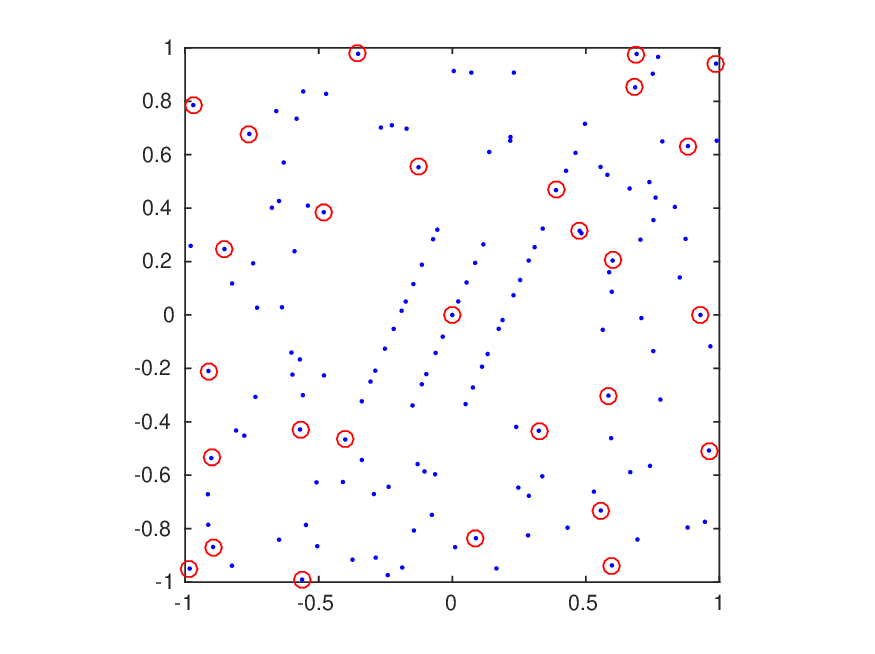}\label{points_random150with3lines_p0}} 
\subfigure[$\X_4$: $\mu=7$]
{\includegraphics[width=.5\textwidth]{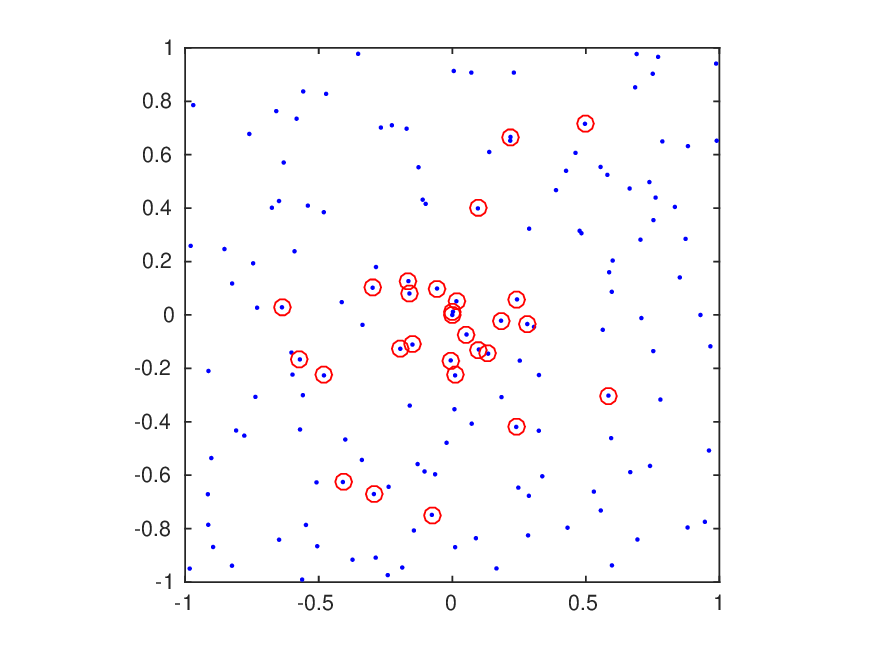}\label{points_random150p7}} 
\subfigure[$\X_5$: $\mu=7$]
{\includegraphics[width=.5\textwidth]{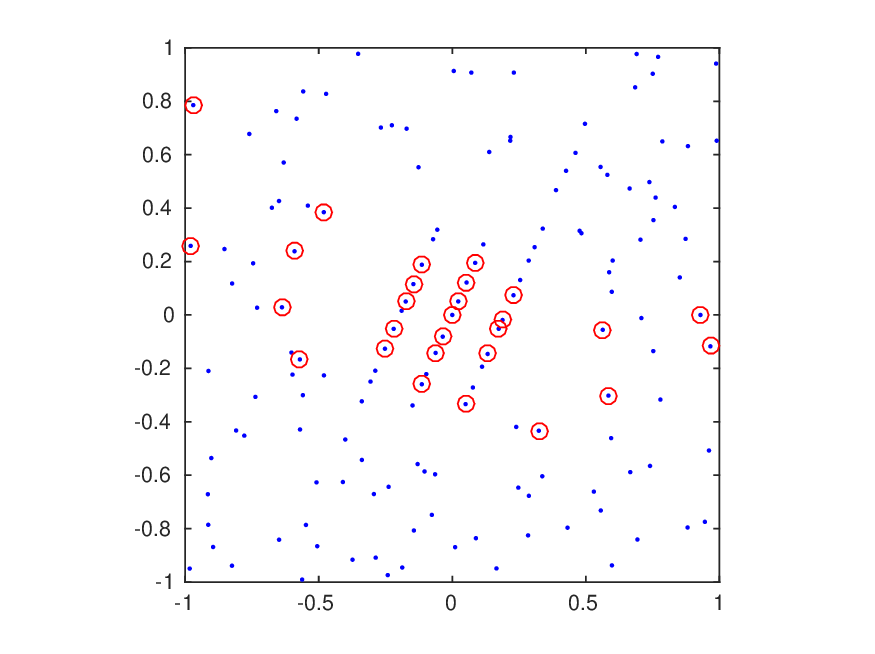}\label{points_random150with3lines_p7}} 
\subfigure[$\X_4$: $\mu=15$]
{\includegraphics[width=.5\textwidth]{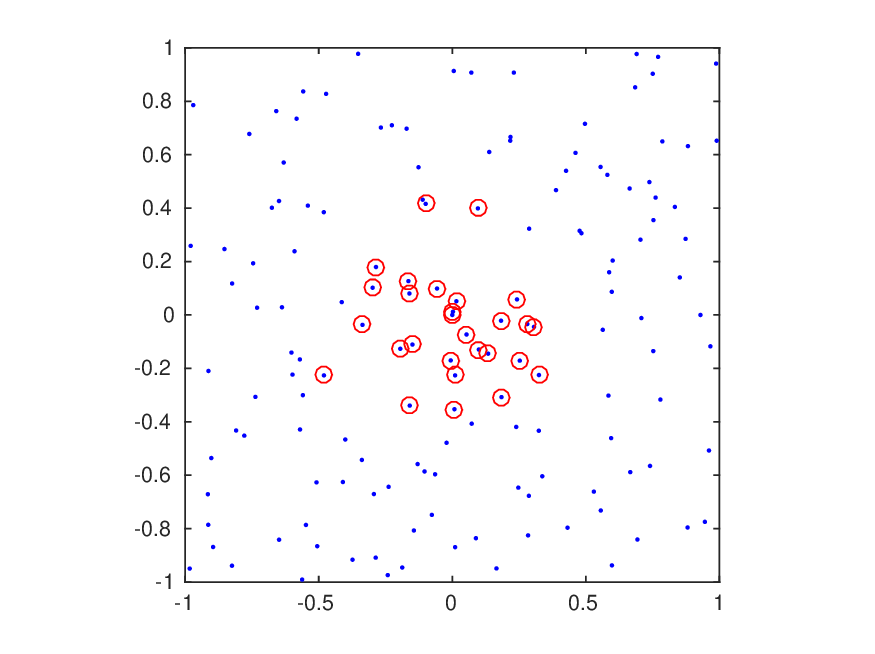}\label{points_random150p15}} 
\subfigure[$\X_5$: $\mu=15$]
{\includegraphics[width=.5\textwidth]{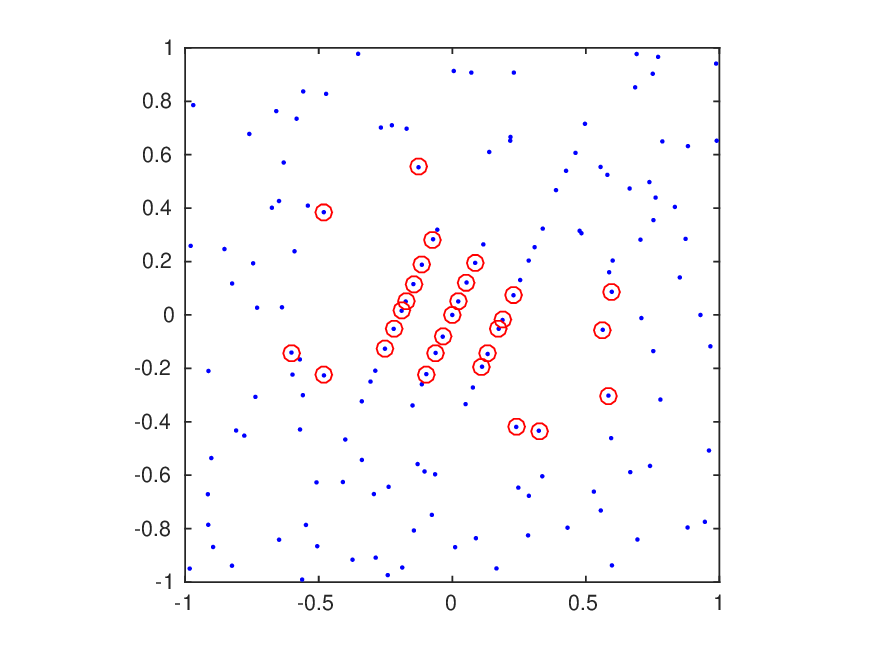}\label{points_random150with3lines_p15}} 
\caption{The 28 points of $\dnorm_{1,\mu}$-minimal formulas for $\X_4$ and $\X_5$ of exactness order $q=7$ with weight exponents 
$\mu=0,7,15$.
}\label{X45stencils}
 \end{figure}

We list some observations from these experiments:
\begin{enumerate}

\item The errors of $\ell_1$-minimal and least squares formulas behave quite similar.

\item For the random set  $\X_4$ the error generally decreases with the exponent $\mu$, see 
Figures~\ref{X45rH7}(a) and \ref{X45f2L1LS}(ac). This can be explained by the fact that a higher
exponent $\mu$ penalizes more distant points and thus forces the algorithm to choose points located 
closer to $\z$, compare Figures~\ref{X45stencils}(ace). 
The improvement of the error is most significant when $\mu$ increases from $\mu=0$ to $\mu\approx q$. 

\item For the set  $\X_5$ with a difficult configuration in the central part the errors first improve when
$\mu$ increases but start increasing when $\mu$ gets a little higher than $q$, see Figures~\ref{X45rH7}(b)
and \ref{X45f2L1LS}(bd). 
Thus, the high exponents force the algorithm to choose too many points in the 
problematic central area of  $\X_5$, see Figure~\ref{X45stencils}(f).

\item For both sets $\X_4$ and $\X_5$ the choice $\mu=q$ gives almost optimal error. It is in particular 
remarkable to see in Figure~\ref{X45stencils}(d) that the algorithm chooses more distant points of $\X_5$
(not belonging to $\X_3^{1/\sqrt{2}}$) in the direction orthogonal to the parallel lines used to generate $\X_3$,
which is heuristically a good way to compensate for the deficiency of $\X_3^{1/\sqrt{2}}$ which has too many points
in rows going  in the direction of these lines and does not provide enough information about the behavior of the test 
function in the orthogonal direction.

\item Figures~\ref{X45rH7}(cd), in comparison to \ref{X45rH7}(ab) show that the quantity $\|\w\|_{1,q}$ predicts  
very well which formulas are more accurate. The curves marked {\tt bd w1} in Figure~\ref{X45f2L1LS} confirm this 
observation by comparing the error bound \eqref{Cpeb} based on $\|\w\|_{1,q}$ to the actual error of the Laplacian for test function $f_2$.

\item A comparison of Figures~\ref{X45rH7}(ab) with Figures~\ref{X45rH7}(ef) demonstrates a trade-off 
between higher accuracy and stability: a smaller error is normally achieved at 
the expense of a larger stability constant $\|\w\|_1$. Using $\mu=q$ seems to give a good compromise in this respect, as
a significantly better stability is only obtained with small values of $\mu$ by picking more distant centers as in 
Figures~\ref{X45stencils}(ab). An excessively large $\mu$ leads on $\X_5$ to
formulas that are inaccurate
and unstable at 
the same time.

\item The curves marked {\tt bd gr1} and {\tt bd gr2} in Figure~\ref{X45f2L1LS} show that the error bounds \eqref{peb1}/\eqref{peb2}/\eqref{peb3}, 
\eqref{lsb} and \eqref{peb123s1} of Sections~\ref{ell1} and \ref{ell2} are useful if $\mu\le q$. In the case $\mu> q$, when negative powers of 
$\|\x_j-\z\|_2$ are present in the estimates, they seem too coarse.

\end{enumerate}

%\newpage

\red{
\section{Possible Applications}
Apart from providing consistency analysis of the polynomial generalized finite difference methods, the results of this paper may be useful for the actual design of such
methods in a variety of ways. The following two points seem particularly promising.
\begin{itemize}
\item Our error bounds and numerical experiments suggest that particular weights are preferable for the numerical differentiation 
with a given polynomial exactness order $q$. Indeed, if  $f$ is sufficiently smooth to belong to the Sobolev space $W^q_\infty$ in a domain containing the
convex hull of $\{\z\}\cup\X$, then the $\dnorm_{1,q}$-minimal formula gives the best possible bound \eqref{peb1a} among all formulas covered by
Theorem~\ref{gft}, whereas the  $\dnorm_{2,q}$-minimal formula possesses an error bound \eqref{lsbwoa} that in view of \eqref{rho12} is worse than \eqref{peb1a} at most 
by the factor $\sqrt{N}$. Numerical examples in Section~\ref{expmu} also support the choice $\mu=q$ among $\dnorm_{1,\mu}$-minimal and  $\dnorm_{2,\mu}$-minimal
formulas.
\item Growth functions $\rho_{q,D}(\z,\X,1,q)$ and $\rho_{q,D}(\z,\X,2,q)$ that appear in the estimates \eqref{peb1a}, \eqref{peb1s1}, \eqref{lsbwoa} can be computed as
$\rho_{q,D}(\z,\X,1,q)=\|\w^{1,q}\|_{1,q}$ and $\rho_{q,D}(\z,\X,2,q)=\|\w^{2,q}\|_{2,q}$, respectively, and used to assess the accuracy of numerical differentiation
on a given set $\X$, which may be used to improve the selection of sets of influence in the generalized finite difference methods. The ability to select good sets of
influence is essential for the design of competitive adaptive algorithms \cite{ODP17}. Moreover, it may be possible to generate 
discretization nodes for a given domain $\Omega$ in such a way that the local growth functions for a given degree are small on these nodes, which guarantees small
consistency error. 
\end{itemize}
The same considerations apply to any other area where accurate numerical differentiation is of interest, e.g.\ scattered data fitting or gradient free optimization.
}

\bibliographystyle{abbrv} %
\bibliography{RSbib,meshless}

\end{document}